% !TeX spellcheck = en_GB
\documentclass[11pt]{amsart}
\usepackage{amsmath,amsfonts,amsthm,amsopn,amssymb,latexsym,soul}
\usepackage{cite}
\usepackage{color,enumitem,graphicx,cancel}
\usepackage[normalem]{ulem}
%%%% Following 3 lines changes by Wolfgang
%\usepackage[colorlinks=true,urlcolor=blue,
%citecolor=red,linkcolor=blue,linktocpage,pdfpagelabels,
%bookmarksnumbered,bookmarksopen]{hyperref}
\usepackage[colorlinks=false,urlcolor=blue,
citecolor=red,linkcolor=blue,linktocpage,pdfpagelabels,
bookmarksnumbered,bookmarksopen]{hyperref}
\usepackage[english]{babel}

\usepackage[left=2.61cm,right=2.61cm,top=2.72cm,bottom=2.72cm]{geometry}
\usepackage[hyperpageref]{backref}

\usepackage[colorinlistoftodos]{todonotes}

\makeatletter
\providecommand\@dotsep{5}
\def\listtodoname{List of Todos}
\def\listoftodos{\@starttoc{tdo}\listtodoname}
\makeatother

\numberwithin{equation}{section}
\newtheorem{theorem}{Theorem}[section]
\newtheorem{lemma}[theorem]{Lemma}

\newtheorem{definition}[theorem]{Definition}
\newtheorem{proposition}[theorem]{Proposition}
\newtheorem{question}[theorem]{Question}
\newtheorem{remark}[theorem]{Remark}
\newtheorem{corollary}[theorem]{Corollary}
\newcommand{\eps}{\varepsilon}

\newcommand{\R}{\mathbb{R}}

\newcommand{\RN}{{\mathbb{R}^N}}

\newcommand{\de}{\partial}

 \DeclareMathOperator{\dv}{div}
\DeclareMathOperator{\dist}{dist} 
\DeclareMathOperator{\supp}{supp} 
\DeclareMathOperator{\dom}{dom}
\DeclareMathOperator{\range}{range}

\DeclareMathOperator{\const}{const.}
\renewcommand{\le}{\leqslant}
\renewcommand{\ge}{\geqslant}
\renewcommand{\a }{\alpha }

\renewcommand{\d }{\delta }
\newcommand{\vfi}{\varphi}

\renewcommand{\l }{\lambda}
\newcommand{\n }{\nabla }
\newcommand{\s }{\sigma }
\renewcommand{\t}{\theta}
\renewcommand{\O}{\Omega}

\newcommand{\G}{\Gamma}

\newcommand{\ci}{\mathcal{I}}
\newcommand{\cj}{\mathcal{J}}

\newcommand{\ch}{\mathcal{H}}

\newcommand{\E}{\mathcal{E}}
\newcommand{\co}{\mathcal{O}}
\newcommand{\I}{\mathcal{I}}

\newcommand{\X}{\mathcal{X}}
\newcommand{\N}{\mathbb{N}}

\newcommand{\irn }{\int_{\RN}}

\def\bbm[#1]{\mbo\X{\boldmath $#1$}}
\newcommand{\beq }{\begin{equation}}
\newcommand{\eeq }{\end{equation}}

\renewcommand{\de}{\partial}

\newcommand{\pra}{\phi_{\rho^\ast}}

\newcommand{\pran}{\phi_{\rho^{\ast,n}}^n}

\newcommand{\ran}{\rho^{\ast,n}}
\newcommand{\br}{\bar \rho}
\newcommand{\pbr}{\phi_{\bar \rho}}

\title[Equilibrium measures and equilibrium potentials in the Born-Infeld model ]{
Equilibrium measures and equilibrium potentials\\ in the Born-Infeld model}

\author[D. Bonheure]{Denis Bonheure}
\address[D. Bonheure]{\newline\indent
D\'epartement de Math\'ematique, Universit\'e libre de Bruxelles, 
\newline\indent 
CP 214, Boulevard du Triomphe, B-1050 Bruxelles, Belgium}
\email{\href{mailto:denis.bonheure@ulb.ac.be}{denis.bonheure@ulb.ac.be}}

\author[P. d'Avenia]{Pietro d'Avenia}
\author[A. Pomponio]{Alessio Pomponio}
\address[P. d'Avenia and A. Pomponio]{\newline\indent
Dipartimento di Meccanica, Matematica e Management,
Politecnico di Bari
\newline\indent
Via Orabona 4,  70125  Bari, Italy}
\email{\href{mailto:pietro.davenia@poliba.it}{pietro.davenia@poliba.it}}
\email[Corresponding Author]{\href{mailto:alessio.pomponio@poliba.it}{alessio.pomponio@poliba.it}}

\author[W. Reichel]{Wolfgang Reichel}
\address[W. Reichel]{\newline\indent
Institut fuer Analysis, Karlsruher Institut fuer Technologie 
\newline\indent
Englerstrasse 2,
76131 Karlsruhe, Germany}
\email{\href{mailto:wolfgang.reichel@kit.edu}{wolfgang.reichel@kit.edu}}

%\thanks{D. Bonheure is supported by 
%MIS F.4508.14 (FNRS), PDR T.1110.14F (FNRS) 
%\& ARC AUWB-2012-12/17-ULB1- IAPAS. 
%P. d'Avenia and A. Pomponio are partially supported by GNAMPA Project ``Analisi variazionale di modelli fisici non lineari''. W. Reichel gratefully acknowledges financial support by the Deutsche Forschungs\-gemeinschaft (DFG) through CRC 1173.
%}
\subjclass[2010]{35J93,35Q60,78A30}
%\date{\today}
\keywords{Born-Infeld model, nonlinear electromagnetism, equilibrium measure, equilibrium potential}

\begin{document}

\begin{abstract}
In this paper, we consider the electrostatic Born-Infeld model
\begin{equation}
\label{eqabs}
\tag{$\mathcal{BI}$}
\left\{
\begin{array}{rcll}
-\operatorname{div}\left(\displaystyle\frac{\nabla \phi}{\sqrt{1-|\nabla \phi|^2}}\right)&=& \rho & \hbox{in }\mathbb{R}^N,
\\[6mm]
\displaystyle\lim_{|x|\to \infty}\phi(x)&=& 0
\end{array}
\right.
\end{equation} 
where $\rho$ is a charge distribution on the boundary of a bounded domain $\Omega\subset \mathbb{R}^N${, with $N\ge3$}. We are interested in its {\em equilibrium measures}, i.e. charge distributions which minimize the electrostatic energy of the corresponding potential among all possible distributions  with fixed total charge.  We prove existence of  equilibrium measures and we show that the corresponding {\em equilibrium potential} is unique and constant in $\overline \Omega$. Furthermore, for smooth domains, we  obtain the uniqueness of the equilibrium measure, we give its precise expression, and we verify that the equilibrium potential solves \eqref{eqabs}. Finally we characterize balls in $\mathbb{R}^N$ as the unique sets among all bounded $C^{2,\alpha}$-domains $\Omega$ for which the equilibrium distribution is a constant multiple of the surface measure on $\partial\Omega$. The same results are obtained also for Taylor approximations of the electrostatic energy.

\medbreak 

\noindent{\sc{R\'esum\'e}.}
Dans cet article, nous consid\'erons le mod\`ele \'electrostatique de Born-Infeld
\begin{equation}
\label{eqabs}
\tag{$\mathcal{BI}$}
\left\{
\begin{array}{rcll}
-\operatorname{div}\left(\displaystyle\frac{\nabla \phi}{\sqrt{1-|\nabla \phi|^2}}\right)&=& \rho & \hbox{in }\mathbb{R}^N,
\\[6mm]
\displaystyle\lim_{|x|\to \infty}\phi(x)&=& 0
\end{array}
\right.
\end{equation} 
o\`u $\rho$  est une distribution de charges sur le bord d'un domaine born\'e $\Omega\subset \mathbb{R}^N${, $N\ge3$}. Nous nous int\'eressons aux mesures d'\'equilibre, c'est-\`a-dire aux distributions de charges qui minimisent l'\'energie \'electrostatique du potentiel correspondant, parmi toutes les distributions possibles dont la charge totale est fix\'ee.  Nous prouvons l'existence de mesures d'\'equilibre et nous montrons que le potentiel correspondant \`a l'\'equilibre est unique et constant dans $\overline \Omega$. De plus, pour les domaines r\'eguliers, nous obtenons l'unicit\'e de la mesure d'\'equilibre, nous donnons son expression pr\'ecise et nous v\'erifions que le potentiel d'\'equilibre est une solution du probl\`eme aux limites \eqref{eqabs}. Finalement, nous caract\'erisons les boules dans $\mathbb{R}^N$ comme \'etant les uniques domaines born\'es $\Omega$ de classe $C^{2,\alpha}$ pour lesquels la distribution d'\'equilibre est un multiple de la mesure de surface sur  $\partial\Omega$. Les m\^emes r\'esultats sont obtenus aussi pour des approximations de Taylor de l'\'energie \'electrostatique.
\end{abstract}
\maketitle

%\listoftodos

\begin{center}
	\begin{minipage}{11cm}
		\tableofcontents
	\end{minipage}
\end{center}

\section{Introduction and main results}

In this paper we consider the electrostatic Born-Infeld model. Let $\rho$ be a charge distribution on the boundary of a bounded domain $\Omega\subset \R^N${, with $N\ge3$} ($N=3$ being the physically relevant case), i.e., we consider $\rho$ as an element of the set $P(\partial\Omega)$ of all positive Borel measures on $\R^N$ supported on $\partial\Omega$ with $\oint_{\partial\Omega} d\rho=1$.  We consider the \emph{electrostatic potential} generated by $\rho$ as the unique minimizer $\phi_\rho$ of the Born-Infeld electrostatic action 
$$
\mathcal{I}(\phi) = \int_{\R^N}  \left(b^2-b\sqrt{b^2-|\n \phi|^2}\right) \,dx - \oint_{\partial\Omega} \phi d\rho
$$
where $\phi$ runs through the set $\mathcal{X}$ of all Lipschitz functions with Lipschitz-constant less or equal $b>0$ and $\int_{\R^N} |\nabla \phi|^2\,dx < \infty$. One of the main results of \cite{BDP} shows that $\mathcal{I}$ has a unique minimizer in $\mathcal{X}$; further uniqueness questions were addressed in \cite{K2}. For each electrostatic potential $\phi_\rho$ we can consider the Born-Infeld electrostatic energy $\mathcal{E}$ (details of the definition are given in Section~\ref{equi_measures})
$$
\mathcal{E}(\phi_\rho) := - \mathcal{I}(\phi_\rho).
$$
Among all possible charge distributions $\rho\in P(\partial\Omega)$ one can search for those distributions, which create least-energy potentials. Such a distribution $\rho^\ast\in P(\partial\Omega)$ is called \emph{equilibrium distribution} and is defined as follows:
$$
\mathcal{E}(\phi_{\rho^\ast}) = \min_{\rho\in P(\partial\Omega)} \mathcal{E}(\phi_\rho).
$$
The corresponding minimizer $\phi_{\rho^\ast}$ is called an \emph{equilibrium potential}. The main purpose of this paper is the study of the existence and the properties of $\rho^\ast$. 

\medskip

Our first set of results is as follows. We prove that 
\begin{itemize}
\item[(i)] equilibrium distributions exist, cf. Theorem~\ref{th:exis};
\item[(ii)] the equilibrium potential $\phi_{\rho^\ast}$ is unique and takes a constant value $\lambda^\ast$ in $\overline{\Omega}$, cf. Theorem~\ref{main} and Corollary~\ref{cor:unique_potential}.
\end{itemize}
An open question in the electrostatic Born-Infeld theory is the following: is it true or not that electrostatic potentials, i.e., minimizers of $\mathcal{I}$ on $\X$, are weak solutions of the electrostatic Born-Infeld equations
\begin{equation*}
\leqno{(\mathcal{BI})} \qquad \qquad\qquad \qquad
\left\{
\begin{array}{rcll}
-\operatorname{div}\left(\displaystyle\frac{ b\nabla \phi}{\sqrt{b^2-|\nabla \phi|^2}}\right)&=& \rho & \hbox{in }\mathbb{R}^N,
\\[6mm]
\displaystyle\lim_{|x|\to \infty}\phi(x)&=& 0.
\end{array}
\right.
\end{equation*}
The answer in general seems not to be known. It is however true when $\Omega$ is a ball and $\rho$ a constant multiple of the surface-measure on $\partial\Omega$, cf. \cite{BDP}. In the case where $\partial\Omega\in C^{2,\alpha}$ we can say more on the equilibrium potential. This is our second set of results. We prove that
\begin{itemize}
\item[(iii)] $\phi_{\rho^\ast}$ is a weak solution of \eqref{eq:BI} which is smooth outside $\partial\Omega$, cf. Theorem~\ref{main2};
\item[(iv)] the (unique) value $\lambda^\ast$ has a unique characterization given in Proposition~\ref{pr:rhol}: in particular 
$$
d\rho^\ast= - b\frac{\partial_\nu \phi_{\rho^\ast}}{\sqrt{b^2-|\nabla \phi_{\rho^\ast}|^2}}d\sigma,
$$
where $\partial_\nu \phi_{\rho^\ast}$ denotes the outer normal derivative of $\phi_{\rho^\ast}$ w.r.t. $\Omega$;
\item[(v)] the equilibrium distribution $\rho^\ast$ is unique, cf.~Corollary~\ref{cor:unique_measure}.
\end{itemize}
As a third set of results we consider in Section~\ref{sec:approx} a regularized Born-Infeld model, where the action is given by 
\begin{equation} \label{defi_alpha_h}
\mathcal{I}^n(\phi) = \sum_{h=1}^{n} \frac{\alpha_{h}}{2h} \|\nabla\phi\|_{2h}^{2h}  - \oint_{\partial\Omega} \phi d\rho, \qquad \alpha_1 = 1, \alpha_h=\frac{1\cdot 3 \cdot 5 \cdots (2h-3)}{2^{h-1} (h-1)!} b^{2(1-h)}. 
\end{equation}
This arises from the Taylor expansion formula $b^2-b\sqrt{b^2-x^2} = \sum_{h=1}^\infty \frac{\alpha_h}{2h}x^{2h}$ which converges for $x\in [-b,b]$. As before the corresponding electrostatic energy is given by $\mathcal{E}^n(\phi):= - \mathcal{I}^n(\phi)$.
One can define the \emph{electrostatic potential} $\phi^n_\rho$, the  \emph{equilibrium distribution} $\rho^{\ast,n}$ and the \emph{equilibrium potential} $\phi^n_{\rho^{\ast,n}}$ within the approximated model as before based on the notion of action and energy.
Our  results for this approximated model are as follows:
\begin{itemize}
\item[(vi)] there exist equilibrium distributions $\rho^{\ast,n}$ and the associated equilibrium potential $\phi^n_{\rho^{\ast,n}}$ is unique and constant on $\overline{\Omega}$; if, moreover, $\partial\Omega$ is $C^{1,\alpha}$ then $\rho^{\ast,n}$ is unique, cf. Theorem~\ref{th:exis-n}; 
\item[(vii)] the weak limit $\bar\rho$ of $\rho^{\ast,n}$ as $n\to \infty$ is an equilibrium distribution for the   Born-Infeld electrostatic model. In case $\partial\Omega\in C^{2,\alpha}$ it coincides with $\rho^\ast$, i.e., the unique equilibrium distribution of the   Born-Infeld model, cf. Proposition~\ref{prop:equilibrium-bar}.
\end{itemize}
In the final Section~\ref{sec:ball} we characterize balls in $\R^N$ in the following way:
\begin{itemize}
\item[(viii)] for the Born-Infeld model, the ball is the unique set among all bounded $C^{2,\alpha}$-domains $\Omega$ for which the equilibrium distribution $\rho^\ast$ is a constant multiple of the surface measure $d\sigma$ on $\partial\Omega$;
\item[(ix)] the same holds for the approximated model if $\rho^{\ast,n}$ is a constant multiple of $d\sigma$ on $\partial\Omega$. 
\end{itemize}
{ A regularity result for the electrostatic Born-Infeld equation can be derived from the milestone paper of Bartnik and Simon \cite{BS}. The result was extended to more general data by Bonheure and Iacopetti \cite{Bon-Iac2} but a sharp result is not known. When $\rho$ is a finite sum of point measures, we mention Kiessling's work \cite{K,K-corr}. The electrostatic Born-Infeld problem has many similarities with the mean curvature problem for graphs in Minkowski space. A variational approach to this geometric problem is given in \cite{bereanu_jebelan_mawhin}. Essential properties of the constant mean curvature problem were already considered in fundamental work by Calabi \cite{calabi}, Cheng and Yau \cite{cheng_yau}, Treibergs \cite{T82}, Bartnik and Simon \cite{BS}, Gerhardt \cite{Gerhardt}, Ecker \cite{E}, Klyachin and Miklyukov \cite{KM}, and Carley and Kiessling \cite{CK}. We refer to \cite{Lopez2014} for more insight on the geometry of Lorentz-Minkowski spaces and we also mention the recent contribution \cite{Bon-Iac1} concerning spacelike radial graphs. 

To the best of our knowledge results on the Born-Infeld electrodynamic theory are relatively sparse in the mathematical literature.  Well-posedness and stability issues are addressed in \cite{speck}.  In \cite{schroedinger} the interaction of two circularly polarized waves was shown to fulfill the Born-Infeld model in vacuum. In \cite{yang} one can find a discussion of the electro- and magnetostatic cases without sources. Special explicit solutions of the dynamical Born-Infeld model, e.g., propagating and counter-propagating plane waves, as well as connections to minimal surfaces can be found in \cite{malory}.  We also mention \cite{brenier,serre} in the context of conservation laws.

Finally we point out that the characterization of balls by their electrostatic properties in the linear Maxwell-Poisson context was achieved in \cite{mendez_reichel}, \cite{reichel} for dimensions $N\geq 2$, in \cite{ebenfelt}, \cite{gardiner} for $N=2$, and in \cite{garofalo} for the $p$-Laplacian context.}

\section{Equilibrium measures and equilibrium potentials in electrostatic theories} \label{equi_measures}

Let us consider an electromagnetic field $(\mathbf{E},\mathbf{B})$ in $\mathbb{R}^3$ described by a gauge potential $(\phi,\mathbf{A})$ as follows:
\[
\mathbf{E}=-\nabla\phi-\partial_t \mathbf{A},
\quad
\mathbf{B}=\nabla\times \mathbf{A}.
\]
In the Lagrangian formulation of electromagnetic theories like Maxwell, Born, Born-Infeld the evolution of the fields $(\mathbf{E},\mathbf{B})$ is described by the principle of stationary action, i.e., the equations of motion are the Euler-Lagrange equations of the action functional 
$$
\int_{\R^4} \mathcal{L}(\phi,\mathbf{A}, \n \phi, \n \mathbf{A},\partial_t \phi, \partial_t \mathbf{A})\,d(x,t).
$$
Here ${\mathcal L}$ is called the Lagrangian density. Let us recall the following definition of the energy of an electromagmetic field $(\mathbf{E},\mathbf{B})$  (see e.g. \cite{FOP,GF}). In the following we write $\langle\cdot,\cdot \rangle$ to denote the dual pairing between a space and its dual space.
%{\color{red}\begin{definition}
%	\label{def_energy}
%If $\mathcal{L}$ is the Lagrangian density of the electromagmetic field $(\mathbf{E},\mathbf{B})$, the energy is defined as
%\[
%\mathcal{E}:= 
%\int T^{00}
%\]
%where
%\[
%T^{00}=\frac{\partial\mathcal{L}}{\partial (\partial_t \mathbf{V})}\cdot \partial_t \mathbf{V} - \mathcal{L}
%\]
%and $\mathbf{V}=(\phi,\mathbf{A})$.
%\end{definition}
%}

\begin{definition}
	\label{def_energy}
Let $\mathbf{V}=(\phi,\mathbf{A})$ and suppose that $\mathcal{L}=\mathcal{L}(\mathbf{V}, \nabla\mathbf{V}, \partial_{t} \mathbf{V} )$ is the Lagrangian density of the electromagmetic field $(\mathbf{E},\mathbf{B})$. The {\em Lagrangian} at time $t$ is defined by
$$
\mathcal{I}(\mathbf{V}):=\int_{\R^3} \mathcal{L}(\mathbf{V}, \nabla \mathbf{V}, \partial_{t} \mathbf{V})\,dx
$$
and the {\em energy} at time $t$ is defined as
%\footnote{\color{red}Consider ${\mathcal L}={\mathcal L}(\mathbf{V},\mathbf{W},\mathbf{Z})$ and $\Sigma(\mathbf{V},\mathbf{W},\mathbf{Z}) := \int_{\R^3} {\mathcal L}(\mathbf{V},\mathbf{W},\mathbf{Z})\,dx$ so that 
%$\mathcal{I}(\mathbf V) = \Sigma(\mathbf{V},\nabla \mathbf{V},\partial _t\mathbf{V})$. Then $\partial_{\partial_t {\mathbf V}} \mathcal{I}(\mathbf{V}) := \partial_{\mathbf Z} \Sigma ( \mathbf{V},\nabla \mathbf{V},\partial _t\mathbf{V})$.}
\[ 
\mathcal{E}(\mathbf{V})
:= 
%{\color{green}\langle \partial_{\partial_t {\mathbf V}} \mathcal{I}(\mathbf{V}),\partial_t {\mathbf V} \rangle - \mathcal{I}(\mathbf{V}) = }
\int_{\R^3} T^{00}(\mathbf{V}, \nabla \mathbf{V}, \partial_{t} \mathbf{V})\,dx,
\]
where
\[
T^{00}(\mathbf{V}, \nabla \mathbf{V}, \partial_{t} \mathbf{V})=\frac{\partial\mathcal{L}(\mathbf{V}, \nabla \mathbf{V}, \partial_{t} \mathbf{V})}{\partial (\partial_t \mathbf{V})}\cdot \partial_t \mathbf{V} - \mathcal{L}(\mathbf{V}, \nabla \mathbf{V},\partial_{t} \mathbf{V}).
\]
\end{definition}

In the electrostatic case $\mathbf{V}=(\phi,0)$ with $\phi$ independent of $t$, the Lagrangian becomes 
$$
\mathcal{I}(\phi) = \int_{\R^3} \mathcal{L}(\phi, \nabla \phi)\,dx
$$
and the electrostatic energy is given by
\begin{equation} \label{static_energy}
\mathcal{E}(\phi) = -\mathcal{I}(\phi)= -\int_{\R^3} \mathcal{L}(\phi,\nabla\phi)\,dx.
\end{equation}
In the classical Maxwell theory, the Lagrangian density is given by
\[
\mathcal{L}_{\rm M}
=
\frac{1}{2} (|\mathbf{E}|^2 - |\mathbf{B}|^2)  - \rho\phi + \mathbf{J}\cdot\mathbf{A},
\]
where $\rho$ is a charge distribution and $\mathbf{J}$ is a current density. According to \eqref{static_energy}  the electrostatic Maxwell energy reads
$$
{\mathcal E}_{\rm M}(\phi)=-\mathcal{I}_{\rm M}(\phi)=-\frac{1}{2}\int_{\R^3} |\nabla\phi|^2\,dx + \langle \rho, \phi \rangle.
$$
The notation $\langle \rho, \phi \rangle$ stands for the duality bracket between a finite Borel measure $\rho$ on $\R^3$ and a continuous function.
%More generally, we can consider $\rho$ as a finite Borel measure on $\R^3$. In this case the Lagrangian and energy take the form 
%$$
%\mathcal{I}_{\rm M}(\phi)= -{\mathcal E}_{\rm M}(\phi)=\int_{\R^3}\frac{1}{2}|\nabla\phi|^2 \,dx - \langle \rho, \phi \rangle.
%$$
The electrostatic potential generated by a given distribution $\rho$ in absence of currents arises as the unique solution of the Euler-Lagrange equation corresponding to the Lagrangian $\mathcal{I}_{\rm M}$, i.e. the unique solution of the Poisson equation
\beq\label{eq:3}
-\Delta\phi= \rho \quad \hbox{in }\mathbb{R}^3, \quad \lim_{|x|\to\infty} \phi(x)= 0.
\eeq
For a positive Borel measure $\rho$ with compact support and with finite total mass, %$\int_{\R^3} \,d\rho<+\infty$ 
the distributional solution to \eqref{eq:3} is given by the (Newtonian) potential (see \cite[Definition 2.1, Theorem 4.1]{W})
\begin{equation}
\phi_{\rho}(x) = \frac{1}{4\pi} \int_{\supp \rho} \frac{1}{|x-y|}\,d\rho(y).
\label{newton_potential}
\end{equation}
In general, this is not a finite energy solution. For instance, it is well known that if $\rho=\delta_0$ (the Dirac-delta measure at $0$), then $\phi_{\rho}(x)= \frac{1}{4\pi |x|}$ does not have finite energy. 
%\sout{However, if, e.g., $\Omega\subset\R^3$ is a bounded Lipschitz-domain, $d\sigma$ is the Hausdorff measure on $\partial\Omega$, $d\rho = f d\sigma$ and $f \in L^\frac{4}{3}(\partial\Omega)$ then the unique finite energy solution $\phi_\rho$ of \eqref{eq:3} vanishing at infinity is given by}
%{\color{red}
%$$
%\phi_{\rho}(x) = \frac{1}{4\pi} \oint_{\partial\Omega} \frac{f(y)}{|x-y|}\,d\sigma(y)
%$$
%}
%\sout{
%and it can be found as {\color{red}a} {\color{orange}the} minimizer of the functional ${\mathcal S}_{\rm M}$
%%=\int_{\RT}\mathcal{L}_{\rm G}(\phi) \ dx - \langle\rho,\phi\rangle
%%= \frac{1}{2}\int_{\RT}|\n \phi|^2 \ dx - \langle\rho,\phi\rangle
%%\]
%in the function space $\mathcal{D}^{1,2}(\R^3)$. Notice that the Sobolev-embedding theorem guarantees that $\mathcal{D}^{1,2}(\R^3)$ embedds into $L^4(\partial\Omega)$ so that ${\mathcal S}_{\rm M}$ is well-defined.} 
Notice that,  whenever $\phi_\rho$ from \eqref{newton_potential} has finite energy, we have 
\[
\mathcal{E}_{\rm M}(\phi_\rho)  = 
\frac{1}{2}
\int_{\R^3}
|\nabla\phi_\rho|^2 \,dx
= \frac{1}{8 \pi} \int_{\supp \rho}\int_{\supp \rho} \frac{1}{|x-y|}\,d\rho(y)\,d\rho(x).
\]

Given $\Omega\subset\mathbb{R}^3$, we denote by $P(\partial\Omega)$ the set of probability measures on $\partial\Omega$, i.e., the set of positive Borel measures $\rho$ on $\R^3$ which are supported on $\partial\Omega$ and normalized by $\oint_{\partial\Omega} d\rho=1$.  For every $\rho\in P(\partial\Omega)$,   let $\phi_\rho$ be the corresponding solution of \eqref{eq:3} given by \eqref{newton_potential}. We can now define the notion of an \emph{equilibrium measure} in the Maxwell potential theory.

\begin{definition} A probability measure $\rho^\ast\in P(\partial\Omega)$ is called an equilibrium measure or equilibrium distribution for $\mathcal{E}_{\rm M}$ if
\[
\mathcal{E}_{\rm M}(\phi_{\rho^\ast})
=  \inf_{\rho \in P(\partial\Omega)} \mathcal{E}_{\rm M}(\phi_\rho) = \inf_{\rho \in P(\partial\Omega)}  \frac{1}{8 \pi} \oint_{\partial\Omega}\oint_{\partial\Omega} \frac{1}{|x-y|}\,d\rho(y)\,d\rho(x).
\]
%\begin{align*}
%\mathcal{E}_{\rm M}(\phi_{\rho^\ast})
%&=  \inf_{\rho \in P(\partial\Omega)} \mathcal{E}_{\rm M}(\phi_\rho) =  \inf_{\rho \in P(\partial\Omega)} \mathcal{H}_{\rm M}(\phi_\rho) \\
%& = { \inf_{\rho \in P(\partial\Omega)}  \frac{1}{8 \pi} \oint_{\partial\Omega}\oint_{\partial\Omega} \frac{1}{|x-y|}\,d\rho(y)\,d\rho(x).}
%\end{align*}
The corresponding solution $\phi_{\rho^\ast}$ of \eqref{eq:3} given by \eqref{newton_potential}  is called equilibrium potential.
\end{definition}
Notice that the expressions for $\mathcal{E}_{\rm M}$ and the double integral may be infinite for some measures $\rho\in P(\partial\Omega)$. The infimum, however, is always  attained by a unique equilibrium measure and the equilibrium potential is constant on $\overline{\Omega}$, except for a subset of capacity zero, cf. \cite[Theorem~6.3, Theorem~7.1, and Theorem~9.1]{W}.

{ For a more recent literature about the traditional electrostatic equilibrium problem using the Coulomb potential of electrostatics we refer to \cite{L,ST}.}

\medskip

Let us now consider the Lagrangian density introduced by Born and the modified one by Born and Infeld, namely
$$
\mathcal{L}_{\rm B} = b^2  \left(1-\sqrt{1-\frac{|\mathbf{E}|^2 - |\mathbf{B}|^2}{b^2}}\right)  - \rho\phi + \mathbf{J}\cdot \mathbf{A}
$$
and 
$$
\mathcal{L}_{\rm BI}
=
b^2\left(1-\sqrt{1-\frac{|\mathbf{E}|^2 - |\mathbf{B}|^2}{b^2}-\frac{(\mathbf{E}\cdot\mathbf{B})^2}{b^4}}\right)  -\rho\phi + \mathbf{J}\cdot \mathbf{A}
$$
where $b>0$ is the Born \emph{field strength parameter} (see \cite{Bnat,B,BInat,BI}).
In the electrostatic case, Born and Born-Infeld Lagrangian densities lead  by \eqref{static_energy} to the same energy 
$$
\mathcal{E}(\phi) = -\mathcal{I}(\phi) = -b^2\int_{\R^3} \left(1 - \sqrt{1-|\nabla\phi|^2/b^2}\right)\,dx + \langle\rho,\phi\rangle.
$$
In the sequel we set $b=1$ since the exact value of $b$  has no qualitative influence on our analysis.

One can generalize the functionals $\mathcal{E, I}$ to $\R^N$ with $N\geq 3$ { and $P(\partial\Omega)$ will denote Borel probability measures on $\R^N$ supported on $\partial\Omega$ for bounded domains $\Omega\subset\R^N$}. Then,  at least formally, the Euler-Lagrange equation for the  Lagrangian $\mathcal{I}$ is given by
\begin{equation}\label{eq:BI}
\tag{$\mathcal{BI}$}
\left\{
\begin{array}{rcll}
-\operatorname{div}\left(\displaystyle\frac{\nabla \phi}{\sqrt{1-|\nabla \phi|^2}}\right)&=& \rho & \hbox{in }\mathbb{R}^N,
\\[6mm]
\displaystyle\lim_{|x|\to \infty}\phi(x)&=& 0.
\end{array}
\right.
\end{equation}
%{\color{red}and $\mathcal{E}_{\rm B}=\mathcal{E}_{\rm BI}=:\mathcal{E}$, with 
%\begin{equation}\label{e=-i}
%\mathcal{E}(\phi)
%=
%\int 
%\left[
%\sqrt{1-|\nabla\phi|^2 } - 1
%\right]
%+ \langle\rho,\phi\rangle
%:= - \ci_\rho(\phi),
%\end{equation}
%where by $\langle\cdot,\cdot\rangle$ we denote the duality pair in $\X$.} 
As shown in \cite{BDP} for a large class of measures, finite energy solutions of \eqref{eq:BI} can be found by minimizing the functional ${\mathcal I}$ on the set
\[
\X=D^{1,2}(\RN)\cap\{\phi\in C^{0,1}(\RN):\|\nabla\phi\|_\infty\leq 1\},
\]
which is a closed and convex subset of the space $D^{1,2}(\R^N)$.

In particular, following the definition of a weak solution \cite[Definition 1.2]{BDP}, we have that  a weak solution of \eqref{eq:BI} must minimize ${\mathcal I}$ on $\X$, cf. \cite[Proposition 2.6]{BDP}.

%{\color{red}
%%=\int_{\RT}\mathcal{L}_{\rm G}(\phi) \ dx - \langle\rho,\phi\rangle
%%= \frac{1}{2}\int_{\RT}|\n \phi|^2 \ dx - \langle\rho,\phi\rangle
%%\]
%in a suitable functional space and for suitable $\rho$'s (see \cite{BDP}).
%In particular, if we approach the problem as in  \cite{BDP}, we consider 
%\[
%\X=D^{1,2}(\RN)\cap\{\phi\in C^{0,1}(\RN):\|\nabla\phi\|_\infty\leq 1\},
%\]
%equipped with the norm 
%$\|\n \cdot\|_2$,
% and we have that if $\phi_{\rho}$ is a weak solution of \eqref{eq:BI}, then $\phi_{\rho}$ must minimize $\ci_\rho$, defined on $\X$ (\!\!\cite[Proposition 2.6]{BDP}).
%
% Moreover, following the Definition of weak solution \cite[Definition 1.2]{BDP}, we have, in particular, that a weak solution $\phi_{\rho}$ must satisfy
%\begin{equation*}
%\int \frac{|\nabla \phi_\rho|^2}{\sqrt{1-|\nabla \phi_\rho|^2}}= \langle\rho,\phi_\rho\rangle,
%\end{equation*}
%and so, in this case, the energy on the solutions is
%\begin{equation}
%\label{defH}
%\mathcal{E}(\phi_\rho)
%=
%\int 
%\left[
%\frac{|\nabla\phi_\rho|^2}{\sqrt{1-|\nabla\phi_\rho|^2}}
%- 1
%+\sqrt{1-|\nabla\phi_\rho|^2}
%\right]
%=
%\int 
%\left[
%\frac{1}{\sqrt{1-|\nabla\phi_\rho|^2}}
%- 1
%\right]
%:=
%\mathcal{H}(\phi_\rho).
%\end{equation}
%}

%***As*** in the Maxwell case, we consider $\Omega\subset\mathbb{R}^N$ and  define  $P(\partial\Omega)$ as the set of probability measures on $\partial\Omega$. 
For any $\rho\in P(\partial\Omega)$, we have that $\rho \in \X^\ast$ and by \cite[Theorem~1.3]{BDP} there exists a unique $\phi_\rho\in \X$ which minimizes $\mathcal{I}$ (some details on the properties of $\mathcal{X}^\ast$ are given in Section~\ref{se:pre} after Lemma~\ref{L_infinity_estimate}). Notice that, by \cite[Lemma~2.12]{BDP}, $\phi_\rho\geq 0$.  Unfortunately, as far as we are aware, it is not known whether in general the minimizer $\phi_{\rho}$ is a solution of \eqref{eq:BI}.\footnote{{Known cases, where $\phi_\rho$ solves \eqref{eq:BI}, include $\rho\in L^\infty(\R^N)\cap\X^\ast$ and $\rho$ being a radially distributed charge density or a finite sume of point charges, cf. \cite{BDP} and \cite{K,K-corr}}.} Thus,  dealing with the Born-Infeld Lagragian, we  have to define equilibrium measures through the minimizer $\phi_{\rho}$ of $\mathcal{I}$ rather than through the solution of \eqref{eq:BI}.

\begin{definition} 
A probability measure $\rho^\ast\in P(\partial\Omega)$ is called an equilibrium measure or equilibrium distribution for $\mathcal{E}$ if 
$$
\mathcal{E}(\phi_{\rho^\ast}) =  \inf_{\rho \in P(\partial\Omega)} \mathcal{E}(\phi_\rho),
$$
where for every $\rho\in P(\partial\Omega)$ we denote by $\phi_\rho$ the unique minimizer of $\mathcal{I}$ on $\mathcal{X}$. The corresponding minimizer $\phi_{\rho^\ast}$ of $\mathcal{I}$ is called equilibrium potential.
\end{definition}

{
\begin{remark} \label{rem:rel_e_h} Another way to obtain an expression for the energy is to move from the Lagrangian (expressed in terms of $\nabla\phi$) to the Hamiltonian (expressed in terms of the dual variable $\rho$) via the Legendre transform. Formally,  this yields in the Maxwell case
\begin{align*}
\mathcal{H}_{\rm M}(\rho) &=  \sup_{\phi} \left(\langle\rho,\phi\rangle - \frac{1}{2}\int_{\R^3} |\nabla\phi|^2\,dx\right) \\
&= \langle\rho,\phi_\rho\rangle - \frac{1}{2}\int_{\R^3} |\nabla\phi_\rho|^2\,dx \\
&= \frac{1}{2}\int_{\R^3} |\nabla\phi_\rho|^2 \,dx
\end{align*}
since $\phi_\rho$ satisfies \eqref{eq:3}. Similarly, in the Born-Infeld case one finds
\begin{align*}
\mathcal{H}(\rho) &=  \sup_{\phi} \left(\langle\rho,\phi\rangle - \frac{1}{2}\int_{\R^3} 1-\sqrt{1-|\nabla\phi|^2}\,dx\right) \\
&= \langle\rho,\phi_\rho\rangle - \frac{1}{2}\int_{\R^3} 1-\sqrt{1-|\nabla\phi_\rho|^2}\,dx\\
&= -\mathcal{I}(\phi_\rho).
\end{align*}
\end{remark}
In both cases the Hamiltonian of a measure $\rho$ is the same as the energy of $\phi_\rho$. If $\phi_\rho$ satisfies \eqref{eq:BI} more can be said: as a consequence of the definition of a weak solution of \eqref{eq:BI}, see \cite[Definition 1.2]{BDP}, we have, in particular, that a weak solution $\phi_{\rho}\in \mathcal{X}$ satisfies
\begin{equation}
\label{testing_with_phi}
\int_{\R^N} \frac{|\nabla \phi_\rho|^2}{\sqrt{1-|\nabla \phi_\rho|^2}}\,dx= \langle\rho,\phi_\rho\rangle,
\end{equation}
and so, in this case, the Hamiltonian of the measure $\rho$ (and hence the energy of the solution $\phi_\rho$) is
\begin{align*}
\mathcal{E}(\phi_\rho) = \mathcal{H}(\rho)
&=
\int_{\R^N}
\left(- 1
+\sqrt{1-|\nabla\phi_\rho|^2}+
\frac{|\nabla\phi_\rho|^2}{\sqrt{1-|\nabla\phi_\rho|^2}}
\right)\,dx \\
& =
\int_{\R^N} 
\left(
\frac{1}{\sqrt{1-|\nabla\phi_\rho|^2}}
- 1
\right)\,dx =: {\mathcal K}(\phi_\rho). 
\end{align*}
In general however, we only know that a minimizer $\phi_\rho\in\mathcal{X}$ of $\mathcal{I}$ satisfies \eqref{testing_with_phi} with ``$\leq$" instead of ``$=$", and thus we only have $\mathcal{E}(\phi_\rho) =\mathcal{H}(\rho) \geq {\mathcal K}(\phi_\rho).$ 
}

%\begin{definition}
%The energy of a function $\phi$ is the Legendre transform
%%\footnote{Given a function $f$, the Legendre transform of $f$ is $f'(x)x-f(x)$. Thus $f(x) + f'(x) x - f(x)= f'(x) x$ and so, on the solutions, $\mathcal{J}'(\phi_\rho)[\phi_\rho]= \langle\rho,\phi_\rho\rangle$.} 
%w.r.t. ${\bf E}(=-\nabla\phi)$ of the Lagrangian, i.e. the Hamiltonian, evaluated in $\phi$.
%\end{definition}

%\todo[inline]{{\bf Doubt:}  in Kiessling and in the classical case, in the energy of the electric field, the linear term (with $\rho$) disappears. Possible reasons:\\
%(i) ``w.r.t. ${\bf E}$'' means that we consider only $\mathcal{L}_{\rm BI}$ (since this term comes directly from the ``potential'' term in the equation);\\
%(ii) ``w.r.t. ${\bf E}$'' means that we consider $\phi$ as unknown and, since the Legendre transform of a linear function is $0$, then the term disappears;\\
%(iii) ...\\
%Anyway $\rho$ disappears and so:}

%In the Born-Infeld case, since the Legendre transform of $1-\sqrt{1-x^2}$ is
%\[
%\frac{x^2}{\sqrt{1-x^2}} - 1 + \sqrt{1-x^2} =\frac{1}{\sqrt{1-x^2}} - 1
%\]
%we have that

%In the following we will consider only the Born-Infeld case. So, for the sake of simplicity, we will omit the subscripts we used in the introduction to distinguish between the Maxwell (Gauss) case and the Born-Infeld case. Moreover, since in our arguments, the dipendence of $\ci$ on the charge distribution $\rho$ is fundamental, we emphasize that by writing $\ci_\rho$.

\section{Preliminary results}\label{se:pre}

 Here we provide the functional analytic setting for the minimization of the Lagrangian. We prove in Lemma~\ref{continuous_dependance} that the minimizer $\phi_\rho$ depends continuously on the measure $\rho$. Thereafter we provide several tools which allow to deduce that $\phi_\rho$ attains its maximum on $\partial\Omega$, cf. Lemma~\ref{maxmax}.

\medskip

Let $\rho\in P(\partial\Omega)$. In the following we add a subscript to any functional that depends on the choice of the measure $\rho$ in its definition to emphasize this dependence and to keep the notation closer to \cite{BDP}. For instance,
$$
{\mathcal I}_\rho(\phi) := \int_{\R^N} \left(1-\sqrt{1-|\nabla\phi|^2}\right)\,dx - \langle \rho,\phi\rangle, \quad \phi \in \mathcal{X}.
$$
We also define the functional $\cj:\X\to\R$ that does not depend on the measure $\rho$ by
\[
\cj(\phi):=\int_{\R^N} \left(1-\sqrt{1-|\n \phi|^2}\right)\,dx, \quad \phi\in \mathcal{X}.
\]
Observe that, since $\|\nabla\phi\|_\infty \leq 1$, we can write ${\mathcal J}$ as the infinite series
\begin{equation}\label{serie}
\cj(\phi)=\sum_{h=1}^{+\infty} \frac{\alpha_h}{2h}\|\nabla \phi\|_{2h}^{2h}.
\end{equation}
The exact values of the coefficients $\alpha_h$ are given in \eqref{defi_alpha_h}. Next we recall some fundamental properties of the  set $\X$ (\!\!\cite[Lemma 2.1]{BDP}). 
%{\color{red} Let us mention that $\mathcal{X}$ is a closed convex subset of $D^{1,2}(\R^N)\cap C^{0,1}(\R^N)$ and the norm %in $\mathcal{X}$ is given by 
%$\|\phi\|_{\mathcal X} = \|\nabla\phi\|_{L^2(\R^N)}+\|\nabla\phi\|_{L^\infty(\R^N)}$. \texttt{I think it is not so easy to %say what weak convergence really means ...}} 
\begin{lemma}
	\label{lemma21}
	The following assertions hold:
	\begin{enumerate}[label=(\roman*),ref=\roman*]
		\item \label{it:w1p}$\X$ is continuously embedded in $W^{1,p}(\RN)$ for all $p\ge 2^*=2N/(N-2)$;
		\item \label{it:embLinf}  $\X$ is continuously embedded in $L^\infty(\RN)$;
		\item \label{it:C0} if $\phi\in \X$, then $\lim_{|x|\to \infty} \phi(x)=0$;
		\item \label{it:wc} $\X$ is weakly closed;
		%\item \label{it:unif} if $(\phi_n)_n\subset\X$ converges weakly to $\phi$ in $\X$, then it converges uniformly on  compact sets.
		\item \label{it:compact}  if $(\phi_n)_n\subset\X$ is bounded, there exists $\bar \phi\in \X$ such that, up to a subsequence, $\phi_{n}\rightharpoonup \bar \phi$ weakly in $D^{1,2}(\R^N)$ and uniformly on compact sets.
	\end{enumerate}
\end{lemma}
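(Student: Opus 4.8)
The plan is to lean throughout on two facts: the Sobolev embedding $\D \hookrightarrow L^{2^*}(\RN)$ (available since $N\ge 3$), which controls $\phi$ through $\|\phi\|_{2^*}\le C\|\nabla\phi\|_2$, and the uniform Lipschitz bound $\|\nabla\phi\|_\infty\le 1$ built into $\X$. I would prove the assertions in the order (\ref{it:embLinf}), (\ref{it:C0}), (\ref{it:w1p}), (\ref{it:wc}), (\ref{it:compact}): the interpolation for (\ref{it:w1p}) needs the $L^\infty$-bound from (\ref{it:embLinf}), and the compactness in (\ref{it:compact}) rests on the weak closedness in (\ref{it:wc}).

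For (\ref{it:embLinf}) and (\ref{it:C0}) the decisive device is a \emph{spike} estimate. Given $\phi\in\X$ and a point $x_0$ with $|\phi(x_0)|=a$, the $1$-Lipschitz bound forces $|\phi|\ge a/2$ on the ball $B_{a/2}(x_0)$, so that $\int_{B_{a/2}(x_0)}|\phi|^{2^*}\ge c_N\,a^{\,2^*+N}$. Since $\phi\in L^{2^*}$ with $\|\phi\|_{2^*}\le C\|\nabla\phi\|_2$, this bounds $a$ by a power of $\|\nabla\phi\|_2$, giving $\phi\in L^\infty$; applying the same inequality to the $2$-Lipschitz difference $\phi-\psi$ of two elements of $\X$ yields continuity of the inclusion, proving (\ref{it:embLinf}). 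For (\ref{it:C0}), if $\phi$ failed to vanish at infinity there would be a sequence $x_n\to\infty$ with $|\phi(x_n)|\ge\eps$; after passing to a subsequence for which the balls $B_{\eps/2}(x_n)$ are pairwise disjoint, each contributes at least $c_N(\eps/2)^{2^*+N}$ to $\int_{\RN}|\phi|^{2^*}$, contradicting $\phi\in L^{2^*}$.

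For (\ref{it:w1p}) I would interpolate. The bound $\|\nabla\phi\|_\infty\le1$ gives $\|\nabla\phi\|_p^p=\int|\nabla\phi|^{p-2}|\nabla\phi|^2\le\|\nabla\phi\|_2^2$ for every $p\ge2$, so $\nabla\phi\in L^p$; and since $\phi\in L^{2^*}\cap L^\infty$ by (\ref{it:embLinf}), the standard interpolation $\|\phi\|_p\le\|\phi\|_{2^*}^{2^*/p}\|\phi\|_\infty^{1-2^*/p}$ gives $\phi\in L^p$ for all $p\ge2^*$; applying these to differences yields continuity of $\X\hookrightarrow W^{1,p}(\RN)$. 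For (\ref{it:wc}) I would use convexity: $\X$ is a convex subset of $\D$, so by Mazur's theorem it suffices that $\X$ be strongly closed. If $\phi_n\to\phi$ in $\D$ with $\phi_n\in\X$, then $\nabla\phi_n\to\nabla\phi$ in $L^2$, a subsequence converges a.e., and $\|\nabla\phi\|_\infty\le1$ is preserved; together with $\phi\in\D$ this produces a $1$-Lipschitz representative, so $\phi\in\X$.

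Finally, (\ref{it:compact}) combines (\ref{it:wc}) with Arzel\`a--Ascoli. A bounded sequence in $\X$ is bounded in $\D$, hence has a subsequence converging weakly in $\D$ to some $\bar\phi\in\X$ by (\ref{it:wc}); the same functions are uniformly $1$-Lipschitz and uniformly bounded by (\ref{it:embLinf}), so a further subsequence converges locally uniformly, and identifying this limit with $\bar\phi$ (the two agree as distributions) gives uniform convergence on compact sets. The main obstacle is the spike estimate underlying (\ref{it:embLinf})--(\ref{it:C0}): a generic $\D$-function need not be bounded nor decay pointwise (the Newtonian potential is the standard counterexample), so the argument must genuinely couple the $L^{2^*}$-integrability to the Lipschitz constraint; once that pointwise control is secured, the remaining parts reduce to routine interpolation and functional-analytic facts.
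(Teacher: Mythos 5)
Your proposal is correct, and all five items are established with complete arguments. It is worth noting that the paper itself does not reprove this lemma: it is quoted verbatim from \cite[Lemma 2.1]{BDP}, and the closest in-paper argument is the refined estimate of Lemma~\ref{L_infinity_estimate}, whose proof in fact \emph{presupposes} items (\ref{it:w1p})--(\ref{it:embLinf}). Your route to the two crucial items (\ref{it:embLinf}) and (\ref{it:C0}) is genuinely different from the one used there and in \cite{BDP}: those proofs pass through $W^{1,t}(\RN)$ with $t>N$ --- interpolating the gradient between $L^2$ and $L^\infty$, running the Sobolev chain to control $\|\phi\|_t$, and invoking Morrey's embedding $W^{1,t}(\RN)\hookrightarrow L^\infty(\RN)$ --- whereas your spike estimate couples the $1$-Lipschitz bound directly to the $L^{2^*}$-control: from $|\phi(x_0)|=a$ one gets $|\phi|\ge a/2$ on $B_{a/2}(x_0)$, hence $c_N\,a^{2^*+N}\le\|\phi\|_{2^*}^{2^*}\le C\|\nabla\phi\|_2^{2^*}$, i.e. $\|\phi\|_\infty\le C\|\nabla\phi\|_2^{2/N}$ (since $2^*/(2^*+N)=2/N$), and the disjoint-balls variant of the same inequality gives the decay in (\ref{it:C0}). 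This is more elementary (only the Sobolev inequality in $\D$ is needed), it is quantitative --- the exponent $2/N$ it yields is in fact cleaner than, and for small norms stronger than, the bound of Lemma~\ref{L_infinity_estimate}, and it would serve equally well in the proof of Lemma~\ref{estimate_for_minimizers} --- and your application of it to $2$-Lipschitz differences correctly handles the point that ``continuously embedded'' must here be read as continuity of a nonlinear inclusion, so differences, not norms of single elements, are what matter. The remaining items follow the same standard route as \cite{BDP}: gradient interpolation using $\|\nabla\phi\|_\infty\le 1$ for (\ref{it:w1p}); convexity plus Mazur's theorem, with the a.e.\ gradient bound upgraded to a $1$-Lipschitz representative, for (\ref{it:wc}); and weak compactness plus Arzel\`a--Ascoli, identifying the weak and locally uniform limits through their common distributional limit, for (\ref{it:compact}). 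I see no gaps.
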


 Next we give a more detailed version of \eqref{it:embLinf} from the previous lemma.

\begin{lemma}\label{L_infinity_estimate}
For any $t>\max\{N,\frac{2N}{N-2}\}$ there exists a constant $C(t)$ such that, for all $\phi\in \X$,
$$
\|\phi\|_\infty \leq C(t) \|\n \phi\|_2^\frac{2}{t}(1+\|\n \phi\|_2^\frac{2}{N}).
$$
\end{lemma}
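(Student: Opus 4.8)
The plan is to prove the cleaner, scale-adapted estimate
\[
\|\phi\|_\infty \le C_0\,\|\n\phi\|_2^{2/N}
\]
and then to recover the stated inequality from it by an elementary comparison. First I would reduce to the nonnegative function $u:=|\phi|$, which is again Lipschitz with constant at most $1$ and satisfies $\|u\|_\infty=\|\phi\|_\infty$ and $\|u\|_{2^*}=\|\phi\|_{2^*}$. By Lemma~\ref{lemma21} the function $\phi$ (hence $u$) is continuous, tends to $0$ at infinity and lies in $D^{1,2}(\RN)$ with $\phi\in L^{2^*}(\RN)$, so the Sobolev inequality $\|\phi\|_{2^*}\le S\|\n\phi\|_2$ is available; moreover the decay at infinity guarantees that $u$ attains its maximum $M:=\|\phi\|_\infty$ at some finite point $x_0$ (if $M=0$ there is nothing to prove).

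The core of the argument is to convert the pointwise value $M$ into a lower bound on an $L^{2^*}$ norm by exploiting the Lipschitz constraint. Since $\|\n\phi\|_\infty\le 1$, for every $x$ with $|x-x_0|\le M/2$ one has $u(x)\ge u(x_0)-|x-x_0|\ge M/2$, so $u\ge M/2$ on the ball $B_{M/2}(x_0)$. Integrating gives
\[
\|\phi\|_{2^*}^{2^*}=\irn |u|^{2^*}\,dx\ge \left(\frac{M}{2}\right)^{2^*}\bigl|B_{M/2}(x_0)\bigr| = c_N\,M^{2^*+N},
\]
where $c_N>0$ depends only on $N$. Combining this with the Sobolev inequality yields $c_N M^{2^*+N}\le S^{2^*}\|\n\phi\|_2^{2^*}$, and since $2^*/(2^*+N)=2/N$ this is exactly the clean estimate above with $C_0=(S^{2^*}/c_N)^{1/(2^*+N)}$. (The same power $2/N$ also drops out of the Gagliardo--Nirenberg inequality with the pair of exponents $(t,2^*)$, which is a useful sanity check.)

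Finally I would deduce the stated inequality. Writing $s:=\|\n\phi\|_2$, it suffices to observe that the function
\[
s\longmapsto \frac{s^{2/N}}{s^{2/t}\,(1+s^{2/N})}=\frac{s^{\,2/N-2/t}}{1+s^{2/N}}
\]
is continuous and positive on $(0,\infty)$ and tends to $0$ both as $s\to0^+$ (because $2/N-2/t>0$, which uses $t>N$) and as $s\to\infty$ (its decay there being $s^{-2/t}$); hence it is bounded by some $C(t)<\infty$. Therefore $\|\n\phi\|_2^{2/N}\le C(t)\,\|\n\phi\|_2^{2/t}(1+\|\n\phi\|_2^{2/N})$, and combining with the clean estimate gives the claim with constant $C_0C(t)$. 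The genuine content sits entirely in the middle step: the Lipschitz bound forces $\phi$ to stay of size $\|\phi\|_\infty$ on a ball whose radius scales like $\|\phi\|_\infty$, which is precisely what makes the $L^{2^*}$ lower bound effective. The only points requiring care are that this needs $N\ge 3$ (so that $2^*$ is finite) together with the decay at infinity to locate the maximum; note in particular that the full hypothesis $t>\max\{N,2^*\}$ is stronger than this route actually uses, since only $t>N$ enters, through the final comparison.
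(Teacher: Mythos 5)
Your proof is correct, and it takes a genuinely different route from the paper's. The paper argues purely through function-space embeddings: it picks $s=\tfrac{tN}{N+t}$ so that $t=s^\ast$, uses the embedding of $\X$ into $W^{1,t}(\R^N)\hookrightarrow L^\infty(\R^N)$ together with the Sobolev inequality $\|\phi\|_t\le C\|\nabla\phi\|_s$, and then exploits $|\nabla\phi|\le 1$ only to interpolate norms of the gradient, via $\|\nabla\phi\|_p\le\|\nabla\phi\|_2^{2/p}$ for $p\ge 2$; this is why the paper needs the full hypothesis $t>\max\{N,2^\ast\}$ (the condition $t>2^\ast$ guarantees $s>2$). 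You instead use the Lipschitz constraint geometrically: since $\|\nabla\phi\|_\infty\le1$, the function stays above $\tfrac{M}{2}$ on a ball of radius $\tfrac{M}{2}$ around a maximum point (which exists by continuity and decay at infinity), which converts the sup norm into an $L^{2^\ast}$ lower bound; combined with the plain Sobolev inequality in $D^{1,2}(\R^N)$ this yields the stronger, scale-sharp estimate $\|\phi\|_\infty\le C_0\|\nabla\phi\|_2^{2/N}$ with a dimension-only constant, from which the stated inequality follows by the elementary boundedness of $s\mapsto s^{2/N-2/t}/(1+s^{2/N})$ on $(0,\infty)$ (valid since $t>N$). Your approach buys a cleaner and strictly stronger intermediate estimate, needs only the basic $D^{1,2}$ Sobolev inequality rather than the $W^{1,t}\hookrightarrow L^\infty$ embedding, and exposes that only $t>N$ is genuinely used; the paper's approach is more routine in that it recycles the already-established embeddings of Lemma~\ref{lemma21} and requires no discussion of where the maximum is attained. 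Both exploit the constraint $\|\nabla\phi\|_\infty\le 1$ in an essential way, just at different points of the argument.
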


\begin{proof} With $t$ as in the assumption, define $s = \frac{tN}{N+t}$.
Then $s<N$ and $t=s^\ast=\frac{Ns}{N-s}$. Since $t>2^*=\frac{2N}{N-2}$, we deduce that $s>2$. Recall also that, by  (\ref{it:w1p}) and (\ref{it:embLinf}) in Lemma~\ref{lemma21}, $\X$ embeds into $W^{1,t}(\R^N)$ and therefore into $L^\infty(\R^N)$. Hence, for every $\phi\in \X$, using Sobolev inequality with $t=s^\ast$ and the fact that $|\nabla\phi|\leq 1$, we get 
\[
\|\phi\|_\infty
 \leq  C(t) (\|\nabla \phi\|_t + \|\phi\|_{t})
\leq \bar C(t) (\|\nabla \phi\|_t + \|\nabla \phi\|_s) 
\leq \bar C(t) (\|\nabla\phi\|_2^\frac{2}{t}+\|\nabla \phi\|_2^\frac{2}{s})
\]
since $s,t>2$. This proves the claim because $\frac{1}{s} = \frac{1}{t}+\frac{1}{N}$.
\end{proof}

As mentioned in the introduction, one of the main results of \cite[Theorem~1.3]{BDP} states that for every $\rho\in \mathcal{X}^\ast$ there exists a unique minimizer $\phi_\rho\in \mathcal{X}$ of $\mathcal{I}_\rho$. We denote by $\X^\ast$ the dual space of $D^{1,2}(\R^N)\cap C^{0,1}(\R^N)$ endowed with the norm $\|\nabla \cdot\|_{L^2}+\|\nabla\cdot\|_{L^\infty}$. Note that $\mathcal{X}^\ast \supsetneqq D^{1,2}(\R^N)^\ast$ since, e.g., finite Borel measures on $\R^N$ are included in $\mathcal{X}^\ast$. Moreover, cf. \cite[Theorem~1.5]{BDP}, if $\rho\in L^\infty(\R^N)$ then this unique minimizer is a \emph{strictly spacelike, weak solution} in the sense given in the following two definitions.

\begin{definition}[cf. \cite{BS}]\label{def:spacelike}
Let $\Omega\subset\RN$ be open. A function $\phi\in C^{0,1}(\Omega)$ is called strictly spacelike if $\phi\in C^1(\Omega)$ and $|\n \phi(x)|< 1$ for all $x\in\Omega$.
\end{definition}

\begin{definition}[cf. \cite{BDP}]\label{def:weak_solution} Let $\rho\in {\mathcal X}^\ast$. A function $\phi\in\mathcal{X}$ is called a weak solution of \eqref{eq:BI} if  
\begin{equation} \label{eq:BI_weak}
\int_{\R^N} \frac{\nabla\phi\cdot \nabla\psi}{\sqrt{1-|\nabla\phi|^2}}\,dx = \langle \rho,\psi\rangle  \mbox{ for all } \psi\in  C_c^\infty(\R^N).
\end{equation}
\end{definition}

\begin{remark} Note that the requirement that \eqref{eq:BI_weak} holds for all $\psi \in C_c^\infty(\R^N)$ is equivalent to asking that \eqref{eq:BI_weak} holds for all $\psi \in {\mathcal X}$, which is the original definition given in \cite{BDP}. 
\end{remark}

\begin{lemma}\label{estimate_for_minimizers} There exists a constant $C>0$ such that 
$$
\|\n\phi_\rho\|_2 \leq C 
$$
for every $\rho\in P(\partial\Omega)$.
\end{lemma}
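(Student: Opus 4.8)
The plan is to exploit that $\phi_\rho$ minimizes $\mathcal{I}_\rho$ over $\mathcal{X}$ and to compare it with the trivial competitor $\phi\equiv 0\in\mathcal{X}$. Since $\mathcal{I}_\rho(0)=0$, minimality forces $\mathcal{I}_\rho(\phi_\rho)\le 0$, that is,
$$
\cj(\phi_\rho)=\int_{\R^N}\left(1-\sqrt{1-|\nabla\phi_\rho|^2}\right)\,dx\le \langle\rho,\phi_\rho\rangle.
$$
First I would control the left-hand side from below by the $L^2$-norm of the gradient. Using the elementary inequality $1-\sqrt{1-s^2}\ge s^2/2$, valid for $s\in[0,1]$ (which follows by differentiating), together with $\|\nabla\phi_\rho\|_\infty\le 1$, this yields
$$
\tfrac12\|\nabla\phi_\rho\|_2^2\le \cj(\phi_\rho)\le \langle\rho,\phi_\rho\rangle.
$$

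Next I would estimate the duality pairing. Because $\rho\in P(\partial\Omega)$ is a probability measure and $\phi_\rho\ge 0$ by \cite[Lemma~2.12]{BDP}, we have
$$
\langle\rho,\phi_\rho\rangle=\oint_{\partial\Omega}\phi_\rho\,d\rho\le \|\phi_\rho\|_\infty\oint_{\partial\Omega}d\rho=\|\phi_\rho\|_\infty,
$$
so the entire $\rho$-dependence is funnelled through $\|\phi_\rho\|_\infty$, with no other dependence on the measure. Now I would invoke the refined embedding of Lemma~\ref{L_infinity_estimate}: fixing any admissible $t>\max\{N,2N/(N-2)\}$ and writing $X:=\|\nabla\phi_\rho\|_2$, the two displays combine to
$$
\tfrac12 X^2\le C(t)\,X^{2/t}\bigl(1+X^{2/N}\bigr).
$$

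The heart of the matter is then an absorption argument: since $t>N\ge 3$, both exponents $2/t$ and $2/t+2/N$ on the right are strictly smaller than $2$ (indeed $2/t+2/N<4/N\le 4/3<2$), so the right-hand side is subquadratic in $X$. Concretely, if $X\ge 1$ one bounds the bracket by $2X^{2/N}$ and divides by $X^{2/t+2/N}$ to get $X^{2-2/t-2/N}\le 4C(t)$, and since the exponent $2-2/t-2/N$ is positive this gives $X\le (4C(t))^{1/(2-2/t-2/N)}$; the case $X<1$ is immediate. As the constant $C(t)$ in Lemma~\ref{L_infinity_estimate} is independent of $\rho$, the resulting bound is uniform over $\rho\in P(\partial\Omega)$, which proves the claim. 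I expect the only delicate point to be verifying this subcriticality of the exponents — that the dimensional restriction $N\ge 3$ and the choice of $t$ keep $2-2/t-2/N>0$ — since this is exactly what lets the quadratic term dominate and prevents the estimate from being vacuous.
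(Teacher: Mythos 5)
Your proposal is correct and follows essentially the same route as the paper: compare $\phi_\rho$ with the zero competitor, bound $1-\sqrt{1-\tau}$ below by $\tau/2$, estimate $\langle\rho,\phi_\rho\rangle\le\|\phi_\rho\|_\infty$ using that $\rho$ is a probability measure, invoke Lemma~\ref{L_infinity_estimate}, and absorb the subquadratic right-hand side. The only cosmetic difference is bookkeeping of exponents: the paper observes the sharper facts $2/t<1$ and $2/t+2/N<1$ (the latter following from $t>2N/(N-2)$), while you only use that both exponents are below $2$, which is all the absorption step requires.
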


\begin{proof}
	Since $\mathcal{I}_\rho(0)=0$ and $\frac{1}{2}\tau \leq 1-\sqrt{1-\tau} $, for $\tau\in[0,1]$, we have
	\[
	0
	\geq
	 \mathcal{I}_\rho(\phi_\rho)
	\geq
	\frac{1}{2} \|\nabla \phi_\rho\|_2^2 - \|\phi_\rho\|_{L^\infty(\partial\Omega)}
	\]
	and so, by Lemma~\ref{L_infinity_estimate},
	\[
	\|\nabla \phi_\rho\|_2^2 \leq 2C(t) \|\nabla\phi_\rho\|_2^\frac{2}{t}(1+\|\nabla\phi_\rho\|_2^\frac{2}{N})
	\]
with $t>\max\{N,\frac{2N}{N-2}\}$. Thus, since $\frac{2}{t}<1$ and $\frac{2}{t}+\frac{2}{N}<1$, we get the claim. 
\end{proof}

%\texttt{ToDo: Formulate and prove this lemma also for $\phi_\rho^n$ with a constant independent on $n$.}

\begin{lemma}[Continuous dependence of $\phi_\rho$ on $\rho$] 
\label{continuous_dependance}
If $\rho_k, \rho \in P(\partial \Omega)$ with $\rho_k  \rightharpoonup\rho$ as $k\to \infty$ weakly in the sense of measures, then
\begin{equation}
\label{eqlimirk}
\lim_{k\to\infty} \ci_{\rho_k} (\phi_{\rho_k}) = \ci_{\rho} (\phi_{\rho})
\end{equation}
and $\phi_{\rho_k}\to \phi_\rho$ strongly in $D^{1,2}(\R^N)$ and locally uniformly on $\R^N$ as $k\to\infty$.
\end{lemma}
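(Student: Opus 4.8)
The plan is to combine the direct method of the calculus of variations with the uniqueness of the minimizer. First I would fix an arbitrary subsequence and use Lemma~\ref{estimate_for_minimizers} to get the uniform bound $\|\nabla\phi_{\rho_k}\|_2\le C$, so that $(\phi_{\rho_k})_k$ is bounded in $\X$. By Lemma~\ref{lemma21}(\ref{it:compact}) a further subsequence satisfies $\phi_{\rho_k}\rightharpoonup\bar\phi$ weakly in $D^{1,2}(\RN)$ and locally uniformly on $\RN$, with $\bar\phi\in\X$. Since $\Omega$ is bounded, $\partial\Omega$ is compact, and all the measures are supported there, so the locally uniform convergence is uniform on $\partial\Omega$. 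The whole strategy is to show $\bar\phi=\phi_\rho$ and then conclude by the usual subsequence principle that the full sequence converges, since the limit is independent of the chosen subsequence.

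The first delicate point is the \emph{double} limit hidden in $\langle\rho_k,\phi_{\rho_k}\rangle=\oint_{\partial\Omega}\phi_{\rho_k}\,d\rho_k$, where both the integrand and the measure vary. I would control it by the splitting
$$\oint_{\partial\Omega}\phi_{\rho_k}\,d\rho_k-\oint_{\partial\Omega}\bar\phi\,d\rho=\oint_{\partial\Omega}(\phi_{\rho_k}-\bar\phi)\,d\rho_k+\Bigl(\oint_{\partial\Omega}\bar\phi\,d\rho_k-\oint_{\partial\Omega}\bar\phi\,d\rho\Bigr),$$
where the first term is bounded by $\|\phi_{\rho_k}-\bar\phi\|_{L^\infty(\partial\Omega)}\to0$ (uniform convergence and total mass one), and the second tends to $0$ by weak convergence of measures tested against the fixed continuous function $\bar\phi$. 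Thus $\langle\rho_k,\phi_{\rho_k}\rangle\to\langle\rho,\bar\phi\rangle$, and likewise $\langle\rho_k,\psi\rangle\to\langle\rho,\psi\rangle$ for every fixed $\psi\in\X$. Combining the weak lower semicontinuity of the convex functional $\cj$ with the pairing limit gives $\ci_\rho(\bar\phi)\le\liminf_k\ci_{\rho_k}(\phi_{\rho_k})$, while minimality $\ci_{\rho_k}(\phi_{\rho_k})\le\ci_{\rho_k}(\psi)$ passes to the limit as $\limsup_k\ci_{\rho_k}(\phi_{\rho_k})\le\ci_\rho(\psi)$ for each fixed $\psi\in\X$. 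Taking $\psi=\phi_\rho$ squeezes the chain into equalities, so $\bar\phi$ minimizes $\ci_\rho$; by uniqueness $\bar\phi=\phi_\rho$, the limit \eqref{eqlimirk} holds, and the subsequence argument promotes this to convergence of the full sequence, both weakly in $D^{1,2}(\RN)$ and locally uniformly.

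The main work, and the step I expect to be the real obstacle, is upgrading the weak $D^{1,2}$-convergence to strong convergence. From \eqref{eqlimirk} and the pairing limit I first extract $\cj(\phi_{\rho_k})\to\cj(\phi_\rho)$. I would then write $\cj=\tfrac12\|\nabla\cdot\|_2^2+R$ with $R(\phi)=\irn\bigl(1-\sqrt{1-|\nabla\phi|^2}-\tfrac12|\nabla\phi|^2\bigr)\,dx$, and verify that the integrand $r(\xi)=1-\sqrt{1-|\xi|^2}-\tfrac12|\xi|^2$ is nonnegative, and convex and nondecreasing in $|\xi|$ on the unit ball (its radial profile has second derivative $(1-s^2)^{-3/2}-1\ge0$), so $R$ is a nonnegative convex integral functional and hence weakly lower semicontinuous. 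Since both $\tfrac12\|\nabla\cdot\|_2^2$ and $R$ are weakly lower semicontinuous and their sum converges to the sum of the two limits, each piece must converge separately; in particular $\|\nabla\phi_{\rho_k}\|_2\to\|\nabla\phi_\rho\|_2$. Norm convergence together with weak convergence in the Hilbert space of gradients then forces $\nabla\phi_{\rho_k}\to\nabla\phi_\rho$ in $L^2$, i.e.\ $\phi_{\rho_k}\to\phi_\rho$ strongly in $D^{1,2}(\RN)$.

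The crux is precisely this separation of $\cj$ into two individually weakly lower semicontinuous pieces: that is what turns the one-sided inequality $\|\nabla\phi_\rho\|_2\le\liminf_k\|\nabla\phi_{\rho_k}\|_2$ into an equality and yields strong convergence. The supporting convexity of $r$ is an elementary but essential one-variable computation, and the passage from convergence of the energy $\cj(\phi_{\rho_k})$ to convergence of the $L^2$-norm of the gradients is where the argument could fail if one only knew weak lower semicontinuity of $\cj$ as a whole.
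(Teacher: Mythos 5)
Your proof is correct, and its first half (uniform bound from Lemma~\ref{estimate_for_minimizers}, extraction of a weak and locally uniform limit, the pairing estimates on $\partial\Omega$, and the squeeze argument identifying $\bar\phi=\phi_\rho$ via uniqueness of the minimizer, hence \eqref{eqlimirk}) is essentially the paper's argument, with the added care of the Urysohn subsequence principle. Where you genuinely diverge is in upgrading weak to strong $D^{1,2}$-convergence. The paper expands $\cj$ as the series $\sum_{h}\frac{\alpha_h}{2h}\|\nabla\cdot\|_{2h}^{2h}$ and applies Clarkson's inequality in each $L^{2h}$, obtaining $\cj\bigl(\tfrac{\phi_{\rho_k}-\phi_\rho}{2}\bigr)+\cj\bigl(\tfrac{\phi_{\rho_k}+\phi_\rho}{2}\bigr)\le\tfrac12\bigl(\cj(\phi_{\rho_k})+\cj(\phi_\rho)\bigr)$; combined with weak lower semicontinuity and $\cj(\phi_{\rho_k})\to\cj(\phi_\rho)$ this forces $\cj\bigl(\tfrac{\phi_{\rho_k}-\phi_\rho}{2}\bigr)\to 0$, and the inequality $\tfrac12\tau\le 1-\sqrt{1-\tau}$ converts this into $\|\nabla(\phi_{\rho_k}-\phi_\rho)\|_2\to 0$. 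You instead split $\cj=\tfrac12\|\nabla\cdot\|_2^2+R$, where $R$ has the nonnegative convex integrand $r$ (your computation $r''(s)=(1-s^2)^{-3/2}-1\ge 0$ is right, and $\xi\mapsto g(|\xi|)$ with $g$ convex nondecreasing is indeed convex), then use the standard fact that convergence of a sum to the sum of the limits, together with weak lower semicontinuity of each summand, forces each piece to converge; the Radon--Riesz property of the Hilbert space of gradients finishes the job. Both arguments are sound. Yours is more elementary: it avoids Clarkson's inequalities and the series manipulation entirely, exploiting only that $\cj$ dominates the Dirichlet energy with a weakly lower semicontinuous excess. The paper's route buys a slightly stronger conclusion, namely $\cj\bigl(\tfrac{\phi_{\rho_k}-\phi_\rho}{2}\bigr)\to 0$, which controls all the $L^{2h}$-norms of the gradient differences at once, and it is the form of the argument that transfers verbatim to the approximated functionals of Section~\ref{sec:approx} (cf. Lemma~\ref{continuous_dependence_m}), where the leading term is $\|\nabla\cdot\|_{2n}^{2n}$ rather than a quadratic form --- although there, too, a Radon--Riesz argument in the uniformly convex space $L^{2n}$ would work.
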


\begin{proof}
The boundedness of $\{\phi_{\rho_k}\}_{k\in\N}$ (see Lemma \ref{estimate_for_minimizers}) and Lemma \ref{lemma21} imply the existence of a $\phi\in \X$ such that $\phi_{\rho_k} \rightharpoonup \phi $ weakly in $D^{1,2}(\R^N)$ and uniformly on compact sets as $k\to \infty$.
Then, since $\cj$ is convex and continuous on $\X$, it is weakly lower semicontinuous and so
\[
\cj (\phi)
\leq
\liminf_{k\to\infty} \cj (\phi_{\rho_k}).
\]
Moreover, as $k\to \infty$,
\begin{align}
%|\langle \rho_k, \phi_{\rho_k}-\phi\rangle| &\leq \|\phi_{\rho_k}-\phi\|_{L^\infty(\partial\Omega)} \to 0, \label{ineq_1}
%{\color{orange}\texttt{do we use this?}} \\
|\langle\rho_k - \rho, \phi_{\rho_k}\rangle| & \leq 2 \|\phi_{\rho_k}-\phi\|_{L^\infty(\partial\Omega)}+ |\langle\rho_k - \rho, \phi \rangle| \to 0, \label{ineq_2}\\
|\langle \rho_k, \phi_{\rho_k}\rangle - \langle \rho, \phi\rangle| & \leq |\langle \rho_k - \rho, \phi\rangle| +  \|\phi_{\rho_k}-\phi\|_{L^\infty(\partial\Omega)}  \to 0. \label{ineq_3}
\end{align}
%being $\langle\rho_k, \phi_{\rho_k}-\phi \rangle\to 0$ by the uniform convergence on the compact sets,
%%\footnote{
%%\[
%%\left|\int_{\partial\Omega}(\phi_{ \rho_k}-\phi_{  \rho})d\rho_k\right|
%%\leq
%%\|\phi_{ \rho_k}-\phi_{  \rho}\|_{L^\infty(\partial\Omega)}\rho_k(\partial\Omega)
%%=
%%\|\phi_{ \rho_k}-\phi_{  \rho}\|_{L^\infty(\partial\Omega)}
%%\to 0
%%\]
%%}
%$\langle\rho_k - \rho, \phi \rangle\to 0$ by the weak convergence $\rho_k  \rightharpoonup\rho$  in the sense of measures, and  $\langle\rho, \phi_{\rho_k}-\phi\rangle\to 0$ by the weak convergence $\phi_{\rho_k} \rightharpoonup \phi $ in $\X$, 
Thus,  using \eqref{ineq_2} and the fact that $\phi_{\rho_k}$ minimizes $\mathcal{I}_{\rho_k}$, we have
\begin{align*}
\ci_\rho(\phi)
&\leq
\liminf_{k\to\infty} \ci_\rho (\phi_{\rho_k})
=
\liminf_{k\to\infty} [\ci_{\rho_k} (\phi_{\rho_k}) + \langle\rho_k - \rho, \phi_{\rho_k}\rangle]
{=}
\liminf_{k\to\infty} \ci_{\rho_k} (\phi_{\rho_k}) \\
&\leq
\limsup_{k\to\infty} \ci_{\rho_k} (\phi_{\rho_k})
\le
\limsup_{k\to\infty} \ci_{\rho_k} (\phi_\rho)
=
\lim_{k\to\infty} \ci_{\rho_k} (\phi_{\rho})
=\ci_{\rho} (\phi_{\rho})
\end{align*}
that implies, by the uniqueness of the minimum, $\phi = \phi_{\rho}$ and \eqref{eqlimirk} holds. In particular,  using \eqref{ineq_3}, we have that
\begin{equation}
\label{eqllimj}
\lim_{k\to\infty} \cj (\phi_{\rho_k}) = \cj (\phi_{\rho}).
\end{equation}
On the other hand, 
%This implies the strong
%%weak convergence $\phi_{\rho_k}^m \rightharpoonup \phi_{\rho}^m $ in $X_m$ and \eqref{eqllimm} imply the strong 
%convergence $\phi_{\rho_k} \to \phi_{\rho}$ in $\X$. In fact 
by Clarkson's inequality \cite{hewitt_stromberg}, we have that
\[
\left\| \frac{\nabla \phi_{\rho_k} - \nabla \phi_{\rho}}{2} \right\|_{2h}^{2h}
+ \left\| \frac{\nabla \phi_{\rho_k} + \nabla \phi_{\rho}}{2} \right\|_{2h}^{2h}
\leq
\frac{1}{2} (\| \nabla \phi_{\rho_k} \|_{2h}^{2h} + \|  \nabla \phi_{\rho} \|_{2h}^{2h}),
\quad
h\in \N,
\]
and so
\[
\sum_{h=1}^{+\infty} \frac{\alpha_h}{2h} \left\| \frac{\nabla \phi_{\rho_k} - \nabla \phi_{\rho}}{2} \right\|_{2h}^{2h}
+ \sum_{h=1}^{+\infty} \frac{\alpha_h}{2h} \left\| \frac{\nabla \phi_{\rho_k} + \nabla \phi_{\rho}}{2} \right\|_{2h}^{2h}
\leq
\frac{1}{2} \sum_{h=1}^{+\infty} \frac{\alpha_h}{2h}  (\| \nabla \phi_{\rho_k} \|_{2h}^{2h} + \|  \nabla \phi_{\rho} \|_{2h}^{2h}),
\]
namely
\[
\cj\left(\frac{\phi_{\rho_k} - \phi_{\rho}}{2}\right)
+ \cj\left(\frac{\phi_{\rho_k} + \phi_{\rho}}{2}\right)
\leq
\frac{1}{2} (\cj(\phi_{\rho_k}) + \cj(\phi_{\rho})).
\]
By the weak lower semicontinuity of $\cj$ we see that
\[
\cj (\phi_\rho)
\leq
\liminf_{k\to\infty} \cj\left(\frac{\phi_{\rho_k} + \phi_{\rho}}{2}\right).
\]
Therefore, using \eqref{eqllimj}, we obtain
\begin{align*}
0
&\leq
\liminf_{k\to\infty} \cj\left(\frac{\phi_{\rho_k} - \phi_{\rho}}{2}\right)
\leq
\limsup_{k\to\infty} \cj\left(\frac{\phi_{\rho_k} - \phi_{\rho}}{2}\right)\\
&\leq
\limsup_{k\to\infty} \left[ \frac{1}{2} (\cj(\phi_{\rho_k} ) +\cj(\phi_{\rho})) 
- \cj\left(\frac{\phi_{\rho_k} + \phi_{\rho}}{2}\right)\right]\\
&=
\cj(\phi_{\rho}) 
- \liminf_{k\to\infty} \cj\left(\frac{\phi_{\rho_k} + \phi_{\rho}}{2}\right) 
\leq 0
\end{align*}
and thus
\begin{equation}
\label{limjdiff}
\lim_{k\to\infty} \cj\left(\frac{\phi_{\rho_k} - \phi_{\rho}}{2}\right) = 0.
\end{equation}
%{\color{red}Observe that
%\[
%\frac{\phi_{\rho_k} - \phi_{\rho}}{2},\frac{\phi_{\rho_k} + \phi_{\rho}}{2}\in \X.
%\]}
Hence \eqref{limjdiff} allows us to conclude since, by the inequality  $\frac{1}{2}\tau \leq 1-\sqrt{1-\tau} $, for $\tau\in[0,1]$, we have that for all $\phi\in \X$, $\|\nabla\phi\|_2^2 \leq 2 \cj(\phi)$.
\end{proof}

\begin{lemma}
\label{locate_max}
Let $\rho\in L^\infty(\R^N)$ have compact support  and let $\phi_\rho\in\mathcal{X}$ be the minimizer of $\mathcal{I}_\rho$. Suppose that $0<\max_{\R^N} \phi_\rho = \phi_\rho(x_0)$ for some point $x_0\in \R^N$. Then $x_0$ lies in $\supp\rho$ or $x_0$ lies in a bounded connected component of $\R^N\setminus\supp\rho$ where $\phi_\rho\equiv \phi_\rho(x_0)$.
In any case 
\begin{equation}
\supp\rho \cap \{x\in \R^N: \phi_\rho(x) = \max_{\R^N} \phi_\rho\} \not = \emptyset.
\label{not_empty}
\end{equation}
\end{lemma}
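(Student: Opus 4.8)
The plan is to use the fact that, off the support of $\rho$, the potential $\phi_\rho$ solves the homogeneous maximal surface equation, to which a strong maximum principle applies, so that the global maximum must be ``anchored'' on $\supp\rho$. Since $\rho\in L^\infty(\RN)$, by \cite[Theorem~1.5]{BDP} the minimizer $\phi_\rho$ is a strictly spacelike weak solution of \eqref{eq:BI}, hence $\phi_\rho\in C^1(\RN)$ with $|\nabla\phi_\rho|<1$ everywhere. On the open set $U:=\RN\setminus\supp\rho$ the datum vanishes, so $\phi_\rho$ weakly solves $\operatorname{div}\bigl(\nabla\phi_\rho/\sqrt{1-|\nabla\phi_\rho|^2}\bigr)=0$ there. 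Strict spacelikeness gives $|\nabla\phi_\rho|\le 1-\delta$ on each compact subset of $U$, so the operator is locally uniformly elliptic and, by the regularity theory of Bartnik and Simon \cite{BS}, $\phi_\rho\in C^\infty(U)$.

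Next I would invoke the strong maximum principle. In nondivergence form the homogeneous equation reads $\sum_{i,j} a_{ij}(\nabla\phi_\rho)\,\partial_{ij}\phi_\rho=0$, where $a_{ij}(p)=\delta_{ij}/\sqrt{1-|p|^2}+p_ip_j/(1-|p|^2)^{3/2}$ is positive definite for $|p|<1$. This is a linear elliptic equation with continuous coefficients and no zeroth-order term, so if $\phi_\rho$ attains an interior maximum at a point of $U$, then $\phi_\rho$ is constant on the connected component of $U$ containing that point.

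It remains to analyze the global maximum point $x_0$. If $x_0\in\supp\rho$, the first alternative of the statement holds and \eqref{not_empty} is immediate. Otherwise $x_0\in U$, and by the previous step $\phi_\rho\equiv\phi_\rho(x_0)=\max_{\RN}\phi_\rho>0$ on the connected component $V$ of $U$ containing $x_0$. This $V$ must be bounded: an unbounded component would contain a sequence $x_k$ with $|x_k|\to\infty$ along which $\phi_\rho$ equals the positive constant $\phi_\rho(x_0)$, contradicting $\lim_{|x|\to\infty}\phi_\rho(x)=0$ from Lemma~\ref{lemma21}~(\ref{it:C0}). This establishes the second alternative.

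Finally, since $V$ is a bounded connected component of the open set $U$, its boundary is nonempty and satisfies $\partial V\subset\supp\rho$ (a boundary point lying in $U$ would, by openness of the components of $U$, belong to $V$ itself). By continuity $\phi_\rho\equiv\phi_\rho(x_0)$ on $\overline V$, so every $y\in\partial V$ satisfies $y\in\supp\rho$ together with $\phi_\rho(y)=\max_{\RN}\phi_\rho$, which yields \eqref{not_empty}. The delicate point throughout is the passage from the degenerate operator to a locally uniformly elliptic one: strict spacelikeness is precisely what makes both the Bartnik--Simon regularity and the classical strong maximum principle available on compact subsets of $U$.
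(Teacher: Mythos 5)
Your proof is correct, and its skeleton coincides with the paper's: both start from \cite[Theorem~1.5]{BDP} to obtain that $\phi_\rho$ is a $C^1$, strictly spacelike weak solution of \eqref{eq:BI}, both observe that off $\supp\rho$ the equation becomes homogeneous and locally uniformly elliptic precisely because of strict spacelikeness, and both conclude via a strong maximum principle together with the decay $\phi_\rho(x)\to 0$ as $|x|\to\infty$ from Lemma~\ref{lemma21}. The one place where you diverge is the version of the maximum principle used. The paper stays at the level of the weak, divergence-form equation $\int_B a(x)\nabla\phi_\rho\cdot\nabla\psi\,dx=0$ with $a(x)=(1-|\nabla\phi_\rho(x)|^2)^{-1/2}\ge\delta>0$ on $B$, and invokes the strong maximum principle for $W^{1,2}$ weak solutions (\cite[Theorem~8.19]{GT}), which needs no regularity beyond $C^1$. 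You instead first upgrade $\phi_\rho$ to $C^\infty$ on $\R^N\setminus\supp\rho$ via Bartnik--Simon regularity and then apply the classical Hopf maximum principle to the nondivergence form $\sum_{i,j}a_{ij}(\nabla\phi_\rho)\,\partial_{ij}\phi_\rho=0$; this is valid (once the gradient is locally bounded away from $1$, the quasilinear equation is uniformly elliptic and smoothness follows), but it is extra machinery the paper avoids. What your route buys is a stronger intermediate conclusion (smoothness of $\phi_\rho$ off the support) and, more usefully, an explicit justification of \eqref{not_empty}: you verify that the component $V$ has nonempty boundary, that $\partial V\subset\supp\rho$, and that $\phi_\rho\equiv\max_{\R^N}\phi_\rho$ on $\overline{V}$ by continuity --- a step the paper leaves implicit in the phrase ``in any case''.
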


\begin{proof} Since $\rho\in L^\infty(\R^N)$ has compact support, we see that $\rho\in \X^\ast$. Therefore \cite[Theorem~1.5]{BDP} implies that $\phi_\rho$ is a weak, strictly spacelike solution of \eqref{eq:BI}, i.e. $\phi_\rho\in C^1(\R^N)\cap \mathcal{X}$, $|\nabla \phi_\rho|<1$ in $\R^N$ and \eqref{eq:BI_weak} holds.\\
Suppose $x_0 \not \in \supp\rho$. Then there exists an open ball $B$ centered at $x_0$ such that $\bar B\cap \supp\rho=\emptyset$ and 
\begin{equation}
\int_B a(x) \nabla\phi_\rho \cdot \nabla\psi\,dx = 0 \mbox{ for all } \psi\in C_c^\infty(B)
\label{elliptic}
\end{equation}
where $a(x)= (1-|\nabla\phi_\rho(x)|^2)^{-1/2}\geq \delta>0$ in $B$. Since $\phi_\rho$ attains its maximum on $\overline{B}$ at $x_0\in B$, the strong maximum principle (see e.g. \cite[Theorem~8.19]{GT}) applies. We then deduce that $\phi_\rho \equiv\const=\phi_\rho(x_0)$ on $\overline{B}$ and in fact on the entire connected component of $\R^N\setminus\supp\rho$ containing $x_0$. This connected component must be bounded since $\phi_\rho(x)\to 0$ as $|x|\to \infty$ by (\ref{it:C0}) in Lemma~\ref{lemma21}.
\end{proof}

To develop our theory further we next rely on the idea that one can ``mollify'' a measure as expressed in the following lemma. The proof is a straightforward consequence of the definition of the mollified measure.
\begin{lemma} \label{mollify_measure}
	Let $\mu \in P(\partial\Omega)$, $\eps>0$ and $\eta_\eps : \R^N\to [0,\infty)$ be the standard mollifier supported in $\overline{B}_\eps(0)$. Define the function $f_\eps\in L^\infty(\R^N)$ by
	$$
	f_\eps(x) := \oint_{\partial\Omega} \eta_\eps(y-x)\,d\mu_y, \quad x\in \R^N,
	$$
	and the measure $\mu_\eps \in P(\R^N)$ by 
	$$
	d \mu_\eps := f_\eps \,dx.
	$$
	Then $\mu_\eps\rightharpoonup \mu$ as $\eps \to 0$ in the sense of measures and $\supp \mu_\eps \subset \supp\mu + \overline{B}_\eps(0)$.
\end{lemma}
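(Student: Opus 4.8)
The plan is to verify the three assertions in turn, all of which follow from Tonelli's theorem together with the defining properties of the standard mollifier, namely $\eta_\eps\geq 0$, $\int_{\RN}\eta_\eps\,dx=1$, and $\supp\eta_\eps\subset \overline{B}_\eps(0)$. First I would check that $\mu_\eps$ is indeed a probability measure. Since $\eta_\eps\geq 0$ and $\mu$ is positive, $f_\eps\geq 0$, so $\mu_\eps$ is a positive measure; and by Tonelli,
\[
\int_{\RN} f_\eps(x)\,dx = \oint_{\partial\Omega}\Big(\int_{\RN}\eta_\eps(y-x)\,dx\Big)\,d\mu_y = \oint_{\partial\Omega} 1\,d\mu_y = 1,
\]
using that $x\mapsto\eta_\eps(y-x)$ has unit integral for each fixed $y$. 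Hence $\mu_\eps\in P(\RN)$.

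Next I would settle the support inclusion. If $f_\eps(x)>0$, then $\eta_\eps(y-x)>0$ for some $y\in\supp\mu$, which forces $|y-x|\le\eps$, i.e. $x\in \supp\mu+\overline{B}_\eps(0)$. Thus $\{f_\eps>0\}\subset \supp\mu+\overline{B}_\eps(0)$; since $\supp\mu$ is compact (a closed subset of the compact set $\partial\Omega$) and $\overline{B}_\eps(0)$ is compact, the Minkowski sum on the right is compact, hence closed, and therefore $\supp\mu_\eps=\overline{\{f_\eps>0\}}\subset \supp\mu+\overline{B}_\eps(0)$.

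For the weak convergence I would fix a bounded continuous test function $g$ and compute, again by Tonelli,
\[
\int_{\RN} g\,d\mu_\eps = \oint_{\partial\Omega}\Big(\int_{\RN} g(x)\eta_\eps(y-x)\,dx\Big)\,d\mu_y = \oint_{\partial\Omega}(g*\eta_\eps)(y)\,d\mu_y,
\]
where the change of variables $x\mapsto y-x$ identifies the inner integral with the mollification $(g*\eta_\eps)(y)$. Since $\mu$ is supported on the compact set $\partial\Omega$, it suffices to know that $g*\eta_\eps\to g$ uniformly on a fixed compact neighborhood of $\partial\Omega$; this is the standard uniform convergence of mollifications of a continuous function (which is uniformly continuous on that neighborhood), following from $|(g*\eta_\eps)(y)-g(y)|\le \sup_{|z|\le\eps}|g(y-z)-g(y)|$. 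Combined with $\oint_{\partial\Omega} d\mu=1$, this gives
\[
\int_{\RN} g\,d\mu_\eps \to \oint_{\partial\Omega} g\,d\mu = \int_{\RN} g\,d\mu \quad\text{as }\eps\to0,
\]
which is exactly $\mu_\eps\rightharpoonup\mu$.

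The argument is essentially bookkeeping, and there is no serious obstacle; the only points requiring a modicum of care are the applicability of Tonelli's theorem (guaranteed by the nonnegativity of the integrand) and the uniform—rather than merely pointwise—convergence of $g*\eta_\eps$ near $\partial\Omega$, for which the compactness of $\supp\mu$ is precisely what legitimizes the passage to the limit under the integral sign.
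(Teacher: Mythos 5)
Your proof is correct and is precisely the routine verification the paper has in mind: the paper gives no argument at all beyond remarking that the lemma ``is a straightforward consequence of the definition of the mollified measure,'' and your three steps (normalization via Tonelli, support inclusion, weak convergence via $\int g\,d\mu_\eps=\oint_{\partial\Omega}(g*\eta_\eps)\,d\mu$ and uniform convergence of mollifications on a compact neighborhood of $\partial\Omega$) are exactly that omitted bookkeeping. One small correction: in the weak-convergence step the integrand $g(x)\eta_\eps(y-x)$ need not be nonnegative, so the interchange of integrals there should be justified by Fubini (using that Tonelli applied to $|g(x)|\eta_\eps(y-x)$ gives absolute integrability), not by Tonelli directly as your closing remark suggests.
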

%
%\begin{proof}Let $\phi\in C_B(\R^N)$ be a bounded continuous function. Then $\phi_\eps(y) := \int_{\R^N} \phi(x) \eta_\eps(y-x)\,dx \to \phi(y)$ locally uniformly w.r.t. $y$ on $\R^N$. Therefore, by Fubini and uniform convergence on $\partial\Omega$ we see that  
%\begin{align*}
%\lim_{\eps \to 0} \langle \mu_\eps,\phi\rangle & = 
%\lim_{\eps\to 0} \int_{\R^N} \phi\, d\mu_\eps 
%%\\&
%= \lim_{\eps\to 0} \int_{\R^N} \oint_{\partial\Omega} \phi(x) \eta_\eps(y-x)\,d\mu_y\,dx 
%\\&
%= \lim_{\eps\to 0} \oint_{\partial\Omega} \int_{\R^n} \phi(x) \eta_\eps(y-x)\,dx \,d\mu_y 
%%\\&
% = \lim_{\eps\to 0} \oint_{\partial\Omega} \phi_\eps(y) d\mu_y \\
%& = \oint_{\partial\Omega} \phi\,d\mu
%%\\&
% = \langle \mu,\phi\rangle.
%\end{align*}
%Also, if $y\in \partial\Omega$ and $x\in \R^N$ with $\dist(x,\partial\Omega)>\eps$ then $\eta_\eps(y-x)=0$. Hence, if $\phi\in C_B(\R^N)$ is such that $\dist(\supp\phi, \supp\mu)>\eps$ then  
%\begin{align*}
%\langle \mu_\eps, \phi\rangle & = \int_{\R^N} \phi d\mu_\eps
%%\\& 
%= \int_{\R^N} \oint_{\partial\Omega} \phi(x) \eta_\eps(y-x)\,d\mu_y\,dx\\
%& = \int_{\supp\phi}\oint_{\supp \mu} \phi(x) \eta_\eps(y-x)\,d\mu_y\,dx =0.
%\end{align*}
%This shows that $\supp \mu_\eps \subset \supp\mu + \overline{B}_\eps(0)$ and finishes the proof.
%
%%\todo[inline]{Find a reference ....maybe Walter Rudin (1962), Fourier analysis on groups}
%\end{proof}

 We also use the following lemma on the location of maxima of locally uniformly convergent sequences of continuous functions. A proof by contradiction is again straightforward.

\begin{lemma}
\label{lemmamax} 
Let $\{\phi_k\}_{k\in\N}$ be a sequence of continuous functions  on $\R^N$  that converges locally uniformly to $\phi$ and suppose that $T:=\{x\in \R^N:\phi(x) = \max_{\R^N} \phi\}$ is compact and that for all $k\in \N$ the sets $T_k := \{x\in \R^N: \phi_k(x) = \max_{\R^N} \phi_k\}$ are contained in a fixed compact set $K$.
For any given $\delta>0$ and sufficiently large $k$ the function $\phi_k$ attains its maximum only at points in $T+\overline{B}_\delta(0)$. 
\end{lemma}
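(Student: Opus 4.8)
The plan is to argue by contradiction. Suppose the conclusion fails for some fixed $\delta>0$; then along a subsequence (which I relabel as $\phi_k$) there are maximum points $x_k\in T_k$ lying outside $T+\overline{B}_\delta(0)$, i.e.\ with $\dist(x_k,T)>\delta$. The first step is to exploit the hypothesis that all the sets $T_k$ are contained in a single fixed compact set $K$: this confines the $x_k$ to $K$, so after passing to a further subsequence they converge to a limit $x_*\in K$. By continuity of $x\mapsto \dist(x,T)$ the strict inequality passes to the (non-strict) limit $\dist(x_*,T)\ge \delta>0$, so $x_*\notin T$ and therefore $\phi(x_*)<M$, where $M:=\max_{\R^N}\phi$.

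The second step is to pass to the limit in the defining inequality of a maximum. Fix any $x_0\in T$, so that $\phi(x_0)=M$ (this uses that $T\ne\emptyset$, which is implicit in writing $\max_{\R^N}\phi$). Since each $x_k\in T_k$ is a maximizer of $\phi_k$, we have $\phi_k(x_k)=\max_{\R^N}\phi_k\ge \phi_k(x_0)$ for every $k$. On the right-hand side, pointwise convergence at the fixed point $x_0$ gives $\phi_k(x_0)\to \phi(x_0)=M$. On the left-hand side I would combine the uniform convergence $\phi_k\to\phi$ on the compact set $K$ with the continuity of $\phi$ to obtain $\phi_k(x_k)\to\phi(x_*)$, through the split $|\phi_k(x_k)-\phi(x_*)|\le \sup_K|\phi_k-\phi| + |\phi(x_k)-\phi(x_*)|$, where both terms tend to $0$. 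Taking limits in the inequality then yields $\phi(x_*)\ge M$, contradicting $\phi(x_*)<M$, which closes the argument.

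The only point requiring care, and the nearest thing to an obstacle, is the left-hand limit: one must \emph{not} attempt to use pointwise convergence at the moving arguments $x_k$ (which can fail for a merely locally uniformly convergent sequence), but instead uniform convergence on $K$ together with the fact that both $x_k$ and $x_*$ lie in $K$. This is precisely where the hypothesis $T_k\subset K$ is indispensable: it both allows the extraction of a convergent subsequence and upgrades local uniform convergence to genuine uniform convergence on the set where the maximizers live. The compactness of $T$ plays only a supporting role, guaranteeing that $T+\overline{B}_\delta(0)$ is closed with complement $\{\dist(\cdot,T)>\delta\}$ and that $\dist(\cdot,T)$ is a well-defined continuous function; the remaining estimates are routine.
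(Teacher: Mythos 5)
Your proof is correct, and it is exactly the argument the paper has in mind: the paper gives no written proof for this lemma, remarking only that ``a proof by contradiction is again straightforward,'' and your contradiction argument (extracting a convergent subsequence of maximizers in $K$, then comparing $\phi_k(x_k)\ge\phi_k(x_0)$ in the limit via uniform convergence on $K$) is precisely that straightforward argument, with the one delicate point—avoiding pointwise convergence at moving arguments—handled correctly.
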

%\begin{proof} First note, that the uniform convergence on $K$ implies $\max_{\R^N} \phi_k \to \max_{\R^N} \phi$ as $k \to \infty$. Let ${\mathcal T}_\delta := T+\overline{B}_\delta(0)$. Suppose the conclusion of the theorem does not hold. Then for a subsequence $k_l\to \infty$ there exist points $y_{k_l}$ such that $y_{k_l}\not \in  \mathcal{T}_\delta$ but $\max_{\R^N} \phi_{k_l} = \phi_{k_l}(y_{k_l})$. Taking convergent subsequences $y_{k_{l_m}}\to y$ we find that 
%$\phi(y) = \lim_{m\to \infty} \phi_{k_{l_m}} (y_{k_{l_m}}) = \max_{\R^N} \phi$
%and hence $y\in  T$. This contradicts $y_{k_{l_m}}\not \in  {\mathcal T}_\delta$.
%\end{proof}

As a consequence of the previous lemmas, we can now deduce that for any $\rho \in P(\de \O)$, $\phi_\rho$   attains its maximum value on the boundary of $\Omega$.

\begin{lemma}\label{maxmax} 
For any $\rho \in P(\de \O)$, we have that $\max_{\R^n} \phi_{\rho} = \max_{\partial\Omega} \phi_{\rho}$. 
\end{lemma}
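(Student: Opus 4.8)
The plan is to approximate $\rho$ by smooth, compactly supported densities whose supports shrink onto $\partial\Omega$, to locate a maximum point of each approximate potential near $\partial\Omega$ via Lemma~\ref{locate_max}, and then to pass to the limit. Since $\max_{\partial\Omega}\phi_\rho\le\max_{\R^N}\phi_\rho$ trivially, it suffices to produce a maximum point of $\phi_\rho$ that lies on $\partial\Omega$. We may assume $\max_{\R^N}\phi_\rho>0$: otherwise $\phi_\rho\equiv0$ (recall $\phi_\rho\ge0$) and there is nothing to prove. As $\phi_\rho$ is continuous and vanishes at infinity, its maximum is attained and $T:=\{x:\phi_\rho(x)=\max_{\R^N}\phi_\rho\}$ is compact.

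Fix $\eps_k\downarrow0$ and let $\rho_k:=\rho_{\eps_k}$ be the mollified measures of Lemma~\ref{mollify_measure}, so that $\rho_k\in L^\infty(\R^N)$ has compact support contained in $\partial\Omega+\overline B_{\eps_k}(0)$ and $\rho_k\rightharpoonup\rho$ in the sense of measures. All these supports lie in the fixed compact set $K_0:=\partial\Omega+\overline B_1(0)$, so the argument proving Lemma~\ref{continuous_dependance} carries over unchanged (the duality pairings are now estimated by $\|\phi_{\rho_k}-\phi_\rho\|_{L^\infty(K_0)}$ in place of the norm on $\partial\Omega$), yielding $\phi_{\rho_k}\to\phi_\rho$ strongly in $D^{1,2}(\R^N)$ and locally uniformly on $\R^N$.

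Because each $\rho_k\in L^\infty(\R^N)$ is compactly supported, Lemma~\ref{locate_max} applies to $\phi_{\rho_k}$. Evaluating the local uniform limit at a point of $T$ shows $\max_{\R^N}\phi_{\rho_k}>0$ for large $k$, so \eqref{not_empty} furnishes a maximum point $y_k$ of $\phi_{\rho_k}$ with $y_k\in\supp\rho_k$, whence $\dist(y_k,\partial\Omega)\le\eps_k\to0$. Moreover every maximizer of $\phi_{\rho_k}$ lies either in $\supp\rho_k$ or in a bounded component of $\R^N\setminus\supp\rho_k$, hence in a fixed ball $\overline B_M\supset K_0$. Consequently $\max_{\R^N}\phi_{\rho_k}=\max_{\overline B_M}\phi_{\rho_k}\to\max_{\overline B_M}\phi_\rho\le\max_{\R^N}\phi_\rho$ by local uniform convergence, while pointwise convergence gives $\liminf_k\max_{\R^N}\phi_{\rho_k}\ge\phi_\rho(x)$ for every $x$, and therefore $\liminf_k\max_{\R^N}\phi_{\rho_k}\ge\max_{\R^N}\phi_\rho$. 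Combining the two shows $\max_{\R^N}\phi_{\rho_k}\to\max_{\R^N}\phi_\rho$.

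Since $y_k\in\overline B_M$, a subsequence converges to some $y_\ast\in\partial\Omega$ (using $\dist(y_k,\partial\Omega)\to0$ and closedness of $\partial\Omega$). Then $\phi_{\rho_k}(y_k)\to\phi_\rho(y_\ast)$ by local uniform convergence together with continuity of $\phi_\rho$, while $\phi_{\rho_k}(y_k)=\max_{\R^N}\phi_{\rho_k}\to\max_{\R^N}\phi_\rho$; hence $\phi_\rho(y_\ast)=\max_{\R^N}\phi_\rho$ with $y_\ast\in\partial\Omega$, which is exactly $\max_{\R^N}\phi_\rho=\max_{\partial\Omega}\phi_\rho$. (Equivalently, once the global maxima converge and are confined to $\overline B_M$, Lemma~\ref{lemmamax} with $K=\overline B_M$ forces every $y_k$ into $T+\overline B_\delta(0)$ for large $k$, and letting $\delta\to0$ again places a maximizer of $\phi_\rho$ on $\partial\Omega$.) The delicate point is not the mollification itself but the upgrade from local uniform convergence to convergence of the \emph{global} maxima and of their locations; this is precisely where the uniform confinement of all maximizers to the fixed compact set $\overline B_M$, furnished by Lemma~\ref{locate_max}, is indispensable.
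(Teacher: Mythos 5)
Your proof is correct, and it shares the paper's key ingredients (the mollification Lemma~\ref{mollify_measure}, Lemma~\ref{locate_max} via \eqref{not_empty}, and the locally uniform convergence $\phi_{\rho_{\eps}}\to\phi_\rho$), but it closes the argument by a genuinely different route. The paper argues by contradiction: assuming $\max_{\R^N}\phi_\rho>\max_{\partial\Omega}\phi_\rho$, it uses Lemma~\ref{lemmamax} to force all maximizers of $\phi_{\rho_\eps}$ into $T+\overline{B}_\delta(0)$, which is disjoint from $\supp\rho_\eps$, contradicting \eqref{not_empty}. You instead argue directly: \eqref{not_empty} hands you a maximizer $y_k\in\supp\rho_k$ with $\dist(y_k,\partial\Omega)\le\eps_k$, you upgrade locally uniform convergence to convergence of the global maxima (using the confinement of all maximizers to a fixed ball $\overline{B}_M$, exactly as the paper does), and then pass to the limit along $y_k\to y_\ast\in\partial\Omega$. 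This bypasses Lemma~\ref{lemmamax} entirely and yields slightly more information along the way, namely $\max_{\R^N}\phi_{\rho_k}\to\max_{\R^N}\phi_\rho$ and convergence of maximizing points. A further merit of your write-up is that you explicitly justify applying the continuous-dependence argument to the mollified measures, which are \emph{not} elements of $P(\partial\Omega)$ (their supports lie in $\partial\Omega+\overline{B}_{\eps_k}(0)$), by noting that the proof of Lemma~\ref{continuous_dependance} only needs the supports to lie in a fixed compact set; the paper uses this extension implicitly without comment. The two minor points you leave tacit --- attainment of the maximum of $\phi_{\rho_k}$ (continuity, positivity somewhere, and decay at infinity) and the fact that bounded components of $\R^N\setminus\supp\rho_k$ lie in $\overline{B}_M$ (connectedness of the exterior of a ball for $N\ge3$) --- are at the same level of detail as the paper itself and are easily supplied.
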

\begin{proof} Suppose that $M := \max_{\R^N} \phi_{\rho} > \max_{\partial\Omega} \phi_{\rho}$ and define the compact set $ T= \{x\in \R^N: \phi_{\rho}(x)=M\}$.
	Clearly $T\cap \partial\Omega=\emptyset$ and $\dist( T,\partial\Omega)>0$. By Lemma~\ref{mollify_measure}, let $\eps_0, \delta >0$ be so small that the mollified measure 
	$  \rho_\eps$ satisfies $\dist(\supp   \rho_\eps,  T+\overline{B}_\delta(0))>0$ for all $\eps\in (0,\eps_0]$. 
 Note that $\supp\rho_\eps\subset \partial\Omega+\overline{B}_\eps(0)\subset \overline{B}_R(0)$ for all $\eps\in (0,\eps_0]$ and a suitable $R>0$. Therefore, if a point $x$ belongs to a bounded component of $\R^N\setminus\supp\rho_\eps$ then $|x|\leq R$. By Lemma~\ref{locate_max}, applied to $\rho_\eps$ and $\phi_{\rho_\eps}$, we find that $\phi_{\rho_\eps}$ has its maximum points all within the set $\overline{B}_R(0)$ for all $\eps\in (0,\eps_0]$.   
	Since $\phi_{  \rho_\eps}\to \phi_{  \rho}$ as $\eps\to 0$ locally uniformly on $\R^N$, by Lemma~\ref{lemmamax} we know that for small $\eps>0$ the function $\phi_{\rho_\eps}$ attains its maximum over $\R^N$ only at  points $y_\eps\in T +\overline{B}_\delta(0)$. However, since $T+\overline{B}_\delta(0)$ and $\supp\rho_\eps$ are disjoint we get a contradiction to \eqref{not_empty} from Lemma~\ref{locate_max}. 
%	\sout{the second alternative of Lemma~\ref{locate_max} states that $\phi_{\rho_\eps}$ attains its maximum everywhere in a bounded open connected set containing $S+\overline{B}_\delta(0)$ {\color{green} and so, by continuity, also on $\supp \rho_\eps$}.}
This contradiction finishes the proof.

\end{proof}

\section{Equilibrium measures and equilibrium potentials: existence and properties}\label{se:equi}

In this section, we prove our main results. We begin with existence of equilibrium measures (Theorem~\ref{th:exis}), their properties (Theorem~\ref{main}) and the uniqueness of equilibrium potentials (Corollary~\ref{cor:unique_potential}). Then, in case of $C^{2,\alpha}$-smooth domains we show that the equilibrium potential solves \eqref{eq:BI} (Theorem~\ref{main2}), we give a characterization (Proposition~\ref{pr:rhol}) and address the uniqueness questions for the equilibrium measure (Corollary~\ref{cor:unique_measure}, Proposition~\ref{pr:unici}). 

\begin{theorem}\label{th:exis}
There exists an  equilibrium measure $\rho^\ast$ for $\mathcal{E}$.
\end{theorem}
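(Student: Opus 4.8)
The plan is to apply the direct method of the calculus of variations to the map $\rho\mapsto\E(\phi_\rho)$ over $P(\de\O)$, using the continuous dependence already established in Lemma~\ref{continuous_dependance} as the crucial compactness/continuity input.

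First I would record that the infimum is well defined and bounded below. Since $\phi_\rho$ minimizes $\ci_\rho$ and $\ci_\rho(0)=\cj(0)=0$, we have $\ci_\rho(\phi_\rho)\le 0$, hence $\E(\phi_\rho)=-\ci_\rho(\phi_\rho)\ge 0$ for every $\rho\in P(\de\O)$. Thus $m:=\inf_{\rho\in P(\de\O)}\E(\phi_\rho)\in[0,\infty)$ is finite, and I may choose a minimizing sequence $\{\rho_k\}_{k\in\N}\subset P(\de\O)$ with $\E(\phi_{\rho_k})\to m$.

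Next I would extract a convergent subsequence in $P(\de\O)$. Because $\O$ is bounded, $\de\O$ is compact, and the space of Borel probability measures on a compact metric space is sequentially compact for the weak-$*$ topology (Prokhorov / Banach--Alaoglu); moreover the weak-$*$ limit of probability measures supported on the fixed compact set $\de\O$ is again a probability measure supported on $\de\O$, since positivity is preserved under weak limits and testing against the constant function $1$ gives total mass $1$. Hence, up to a subsequence, $\rho_k\wto\rho^\ast$ weakly in the sense of measures with $\rho^\ast\in P(\de\O)$.

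Finally I would invoke Lemma~\ref{continuous_dependance}: since $\rho_k\wto\rho^\ast$, it yields $\ci_{\rho_k}(\phi_{\rho_k})\to\ci_{\rho^\ast}(\phi_{\rho^\ast})$, and therefore $\E(\phi_{\rho_k})=-\ci_{\rho_k}(\phi_{\rho_k})\to-\ci_{\rho^\ast}(\phi_{\rho^\ast})=\E(\phi_{\rho^\ast})$. Comparing with $\E(\phi_{\rho_k})\to m$ gives $\E(\phi_{\rho^\ast})=m=\inf_{\rho\in P(\de\O)}\E(\phi_\rho)$, so $\rho^\ast$ is the desired equilibrium measure. The only genuine difficulty, namely that $\rho\mapsto\phi_\rho$ is continuous enough for the energy to pass to the limit along weakly convergent measures, has already been absorbed into Lemma~\ref{continuous_dependance}; what remains here is just the compactness of $P(\de\O)$ and the assembly of these pieces.
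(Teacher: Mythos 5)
Your proof is correct and follows essentially the same route as the paper: nonnegativity of the energy (the paper cites \cite[Proposition~2.3]{BDP} for $\ci_\rho(\phi_\rho)<0$, while you compare with $\phi=0$, which is equally valid), then a minimizing sequence, tightness/weak-$*$ compactness of $P(\de\O)$ from compactness of $\de\O$, and finally Lemma~\ref{continuous_dependance} to pass the energy to the limit. No gaps; your extra remarks on why the weak limit stays a probability measure supported on $\de\O$ are a harmless elaboration of what the paper leaves implicit.
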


\begin{proof} 
Recall from \cite[Proposition~2.3]{BDP} that  $\ci_\rho (\phi_{\rho})=-\mathcal{E}(\phi_{\rho})<0$.  Hence we infer that
\[
\inf_{\rho \in P(\partial\Omega)} \mathcal{E} (\phi_\rho)\ge 0.
\]
Let $\{\rho_k\}_{k\in\N}$ be a minimizing sequence. Since $\partial\Omega$ is compact, the sequence $\{\rho_k\}_{k\in\N}$ is tight and hence there exists $\rho^\ast\in P(\partial\Omega)$ such that $\rho_k \rightharpoonup \rho^\ast$ (see \cite{prob}). By  Lemma~\ref{continuous_dependance} we get that 
$$\mathcal{E}(\phi_{\rho^\ast}) = \lim_{k\to\infty} \mathcal{E}(\phi_{\rho_k})= \inf_{\rho \in P(\partial\Omega)} \mathcal{E} (\phi_\rho)
$$
so that $\rho^\ast$ is an  equilibrium measure for $\mathcal{E}$.
\end{proof}

From now on,  we denote by $\rho^*$ an equilibrium measure. It satisfies the following properties.

\begin{proposition}\label{cacca}
For every $\mu \in P(\partial\Omega)$ we have the inequality
\begin{equation}
\label{ineq_equilibrium}
\lambda ^\ast := \langle  \rho ^\ast,\phi_{\rho ^\ast} \rangle \leq \langle  \mu,\phi_{\rho ^\ast} \rangle.
\end{equation}
\end{proposition}
\begin{proof}
Let $\mu$ be an arbitrary measure in $P(\partial\Omega)$.  For $t\in [0,1]$, define the probability measure $\rho_t := (1-t)\rho^{\ast}+ t \mu$.   Then
\[
\mathcal{E}(\phi_{\rho_t})-\mathcal{E}(\phi_{\rho^{\ast}})
=
\ci_{\rho^*}(\phi_{  \rho^\ast})
-\ci_{\rho^*}(\phi_{  \rho_t})
+t\langle \mu-\rho^{\ast},\phi_{\rho_t}\rangle
\leq
t\langle \mu-\rho^{\ast},\phi_{\rho_t}\rangle
\]
and
\[
\mathcal{E}(\phi_{\rho_t})-\mathcal{E}(\phi_{\rho^{\ast}})
=
-\ci_{\rho_t}(\phi_{  \rho_t})
+\ci_{\rho_t}(\phi_{  \rho^*})
+t\langle \mu-\rho^{\ast},\phi_{\rho^*}\rangle
\geq
t\langle \mu-\rho^{\ast},\phi_{\rho^*}\rangle,
\]
 for $t\in(0,1]$ imply that
\[
\langle \mu-\rho^{\ast},\phi_{\rho^{\ast}}\rangle
\leq
\frac{\mathcal{E}(\phi_{\rho_t})-\mathcal{E}(\phi_{\rho^{\ast}})}{t} 
\leq
\langle \mu-\rho^{\ast},\phi_{\rho_t}\rangle.
\]
Passing to the limit as $t\to 0^+$ and using the Lemma~\ref{continuous_dependance} we get that 
$$
\lim_{t \to 0}\frac{\mathcal{E}(\phi_{\rho_t})-\mathcal{E}(\phi_{\rho^{\ast}})}{t} \mbox{ exists and } = \langle \mu-\rho^{\ast},\phi_{\rho^{\ast}}\rangle. 
$$
Finally, since $\mathcal{E}(\phi_{\rho_t})-\mathcal{E}(\phi_{\rho^{\ast}})\geq 0$, we obtain the claimed inequality \eqref{ineq_equilibrium}.
\end{proof}

\begin{lemma} \label{ae_wrt_measure}
For every $x\in \partial \Omega$ we have $\phi_{\rho ^\ast}(x) \geq \lambda ^\ast$. Moreover $\phi_{\rho ^\ast} = \lambda ^\ast$ a.e. with respect to $\rho^\ast $ on $\partial\Omega$. 
\end{lemma}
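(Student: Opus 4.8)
The plan is to exploit the variational inequality \eqref{ineq_equilibrium} from Proposition~\ref{cacca} by testing it against Dirac measures, and then to combine the resulting pointwise bound with the normalization $\oint_{\partial\Omega} d\rho^\ast = 1$.

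First I would establish the pointwise lower bound. Fix $x\in\partial\Omega$ and take $\mu = \delta_x$, the Dirac measure concentrated at $x$. Since $x\in\partial\Omega$ we have $\delta_x\in P(\partial\Omega)$, so \eqref{ineq_equilibrium} is applicable with this choice of $\mu$. Because the minimizer $\phi_{\rho^\ast}\in\X\subset C^{0,1}(\R^N)$ is continuous, the duality bracket evaluates as $\langle\delta_x,\phi_{\rho^\ast}\rangle = \phi_{\rho^\ast}(x)$, and \eqref{ineq_equilibrium} then reads $\lambda^\ast \le \phi_{\rho^\ast}(x)$. As $x\in\partial\Omega$ was arbitrary, this yields $\phi_{\rho^\ast}\ge\lambda^\ast$ everywhere on $\partial\Omega$, which is the first assertion.

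For the second assertion I would use the definition $\lambda^\ast = \langle\rho^\ast,\phi_{\rho^\ast}\rangle = \oint_{\partial\Omega}\phi_{\rho^\ast}\,d\rho^\ast$ together with $\oint_{\partial\Omega} d\rho^\ast = 1$. Subtracting gives $\oint_{\partial\Omega}(\phi_{\rho^\ast}-\lambda^\ast)\,d\rho^\ast = \lambda^\ast - \lambda^\ast = 0$. By the first part the integrand $\phi_{\rho^\ast}-\lambda^\ast$ is nonnegative on $\partial\Omega$, so a nonnegative function whose integral against the positive measure $\rho^\ast$ vanishes must itself vanish $\rho^\ast$-almost everywhere. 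Hence $\phi_{\rho^\ast} = \lambda^\ast$ a.e. with respect to $\rho^\ast$ on $\partial\Omega$.

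I do not anticipate a genuine obstacle here, as both steps are short. The only point deserving care is the evaluation $\langle\delta_x,\phi_{\rho^\ast}\rangle = \phi_{\rho^\ast}(x)$: this is legitimate precisely because $\phi_{\rho^\ast}$ is continuous (indeed Lipschitz), which is what makes the statement \emph{for every} $x\in\partial\Omega$ meaningful rather than merely almost everywhere, and also guarantees that $\delta_x$ lies in $\X^\ast$ so that the pairing is well defined.
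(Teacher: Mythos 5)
Your proof is correct and follows essentially the same route as the paper: testing the variational inequality \eqref{ineq_equilibrium} with $\mu=\delta_x$ to get the pointwise bound, then using the normalization of $\rho^\ast$ and the definition of $\lambda^\ast$ to conclude that the nonnegative function $\phi_{\rho^\ast}-\lambda^\ast$ has vanishing integral against $\rho^\ast$, hence vanishes $\rho^\ast$-a.e. Your extra remark that continuity of $\phi_{\rho^\ast}$ justifies the evaluation $\langle\delta_x,\phi_{\rho^\ast}\rangle=\phi_{\rho^\ast}(x)$ is a welcome, if implicit in the paper, point of care.
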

\begin{proof} If we take $\mu=\delta_x$ for $x\in \partial\Omega$ and insert this measure into \eqref{ineq_equilibrium}, we get $\phi_{\rho ^\ast}(x)\geq \lambda ^\ast$. Moreover, by the definition of $\lambda ^\ast$ we have
$$
0 = \oint_{\partial\Omega} \underbrace{(\phi_{\rho ^\ast}(x)-\lambda ^\ast )}_{\geq 0} \ d\rho ^\ast
$$
which implies $\phi_{\rho ^\ast}=\lambda ^\ast$ a.e. with respect to the measure $\rho ^\ast$ on $\partial\Omega$. 
\end{proof}

\begin{theorem} \label{main}
For every $x\in\overline{\Omega}$ we have $\phi_{\rho ^\ast}(x)=\lambda ^\ast$. 
\end{theorem}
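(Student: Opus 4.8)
The statement upgrades the one-sided information of Lemma~\ref{ae_wrt_measure} (namely $\phi_{\rho^\ast}\ge\lambda^\ast$ on $\partial\Omega$, with equality $\rho^\ast$-a.e.) to the full equality $\phi_{\rho^\ast}\equiv\lambda^\ast$ on $\overline{\Omega}$. My plan is to prove the matching upper bound $\phi_{\rho^\ast}\le\lambda^\ast$ and then flatten the interior by a competitor argument. First I record that $\phi_{\rho^\ast}\equiv\lambda^\ast$ on $\supp\rho^\ast$: since $\phi_{\rho^\ast}$ is continuous and equals $\lambda^\ast$ $\rho^\ast$-a.e., the closed set $\{\phi_{\rho^\ast}=\lambda^\ast\}$ has full $\rho^\ast$-mass and therefore contains $\supp\rho^\ast$; in particular $\max_{\supp\rho^\ast}\phi_{\rho^\ast}=\lambda^\ast$.

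The heart of the proof is the maximum principle $\max_{\R^N}\phi_{\rho^\ast}=\max_{\supp\rho^\ast}\phi_{\rho^\ast}$, which then reads $\phi_{\rho^\ast}\le\lambda^\ast$ on $\R^N$. I would obtain it by rerunning the argument of Lemma~\ref{maxmax} with $\supp\rho^\ast$ in place of $\partial\Omega$. Assume for contradiction that $M:=\max_{\R^N}\phi_{\rho^\ast}>\lambda^\ast$; the level set $T=\{\phi_{\rho^\ast}=M\}$ is compact (as $\phi_{\rho^\ast}\to0$ at infinity and $M>0$) and lies at positive distance from $\supp\rho^\ast$. Mollifying $\rho^\ast$ to some $\rho^\ast_\eps\in L^\infty$ with $\supp\rho^\ast_\eps\subset\supp\rho^\ast+\overline B_\eps(0)$ via Lemma~\ref{mollify_measure}, Lemma~\ref{locate_max} (through \eqref{not_empty}) yields a global maximum point of $\phi_{\rho^\ast_\eps}$ inside $\supp\rho^\ast_\eps$ and confines all of its maxima to a fixed ball; since $\phi_{\rho^\ast_\eps}\to\phi_{\rho^\ast}$ locally uniformly (Lemma~\ref{continuous_dependance}), Lemma~\ref{lemmamax} forces every maximum of $\phi_{\rho^\ast_\eps}$ into $T+\overline B_\delta(0)$ for small $\eps$, which is disjoint from $\supp\rho^\ast_\eps$ --- a contradiction. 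Combining $\phi_{\rho^\ast}\le\lambda^\ast$ with Lemma~\ref{ae_wrt_measure} gives $\phi_{\rho^\ast}=\lambda^\ast$ on $\partial\Omega$.

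It remains to flatten the interior. Set $\tilde\phi:=\lambda^\ast$ on $\overline{\Omega}$ and $\tilde\phi:=\phi_{\rho^\ast}$ on $\R^N\setminus\overline{\Omega}$. Because $\phi_{\rho^\ast}=\lambda^\ast$ on $\partial\Omega$, the function $\tilde\phi$ is continuous, and a segment-crossing estimate (any segment joining $\overline{\Omega}$ to its complement meets $\partial\Omega$) shows $\tilde\phi$ is globally $1$-Lipschitz; together with $\int_{\R^N}|\nabla\tilde\phi|^2\le\int_{\R^N}|\nabla\phi_{\rho^\ast}|^2<\infty$ this gives $\tilde\phi\in\X$. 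Since $\rho^\ast$ is carried by $\partial\Omega$, where $\tilde\phi=\phi_{\rho^\ast}=\lambda^\ast$, we have $\langle\rho^\ast,\tilde\phi\rangle=\langle\rho^\ast,\phi_{\rho^\ast}\rangle$, while $\nabla\tilde\phi\equiv0$ on $\Omega$ and $\tilde\phi=\phi_{\rho^\ast}$ outside $\overline{\Omega}$ give $\cj(\tilde\phi)\le\cj(\phi_{\rho^\ast})$; hence $\ci_{\rho^\ast}(\tilde\phi)\le\ci_{\rho^\ast}(\phi_{\rho^\ast})$. By uniqueness of the minimizer of $\ci_{\rho^\ast}$ on $\X$ we conclude $\tilde\phi=\phi_{\rho^\ast}$, that is, $\phi_{\rho^\ast}\equiv\lambda^\ast$ on $\overline{\Omega}$.

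The main obstacle is the maximum principle step: one needs the global maximum of $\phi_{\rho^\ast}$ to be attained on $\supp\rho^\ast$, not merely on $\partial\Omega$, because $\supp\rho^\ast$ may be a proper subset of $\partial\Omega$. This is precisely why a direct appeal to Lemma~\ref{maxmax} does not suffice and the mollification-and-limit scheme must be repeated. The flattening step is then routine, the only points requiring care being the continuity and the $1$-Lipschitz bound of $\tilde\phi$ across $\partial\Omega$, both of which rely on having first established $\phi_{\rho^\ast}=\lambda^\ast$ on $\partial\Omega$.
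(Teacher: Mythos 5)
Your proposal is correct and takes essentially the same approach as the paper: a contradiction argument on the boundary values built from Lemmas~\ref{mollify_measure}, \ref{locate_max} (via \eqref{not_empty}), \ref{lemmamax} and the locally uniform convergence of Lemma~\ref{continuous_dependance}, followed by the identical flattening competitor (constant $\lambda^\ast$ on $\overline{\Omega}$, $\phi_{\rho^\ast}$ outside) and the uniqueness of the minimizer of $\ci_{\rho^\ast}$. The only cosmetic difference is that you run Step~1 against $\supp\rho^\ast$ and the full global level set $\{\phi_{\rho^\ast}=M\}$, while the paper works with the set of boundary maxima and $\rho^\ast$-null neighbourhoods of it; the two formulations of the disjointness fact are equivalent.
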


\begin{proof} We divide the proof in two steps: we first show that the statement is true on $\de \O$ and, then, also in $\O$.

\medskip
 
\noindent{\sc Step 1}: for every $x\in\de \Omega$ we have $\phi_{\rho^\ast}(x)=\lambda^\ast$. 
\\
%It is sufficient to prove the result for $\phi_{\rho^{\ast,n}}^n$. 
Suppose $M := \max_{\partial\Omega} \phi_{\rho^\ast} > \lambda^\ast$ and define 
$$
T = \{x\in \partial \Omega : \phi_{\rho^\ast}(x)=M\}.
$$
Note that
$ \rho^\ast(T)=0$ and, since $M>  \lambda^\ast$, the continuity of $\phi_{\rho^\ast}$ implies that even $\rho^\ast\left(T +\overline{B}_\delta(0)\right)=0$ for all sufficiently small $\delta\in (0,\delta_0]$ by Lemma~\ref{ae_wrt_measure}. 
\\
Let us show that in particular  $(T+\overline{B}_{\delta_0/2}(0)) \cap \supp\rho^\ast=\emptyset$. 
Suppose for contradiction that $x\in (T+\overline{B}_{\delta_0/2}(0)) \cap \supp\rho^\ast$. Then for every open neighbourhood $N_x$ of $x$ we have $\rho^\ast(N_x)>0$, and in particular for $N_x=B_{\delta_0/2}(x)$. But $B_{\delta_0/2}(x)\subset  T+\overline{B}_{\delta_0}(0)$ and hence $\rho^\ast(N_x)=0$ so that we have reached a contradiction. 
We now have that $\dist(\supp   \rho^\ast, T+\overline{B}_{\delta_0/2}(0))>0$. 
\\
Let $\eps_0$ be so small that the mollified measure $  \rho^\ast_\eps$ (according to Lemma~\ref{mollify_measure}) satisfies $\dist(\supp   \rho^\ast_\eps, T+\overline{B}_{\delta_0/2}(0))>0$ for all $\eps\in (0,\eps_0]$. 
Arguing as in Lemma~\ref{maxmax}  we get that $\phi_{\rho_\eps^\ast}$ attains its maximum only at $T+\overline{B}_{\delta_0/2}(0)$ for sufficiently small $\eps$. 
%\sout{But the second alternative of Lemma~\ref{locate_max} says that $\phi_{\rho_\eps}$ attains its maximum on an entire bounded open component containing $S+\overline{B}_{\delta_0/2}(0))$.}
However, since $T+\overline{B}_{\delta/2}(0)$ and $\supp\rho_\eps^\ast$ are disjoint we get, as before, a contradiction to \eqref{not_empty} from Lemma~\ref{locate_max}. This finishes the first step.
%Since $  \rho^\ast_\eps$ is represented by a bounded function we know from \cite{BDP} that $\phi_{  \rho^\ast_\eps}$ is a weak (strictly space-like) solution of 
%$$
%-\dv\left(\frac{\nabla \phi_{  \rho^\ast_\eps}}{\sqrt{1-|\nabla \phi_{  \rho^\ast_\eps}|^2}}\right) =   \rho^\ast_\eps \mbox{ in } \R^N.
%$$
%Since $\phi_{  \rho^\ast_\eps}\to \phi_{  \rho^\ast}$ as $\eps\to 0$ locally uniformly on $\R^N$, by Lemma \ref{lemmamax} we know that for small $\eps>0$ the function $\phi_{  \rho^\ast_\eps}$ attains its maximum over $\R^N$ at a point in $y_\eps\in S+\overline{B}_\delta(0)$. In a neighborhood of $y_\eps$ we have 
%$$
%\dv\left(\frac{\nabla \phi_{  \rho^\ast_\eps}}{\sqrt{1-|\nabla \phi_{  \rho^\ast_\eps}|^2}}\right) = 0.
%$$ 
%This is a contradiction to the strong maximum principle for uniformly elliptic equations. \todo{reference}

\medskip
 
\noindent{\sc Step 2}: for every $x\in \Omega$ we have $\phi_{\rho ^\ast}(x)=\lambda ^\ast$. 
\\
%{\color{red}We set
%\[
%C_{\phi_{\rho ^\ast}}^{0,1}(\O)=
%\left\{\phi\in C^{0,1}(\O) \mid \phi|_{\partial \O}=\phi_{\rho ^\ast}|_{\partial \O}=\lambda ^\ast, |\nabla \phi| \le 1\right\}, 
%\]
%and define $I_\O:C_{\phi_{\rho ^\ast}}^{0,1}(\O)\to\mathbb{R}$ by
%\[
%I_\O(\phi)=\int_{\O} \Big(1-\sqrt{1-|\nabla \phi|^2}\Big) \ dx.
%\]
%It is easy to see that $\phi_{\rho ^\ast}|_{\O}$ is a minimizer for $I_\O$ in $C_{\phi_{\rho ^\ast}}^{0,1}(\O)$ and so, for the boundary conditions and the uniqueness of the minimizer,  for every $x\in \Omega$ we have $\phi_{\rho ^\ast}(x)=\lambda ^\ast$. 
%}
Let us define the function
$$
\hat\phi_{\rho^\ast}(x) = \left\{
\begin{array}{ll}
\phi_{\rho^\ast}(x), & x\in \R^N\setminus\Omega, \vspace{\jot}\\
\lambda^\ast, & x \in \Omega.
\end{array}
\right.
$$
Then $\hat\phi_{\rho^\ast} \in \X$, $\mathcal{J}(\hat\phi_{\rho^\ast}) \leq \mathcal{J}(\phi_{\rho^\ast})$ and by Step 1 
$$
\langle \rho^\ast, \hat\phi_{\rho^\ast}\rangle = \oint_{\partial\Omega} \lambda^\ast \,d\rho^\ast = \langle \rho^\ast, \phi_{  \rho^\ast}\rangle
$$ 
so that finally $\mathcal{I}_{\rho^\ast}(\hat\phi_{\rho^\ast}) \leq \mathcal{I}_{\rho^\ast}(\phi_{\rho^\ast})$. The uniqueness of the minimizer implies $\hat\phi_{\rho^\ast}=\phi_{\rho^\ast}$ and this finishes the proof.
\end{proof}

From Theorem~\ref{main} we   obtain the following result, which shows in particular the interesting fact that although in general we do not know about the uniqueness of the \emph{equilibrium measure}, the \emph{equilibrium potential} is always unique.
\begin{corollary} \label{cor:unique_potential}
For every $\mu \in P(\partial\Omega)$ we have the equality
\begin{equation}
\label{eq_equilibrium}
\lambda^\ast = \langle  \rho ^\ast,\phi_{\rho ^\ast} \rangle =\langle  \mu,\phi_{\rho ^\ast} \rangle. 
\end{equation}
Moreover, the value $\lambda^\ast$ and the equilibrium potential $\phi_{\rho^\ast}$ are unique in the following sense: whenever $\rho^\ast, \rho^{\ast\ast}$ are equilibrium measures then $\phi_{\rho^\ast}=\phi_{\rho^{\ast\ast}}$ and $\lambda^\ast=\lambda^{\ast\ast}$.
\end{corollary}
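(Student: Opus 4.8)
The plan is to derive both assertions directly from Theorem~\ref{main}, which guarantees that $\phi_{\rho^\ast}\equiv\lambda^\ast$ on all of $\overline\Omega$, together with the uniqueness of the minimizer of $\ci_\rho$ from \cite[Theorem~1.3]{BDP}. For the equality \eqref{eq_equilibrium}, the first identity $\lambda^\ast=\langle\rho^\ast,\phi_{\rho^\ast}\rangle$ is simply the definition of $\lambda^\ast$ in Proposition~\ref{cacca}. For the second, I would use that any $\mu\in P(\partial\Omega)$ is supported on $\partial\Omega\subset\overline\Omega$ and has unit total mass, so by Theorem~\ref{main},
$$
\langle\mu,\phi_{\rho^\ast}\rangle=\oint_{\partial\Omega}\phi_{\rho^\ast}\,d\mu=\oint_{\partial\Omega}\lambda^\ast\,d\mu=\lambda^\ast.
$$
This both proves and sharpens the inequality of Proposition~\ref{cacca} into an equality.

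For the uniqueness statement, let $\rho^\ast,\rho^{\ast\ast}$ be two equilibrium measures, with potentials $\phi_{\rho^\ast},\phi_{\rho^{\ast\ast}}$ and constant values $\lambda^\ast,\lambda^{\ast\ast}$ on $\overline\Omega$. The key idea is to test the functional $\ci_{\rho^\ast}$ with the competitor $\phi_{\rho^{\ast\ast}}$ and then invoke uniqueness of its minimizer. Writing $\ci_{\rho^\ast}(\phi)=\cj(\phi)-\langle\rho^\ast,\phi\rangle$, I would first evaluate the cross term $\langle\rho^\ast,\phi_{\rho^{\ast\ast}}\rangle$: applying the equality \eqref{eq_equilibrium} just established, now with equilibrium measure $\rho^{\ast\ast}$ and test measure $\mu=\rho^\ast$ (equivalently, using that $\phi_{\rho^{\ast\ast}}\equiv\lambda^{\ast\ast}$ on $\partial\Omega$ by Theorem~\ref{main} and $\rho^\ast\in P(\partial\Omega)$), gives $\langle\rho^\ast,\phi_{\rho^{\ast\ast}}\rangle=\lambda^{\ast\ast}$. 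Hence
\begin{align*}
\ci_{\rho^\ast}(\phi_{\rho^{\ast\ast}})
&=\cj(\phi_{\rho^{\ast\ast}})-\lambda^{\ast\ast}
=\ci_{\rho^{\ast\ast}}(\phi_{\rho^{\ast\ast}})
=-\mathcal{E}(\phi_{\rho^{\ast\ast}})\\
&=-\mathcal{E}(\phi_{\rho^\ast})
=\ci_{\rho^\ast}(\phi_{\rho^\ast}),
\end{align*}
where the middle equality uses that both $\rho^\ast$ and $\rho^{\ast\ast}$ are equilibrium measures, so $\mathcal{E}(\phi_{\rho^\ast})=\mathcal{E}(\phi_{\rho^{\ast\ast}})$ equals the common minimal energy. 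Thus $\phi_{\rho^{\ast\ast}}$ attains the minimal value of $\ci_{\rho^\ast}$; since that minimizer is unique by \cite[Theorem~1.3]{BDP}, I conclude $\phi_{\rho^\ast}=\phi_{\rho^{\ast\ast}}$.

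Finally, evaluating the now common potential on $\overline\Omega$ and invoking Theorem~\ref{main} once more yields $\lambda^\ast=\lambda^{\ast\ast}$, completing the proof. I do not anticipate a serious obstacle: the whole argument rests only on Theorem~\ref{main} and the uniqueness of the minimizer of $\ci_\rho$, both already in hand. The one point that demands care is the evaluation of the cross term $\langle\rho^\ast,\phi_{\rho^{\ast\ast}}\rangle$, where one must ensure that the support of $\rho^\ast$ lies in the region on which $\phi_{\rho^{\ast\ast}}$ is constant; this is exactly guaranteed because $\rho^\ast\in P(\partial\Omega)$ and $\partial\Omega\subset\overline\Omega$. (A more laborious alternative would route through the convexity of $\rho\mapsto\mathcal{E}(\phi_\rho)$ as a Legendre transform, passing to the midpoint measure and applying the Clarkson-type strict convexity of $\cj$ used in Lemma~\ref{continuous_dependance}; the direct competitor argument above avoids this.)
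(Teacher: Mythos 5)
Your proof is correct and follows essentially the same route as the paper: equality \eqref{eq_equilibrium} from Theorem~\ref{main}, then the cross-term identity $\langle\rho^\ast,\phi_{\rho^{\ast\ast}}\rangle=\langle\rho^{\ast\ast},\phi_{\rho^{\ast\ast}}\rangle$ combined with equality of the minimal energies to show $\phi_{\rho^{\ast\ast}}$ minimizes $\ci_{\rho^\ast}$, and finally uniqueness of that minimizer. The only cosmetic difference is that you write the chain as equalities outright, whereas the paper states it as an inequality $\ci_{\rho^\ast}(\phi_{\rho^\ast})\le\ci_{\rho^\ast}(\phi_{\rho^{\ast\ast}})$ that collapses to equality; the content is identical.
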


\begin{proof} The equality \eqref{eq_equilibrium} follows directly from Theorem~\ref{main}.  In particular, $\langle \rho^{\ast\ast}-\rho^\ast,\phi_{\rho^{\ast\ast}}\rangle=0$. Therefore, we have 
$$
-\mathcal{E} (\phi_{\rho^\ast})=\ci_{\rho^\ast}(\phi_{\rho^\ast})
\le  \ci_{{\rho^\ast}} (\phi_{\rho^{\ast\ast}})
=\ci_{\rho^{\ast\ast}} (\phi_{\rho^{\ast\ast}})
=-\mathcal{E} (\phi_{\rho^{\ast\ast}}).
$$
Since the two energies $\mathcal{E} (\phi_{\rho^\ast})$, $\mathcal{E} (\phi_{\rho^{\ast\ast}})$ are equal, we obtain $\ci_{\rho^\ast}(\phi_{\rho^\ast})=  \ci_{{\rho^\ast}} (\phi_{\rho^{\ast\ast}})$. By the uniqueness of the minimizer of $\ci_{\rho^\ast}$ we get $\phi_{\rho^\ast}=\phi_{\rho^{\ast\ast}}$ which also implies $\lambda^\ast=\lambda^{\ast\ast}$.
%\sout{We have}
%\begin{align*}
%\cancel{\ci_{\rho^\ast}(\phi_{\rho^\ast})}
%&\cancel{\le  \ci_{{\rho^\ast}} (\phi_{\rho^{\ast\ast}})
%=\ci_{\rho^{\ast\ast}} (\phi_{\rho^{\ast\ast}})+\langle \rho^{\ast\ast}-\rho^\ast,\phi_{\rho^{\ast\ast}}\rangle
%=-\mathcal{E} (\phi_{\rho^{\ast\ast}})+\langle \rho^{\ast\ast}-\rho^\ast,\phi_{\rho^{\ast\ast}}\rangle}
%\\
%&\cancel{=-\mathcal{E} (\phi_{\rho^\ast})+\langle \rho^{\ast\ast}-\rho^\ast,\phi_{\rho^{\ast\ast}}\rangle
%=\ci_{\rho^\ast}(\phi_{\rho^\ast})+\langle \rho^{\ast\ast}-\rho^\ast,\phi_{\rho^{\ast\ast}}\rangle
%= \ci_{\rho^\ast}(\phi_{\rho^\ast})}
%\end{align*}
%\sout{since, by \eqref{eq_equilibrium}, we infer that $\langle \rho^{\ast\ast}-\rho^\ast,\phi_{\rho^{\ast\ast}}\rangle=0$. Thus $\ci_{\rho^\ast}(\phi_{\rho^\ast})=  \ci_{{\rho^\ast}} (\phi_{\rho^{\ast\ast}})$ and, by the uniqueness of the minimizer of $\ci_{\rho^\ast}$,  $\phi_{\rho^\ast}=\phi_{\rho^{\ast\ast}}$. This also implies $\lambda^\ast=\lambda^{\ast\ast}$.} 
\end{proof}

\begin{lemma}\label{lambda>0}
The value $\l^*$ is strictly positive.
\end{lemma}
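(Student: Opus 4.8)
The plan is to read off the sign of $\lambda^\ast$ directly from the action of the equilibrium potential, combined with the elementary nonnegativity of $\mathcal{J}$. The starting point is the defining identity $\lambda^\ast = \langle \rho^\ast,\phi_{\rho^\ast}\rangle$ together with the decomposition $\mathcal{I}_{\rho^\ast} = \mathcal{J} - \langle\rho^\ast,\cdot\rangle$, which give
\[
\mathcal{I}_{\rho^\ast}(\phi_{\rho^\ast}) = \mathcal{J}(\phi_{\rho^\ast}) - \lambda^\ast.
\]

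First I would invoke the strict inequality $\mathcal{I}_{\rho^\ast}(\phi_{\rho^\ast}) = -\mathcal{E}(\phi_{\rho^\ast}) < 0$, which was already recalled from \cite[Proposition~2.3]{BDP} in the proof of Theorem~\ref{th:exis}. Rearranging the displayed identity then yields $\lambda^\ast = \mathcal{J}(\phi_{\rho^\ast}) - \mathcal{I}_{\rho^\ast}(\phi_{\rho^\ast}) > \mathcal{J}(\phi_{\rho^\ast})$.

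Second, since $\sqrt{1-|\nabla\phi|^2}\le 1$ pointwise whenever $\phi\in\X$, we have $\mathcal{J}(\phi) = \int_{\R^N}\bigl(1-\sqrt{1-|\nabla\phi|^2}\bigr)\,dx \ge 0$, and in particular $\mathcal{J}(\phi_{\rho^\ast})\ge 0$. Chaining the two inequalities gives $\lambda^\ast > \mathcal{J}(\phi_{\rho^\ast}) \ge 0$, which is exactly the claim.

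I do not expect any genuine obstacle: the entire argument reduces to the strict negativity of the minimal action, and that strictness is the only nontrivial ingredient. It encodes the fact that a nontrivial charge $\rho^\ast\in P(\partial\Omega)$ forces a nontrivial potential, since for a suitable test function $\psi$ and small $t>0$ the linear term $-t\langle\rho^\ast,\psi\rangle$ dominates the quadratic-order term $\mathcal{J}(t\psi)$, pushing the infimum of $\mathcal{I}_{\rho^\ast}$ strictly below $\mathcal{I}_{\rho^\ast}(0)=0$. As this is precisely the content of \cite[Proposition~2.3]{BDP}, I would simply cite it rather than reprove it.
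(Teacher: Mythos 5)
Your proof is correct, and it takes a genuinely different and more elementary route than the paper's. Your entire argument is the algebraic identity $\mathcal{I}_{\rho^\ast}(\phi_{\rho^\ast})=\mathcal{J}(\phi_{\rho^\ast})-\lambda^\ast$ (immediate from the definition of $\lambda^\ast$ in Proposition~\ref{cacca}), combined with the strict negativity $\mathcal{I}_{\rho^\ast}(\phi_{\rho^\ast})<0$ from \cite[Proposition~2.3]{BDP} and the pointwise bound $1-\sqrt{1-|\nabla\phi|^2}\ge 0$ giving $\mathcal{J}\ge 0$ on $\mathcal{X}$; this chains to $\lambda^\ast>\mathcal{J}(\phi_{\rho^\ast})\ge 0$. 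The paper instead argues in two stages: it first combines Theorem~\ref{main} and Lemma~\ref{maxmax} with the decay of $\phi_{\rho^\ast}$ at infinity to get $\lambda^\ast=\max_{\R^N}\phi_{\rho^\ast}\ge 0$, and then rules out $\lambda^\ast=0$ by contradiction, using the variational inequality of \cite[Proposition~2.7]{BDP}, namely
\[
\int_{\R^N}\frac{|\nabla\phi_{\rho^\ast}|^2}{\sqrt{1-|\nabla\phi_{\rho^\ast}|^2}}\,dx\le\langle\rho^\ast,\phi_{\rho^\ast}\rangle=0,
\]
to force $\phi_{\rho^\ast}\equiv 0$ and hence $\mathcal{I}_{\rho^\ast}(\phi_{\rho^\ast})=0$, contradicting the same \cite[Proposition~2.3]{BDP}. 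Both proofs ultimately rest on the strict negativity of the minimal action, but yours dispenses entirely with the maximum-principle machinery behind Lemma~\ref{maxmax} and with \cite[Proposition~2.7]{BDP}, and it even yields the sharper quantitative statement $\lambda^\ast=\mathcal{J}(\phi_{\rho^\ast})+\mathcal{E}(\phi_{\rho^\ast})$, i.e.\ $\lambda^\ast$ exceeds the nonnegative field term $\mathcal{J}(\phi_{\rho^\ast})$ by exactly the equilibrium energy. What the paper's longer route buys is the geometric identification $\lambda^\ast=\max_{\R^N}\phi_{\rho^\ast}$, obtained as a by-product, which is in the spirit of the positivity and comparison arguments it develops next (Theorem~\ref{main2}); your argument proves the lemma as stated without producing that identification.
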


\begin{proof}
By Theorem \ref{main} and Lemma \ref{maxmax}  we find $\lambda^*=\max_{\de \O}\phi_{\rho^*}=\max_{\RN}\phi_{\rho^*}$.  Since moreover $\phi_{\rho^*}$ vanishes at infinity, we deduce that $\lambda^*\ge 0$. Suppose for contradiction that $\lambda^*=0$. Then, by \cite[Proposition 2.7]{BDP}, 
\[
\irn \frac{|\n \phi_{\rho^*}|^2}{\sqrt{1-|\n \phi_{\rho^*}|^2}}\ dx\le \langle \rho^*,\phi_{\rho^*}\rangle=\l^\ast=0.
\] 
Hence $\phi_{\rho^*}=0$ and so $\ci_{\rho^*}(\phi_{\rho^*})=0$, in contradiction to $\ci_{\rho^*}(\phi_{\rho^*})<0$, cf. \cite[Proposition~2.3]{BDP}.
\end{proof}

For smooth domains, we can give a more precise description of the equilibrium measure and of its properties.  

\begin{theorem}\label{main2}
Suppose that $ \de\O\in C^{2,\alpha}$, for some $\alpha>0$. Then $\phi_{\rho^*}$ is the unique weak solution of 
\begin{equation}\label{eq:BI*}
\left\{
\begin{array}{rcll}
-\operatorname{div}\left(\displaystyle\frac{\nabla \phi}{\sqrt{1-|\nabla \phi|^2}}\right)&=& \rho^* & \hbox{ in } \mathbb{R}^N,
\\[6mm]
\displaystyle\lim_{|x|\to \infty}\phi(x)&=& 0.
\end{array}
\right.
\end{equation}
Moreover, for every $r>0$ sufficiently large, $\phi_{\rho^\ast}\in C^{2,\alpha}(\overline{B}_r(0)\setminus\Omega)$ and there exists $\t=\t(r)\in(0,1)$ such that $|\n \phi_{\rho^*}|\le 1-\theta<1$ in $\overline{B}_r(0)$.
\end{theorem}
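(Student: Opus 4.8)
The plan is to reduce everything to an exterior Dirichlet problem for the maximal (Born--Infeld) spacelike operator, apply the regularity theory of Bartnik and Simon, and then recover the weak equation by a direct variation. First I would exploit Theorem~\ref{main}: since $\phi_{\rho^*}\equiv\lambda^*$ on $\overline{\O}$ and $\rho^*$ is supported on $\de\O$, every competitor $u\in\X$ with $u=\lambda^*$ on $\de\O$ satisfies $\langle\rho^*,u\rangle=\lambda^*\oint_{\de\O}d\rho^*=\lambda^*$, so that $\ci_{\rho^*}(u)=\cj(u)-\lambda^*$. Because $\phi_{\rho^*}$ minimizes $\ci_{\rho^*}$ over all of $\X$ and itself belongs to this constrained class, it follows at once that $\phi_{\rho^*}$ minimizes the source-free functional $\cj$ among all $u\in\X$ with $u=\lambda^*$ on $\de\O$ and $u(x)\to 0$ as $|x|\to\infty$. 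Equivalently, the restriction of $\phi_{\rho^*}$ to $\RN\setminus\overline{\O}$ solves the exterior Dirichlet problem for the operator $-\operatorname{div}\bigl(\nabla\phi/\sqrt{1-|\nabla\phi|^2}\bigr)$ with the smooth constant datum $\lambda^*$ on $\de\O$.

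The heart of the argument, and the step I expect to be the main obstacle, is the boundary regularity of this exterior problem up to $\de\O$. Since $\de\O\in C^{2,\alpha}$ and the boundary datum $\lambda^*$ is constant, I would invoke the regularity theory for spacelike hypersurfaces of prescribed mean curvature of Bartnik and Simon \cite{BS}, in the form extended by Bonheure and Iacopetti \cite{Bon-Iac2}: interior estimates make $\phi_{\rho^*}$ strictly spacelike and smooth in the open set $\RN\setminus\overline{\O}$, while barriers built from the $C^{2,\alpha}$ boundary control the gradient up to $\de\O$ from the outside. This yields $\phi_{\rho^*}\in C^{2,\alpha}(\overline{B}_r(0)\setminus\O)$ for every sufficiently large $r$. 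Combining the exterior gradient bound near $\de\O$ with the fact that $\nabla\phi_{\rho^*}=0$ inside $\O$, and using compactness of $\overline{B}_r(0)\setminus\O$, produces $\theta=\theta(r)\in(0,1)$ with $|\nabla\phi_{\rho^*}|\le 1-\theta<1$ on $\overline{B}_r(0)$. The delicate point is obtaining the \emph{uniform} spacelike bound up to $\de\O$, not merely strict spacelikeness in the open exterior; this is precisely what the barrier construction for smooth domains supplies, and it is what distinguishes the equilibrium potential (constant on $\overline{\O}$) from a general minimizer, for which the question remains open.

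With this gradient bound in hand, the weak equation follows by a one-parameter variation. For $\psi\in C_c^\infty(\RN)$ its support lies in some $\overline{B}_r(0)$ on which $|\nabla\phi_{\rho^*}|\le 1-\theta$; hence for $|t|$ small enough $\phi_{\rho^*}+t\psi\in\X$, and on $\supp\psi$ the integrand $1-\sqrt{1-|\nabla\phi_{\rho^*}+t\nabla\psi|^2}$ stays smooth in $t$. Since $\phi_{\rho^*}$ minimizes $\ci_{\rho^*}$ and this two-sided variation is admissible, the scalar function $t\mapsto\ci_{\rho^*}(\phi_{\rho^*}+t\psi)$ has an interior minimum at $t=0$; differentiating under the integral sign and setting the derivative to zero gives
\[
\int_{\RN}\frac{\nabla\phi_{\rho^*}\cdot\nabla\psi}{\sqrt{1-|\nabla\phi_{\rho^*}|^2}}\,dx=\langle\rho^*,\psi\rangle\qquad\text{for all }\psi\in C_c^\infty(\RN),
\]
which is exactly \eqref{eq:BI_weak} with datum $\rho^*$; together with $\phi_{\rho^*}(x)\to 0$ at infinity this shows $\phi_{\rho^*}$ is a weak solution of \eqref{eq:BI*}.

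Finally, for uniqueness I would appeal to the variational characterization of weak solutions recalled in the excerpt: by \cite[Proposition~2.6]{BDP} any weak solution of \eqref{eq:BI*} must minimize $\ci_{\rho^*}$ over $\X$, and by \cite[Theorem~1.3]{BDP} this minimizer is unique. Hence every weak solution coincides with $\phi_{\rho^*}$, which completes the proof.
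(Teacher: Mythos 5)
Your overall architecture coincides with the paper's: reduce to an exterior Dirichlet problem with constant datum $\lambda^\ast$ (your reduction via $\langle\rho^\ast,u\rangle=\lambda^\ast$ for competitors with $u=\lambda^\ast$ on $\de\O$ is correct and is essentially how the paper sets up the application of \cite{BS}), obtain regularity from Bartnik--Simon theory, then recover \eqref{eq:BI_weak} by a two-sided variation inside $\X$ and conclude uniqueness from \cite[Proposition~2.6]{BDP}; your final two steps are exactly the paper's closing remarks and are fine. The genuine gap sits in the middle step, which is where all the content of the paper's proof lies. Your claim that ``interior estimates make $\phi_{\rho^\ast}$ strictly spacelike and smooth in the open set $\RN\setminus\overline{\O}$'' is false as stated: for the maximal-surface/Born--Infeld functional, minimizers need \emph{not} be strictly spacelike or smooth in the interior. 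What \cite[Corollary~4.2]{BS} gives is smoothness and strict spacelikeness only off the set $K$ of \emph{light rays}, i.e. segments $\overline{xy}$ with endpoints on the boundary of the subdomain along which the minimizer is affine with unit slope, and excluding $K$ is a real step in which the equilibrium structure enters: the paper extends any light ray via \cite[Theorem~3.2]{BS} until either both endpoints reach $\de\O$ --- impossible, since endpoints of a light ray must have values differing by $|x-y|>0$ while $\phi_{\rho^\ast}\equiv\lambda^\ast$ on $\de\O$ by Theorem~\ref{main} --- or the ray is unbounded, contradicting the boundedness of $\phi_{\rho^\ast}$. If interior regularity were automatic, every potential $\phi_\rho$ would solve the equation outside $\de\O$; the proposition following Theorem~\ref{main2} in the paper shows that for non-constant boundary values this requires extra hypotheses (e.g.\ convexity of $\O$), precisely because light rays with both endpoints on $\de\O$ cannot otherwise be ruled out.

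The second half of the same gap concerns the uniform bound $|\n\phi_{\rho^\ast}|\le 1-\theta$ up to $\de\O$. This cannot be obtained from ``barriers built from the $C^{2,\alpha}$ boundary'' in a purely local way: the relevant result, \cite[Theorem~3.6]{BS}, requires the boundary data on a truncated exterior domain (inner boundary $\de\O$ with value $\lambda^\ast$, outer boundary with the values of $\phi_{\rho^\ast}$) to admit a \emph{strictly spacelike extension}, and verifying this hypothesis is a global matter: one must bridge from $\lambda^\ast$ on $\de\O$ down to a small value on the outer boundary with slope bounded away from $1$, which is possible only if the outer boundary is sufficiently far away relative to $\lambda^\ast$. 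The paper establishes exactly this through the inequality $\lambda^\ast+R<R_{\bar\eps}$ in \eqref{bareps}, whose proof needs the strict positivity of $\phi_{\rho^\ast}$ in all of $\RN$ (classical maximum principle in the exterior together with Lemma~\ref{lambda>0}), the decay at infinity, and Sard's lemma to select smooth level sets $\Gamma_{\bar\eps}$ as outer boundaries; the strictly spacelike extension is then an explicit mollified cone between $\de\O$ and $\Gamma_{\bar\eps}$. None of this is supplied by citing \cite{BS} or \cite{Bon-Iac2} off the shelf, so as written your regularity step is a black box whose hypotheses are unverified --- and verifying them is, in essence, the proof of the theorem.
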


\begin{proof}
By Theorem \ref{main} we know that $\phi_{\rho^*}=\lambda^*$ in $\overline{\Omega}$. The proof will be divided into two steps. In the first step, we show local regularity and strict positivity. In the second step, boundary regularity is included.

\medskip

\noindent{\sc Step 1 (local regularity):} Let us show that $\phi_{\rho^\ast}\in C^\infty (\RN \setminus\overline{\Omega})$, that it is  strictly positive and strictly spacelike in $\RN \setminus\overline{\Omega}$ and satisfies 
\begin{equation}\label{eq:oc}
\left\{
\begin{array}{rcll}
-\operatorname{div}\left(\displaystyle\frac{\nabla \phi}{\sqrt{1-|\nabla \phi|^2}}\right)&=& 0 & \hbox{in }\mathbb{R}^N\setminus \overline{\Omega},
\\[6mm]
\phi&=&\lambda^*& \hbox{on }\de \O,
\\[2mm]
\displaystyle\lim_{|x|\to \infty}\phi(x)&=&0.
\end{array}
\right.
\end{equation}
To prove this claim we adapt some ideas of \cite{BS,BDP} to our case. Let $\co$ be an arbitrary bounded domain with smooth boundary in $\mathbb{R}^N\setminus\overline{\O}$. We define the set
\[
C_{\phi_{\rho^\ast}}^{0,1}(\co)=
\left\{\phi\in C^{0,1}(\co) \mid \phi|_{\partial \co}=\phi_{\rho^\ast}|_{\partial \co}, |\nabla \phi| \le 1\right\}
\]
and the functional $I_\co:C_{\phi_{\rho^\ast}}^{0,1}(\co)\to\mathbb{R}$ by
\[
I_\co(\phi)=\int_{\co} \Big(1-\sqrt{1-|\nabla \phi|^2}\Big) \ dx.
\]
The set of all possible {\em light rays} in $\co$  is 
\begin{equation*}
%\label{eq:setK}
K=\left\{\overline{xy}\subset\co\mid x,y\in\partial\co, x\neq y, |\phi_{\rho^\ast}(x)-\phi_{\rho^\ast}(y)|=|x-y|\right\}.
\end{equation*}
Since $\phi_{\rho^\ast}|_{\co}$ is the unique minimizer of $I_\co$ and we infer from
 \cite[Corollary 4.2]{BS} that $\phi_{\rho^\ast}$ is strictly spacelike in $\co\setminus K$,  that it satisfies 
\[
-\operatorname{div}\left(\displaystyle\frac{\nabla \phi_{\rho^\ast}}{\sqrt{1-|\nabla \phi_{\rho^\ast}|^2}}\right)= 0 \quad \hbox{ in }\co\setminus K,
\]
and  that
\[
\phi_{\rho^\ast}(tx+(1-t)y)=t\phi_{\rho^\ast}(x)+(1-t)\phi_{\rho^\ast}(y),
\qquad
0<t<1,
\]
holds for every $x,y\in\partial\co$ such that $|\phi_{\rho^\ast}(x)-\phi_{\rho^\ast}(y)|=|x-y|$ and $\overline{xy}\subset\co$. Assume now by contradiction that $K\neq\emptyset$. Then there exist $x,y\in\partial\co$ such that $x\neq y$, 
$|\phi_{\rho^\ast}(x)-\phi_{\rho^\ast}(y)|=|x-y|$, $\overline{xy}\subset\co$. Without loss of generality we can assume that $\phi_{\rho^\ast}(x)>\phi_{\rho^\ast}(y)$. It follows that for all $t\in (0,1)$, 
\begin{equation}\label{eq:ray}
\phi_{\rho^\ast}(tx+(1-t)y)=t\phi_{\rho^\ast}(x)+(1-t)\phi_{\rho^\ast}(y)=\phi_{\rho^\ast}(y)+t|x-y|.
\end{equation}
Since, for any $R>0$ such that $\co\subset B_R$, the function ${\phi_{\rho^\ast}}|_{B_R\setminus \overline{\Omega}} $ is a minimizer of $I_{B_R \setminus \overline{\Omega}}$ on the set $C_{\phi_{\rho^\ast}}^{0,1}(B_R\setminus \overline{\Omega})$, then, by \cite[Theorem 3.2]{BS}, we have that \eqref{eq:ray} holds for all $t\in\mathbb{R}$ such that $tx +(1-t)y\in B_R\setminus \overline{\Omega}$. Therefore we can stretch the {\em light ray} until one of these two possibilities occur: either both end-points belong to $\de \O$, or, at least in one direction, the light ray is unbounded. The first case is clearly impossible since $\phi_{\rho^*}=\lambda^*$ on $\de \O$ whereas the second case contradicts the boundedness of $\phi_{\rho}$. This shows that $K=\emptyset$ and therefore the $C^\infty$-regularity of $\phi_{\rho}$ on the open set $\R^N\setminus\overline{\Omega}$ follows from \cite[Remarks at page 148]{BS}.
\\ 
Finally, suppose for contradiction that $\phi_{\rho^\ast}>0$ in $\R^N$ fails. Since $\phi_{\rho^\ast}(x)\to 0$ as $x\to \infty$ this implies that $\phi_{\rho^\ast}$ attains its non-positive minimum at a point $x_0$ in $\R^N$. By Theorem~\ref{main} and Lemma~\ref{lambda>0} we find that $x_0\not\in \overline{\Omega}$, i.e., $x_0$ belongs to a connected component $Z$ of $\R^N\setminus\overline{\Omega}$. Since $\phi_{\rho^\ast}$ is a classical solution of \eqref{eq:oc} and since the equation in \eqref{eq:oc} is locally uniformly elliptic in $\R^N\setminus\overline{\Omega}$, we find by the classical maximum-principle that $\phi_{\rho^\ast}\equiv \phi_{\rho^\ast}(x_0)\leq 0$ on $Z$. But $\partial Z\cap \partial\Omega\not = \emptyset$ and $\phi_{\rho^\ast}|_{\partial\Omega} = \lambda^\ast>0$. This contradiction shows that $\phi_{\rho^\ast}>0$ in $\R^N$.

\medskip

\noindent{\sc Step 2 (boundary regularity)}: Now we show that for every $r>0$ sufficiently large, $\phi_{\rho^\ast}\in C^{2,\alpha}(\overline{B}_r(0)\setminus\Omega)$ and there exists $\t=\t(r)\in(0,1)$ such that $|\n \phi_{\rho^*}|\le 1-\theta<1$ in $\overline{B}_r(0)$.

\medskip

\noindent Note that the claim implies that $\phi_{\rho^\ast}$ is not only the unique minimizer of $\mathcal{I}_{\rho^*}$ but actually a weak solution of \eqref{eq:BI*}. The reason is that the strictly spacelike function $\phi_{\rho^*}$ lies in the interior of $\mathcal{X}$, and hence variations in all directions can be computed for the minimizer $\phi_{\rho^\ast}$ of $\mathcal{I}_{\rho^*}$ which then turns out to be a weak solution of \eqref{eq:BI}. The uniqueness follows from \cite[Proposition 2.6]{BDP} and this ends the proof of the theorem.

\medskip

\noindent
The proof  relies on an application of \cite[Theorem 3.6]{BS}. It uses the construction of a strictly spacelike function $\bar\psi$ which has the same boundary values as $\phi_{\rho^\ast}$ on suitable sets exhausting $\R^N\setminus\Omega$. We begin with the construction of $\bar\psi$. For any $\eps>0$, let us define $ \Phi_\eps=\{x\in \RN\mid \phi_{\rho^\ast}(x)<\eps \}$, $\Sigma_\eps$ the complementary set in $\RN$ of the unbounded connected component of $ \Phi_\eps$ and $\G_\eps=\de \Sigma_\eps$. We set, moreover, 
\[
R=\max_{x\in \de \O}|x| \quad \hbox{ and }\quad R_\eps=\min_{x\in \G_\eps}|x|.
\]
We want to show that
\begin{equation}\label{bareps}
\exists \bar \eps>0 \hbox{ such that } \lambda^\ast+R<R_{\bar\eps}.
\end{equation}
Suppose by contradiction that this does not hold. Then $R_{\frac 1n}\le\l^\ast+R$ for any $n\ge 1$. 
Hence, for any $n\ge 1$, there exists $x_n\in \RN$ such that $|x_n|=R_{\frac 1n}\le  \lambda^\ast+R$  and $\phi_{\rho^\ast}(x_n)=\frac 1n$. Since $\{x_n\}_{n\in\N}\subset \bar B_{\lambda^\ast+R}$, there exists $\bar x\in \bar B_{\lambda^\ast+R}$ such that, up to a subsequence, $x_n \to \bar x$. Therefore $\phi_{\rho^\ast}(\bar x)=0$ contradicting the fact that $\phi_{\rho^\ast}>0$ in the whole $\RN$.

\medskip

\noindent
By Sard's Lemma $\Gamma_\eps$ is of class $C^\infty$ for almost all $\eps \in (0,\lambda^\ast)$. 
Since moreover $\eps \mapsto R_\eps$  is decreasing, we can find a suitable $\bar \eps>0$ such that $\G_{\bar \eps}$ is of class $C^\infty$ and  simultaneously \eqref{bareps} holds. Let us define $\psi: \R^N \to \R$, as follows
\[
\psi(x)=\left\{
\begin{array}{lll}
\lambda^\ast&& x\in  B_R,
\\[2mm]
\dfrac{\bar \eps -\l^\ast}{R_{\bar \eps}-R}(|x|-R)+\lambda^\ast && x\in B_{R_{\bar \eps}}\setminus B_R,
\\[3mm]
\bar \eps && x\in   \R^N\setminus B_{R_{\bar \eps}}.
\end{array}
\right.
\]
Observe that, in $B_{R_{\bar \eps}}\setminus B_R$, we have that $|\n \psi|=\dfrac{\lambda^\ast-\bar \eps }{R_{\bar \eps}-R}<\dfrac{\l^\ast}{R_{\bar \eps}-R}<1$, by \eqref{bareps}. 
Note that $\psi$ is a Lipschitz function on $\R^N$. By taking $R'$ slightly larger than $R$ and $R'_{\bar \eps}$ slightly smaller than $R_{\bar \eps}$ we can construct a corresponding function $\psi':\R^N\to \R$ with gradient still bounded away from $1$. Mollifying $\psi'$ we obtain a $C^\infty$-function $\bar\psi:\R^N\to \R$ such that $\bar{\psi}=\l^\ast$ on $\de \O$,  $\bar{\psi}=\bar \eps$ on $\G_{\bar \eps}$ and $|\n \bar \psi|\le 1-\t_0$ in $\R^N$ for some $\t_0>0$. The function $\bar\psi$ is therefore a strictly spacelike extension of the boundary values of $\phi_{\rho^\ast}$ to the entire set $\Sigma_{\bar\eps}\setminus\overline{\Omega}$.
We can therefore apply \cite[Theorem 3.6]{BS} to conclude that  there exists $\t_{\bar \eps}>0$ such that $|\n \phi_{\rho^\ast}|\le 1-\theta_{\bar \eps}<1$ in $\Sigma_{\bar \eps} \setminus \O$. 
The claim now follows since Sard's Lemma again implies that there exists a sequence of $\bar\eps_n \searrow 0$ such that $\Gamma_{\bar \eps_n}$ is $C^\infty$, \eqref{bareps} holds and $\Sigma_{\bar\eps_n}\setminus\Omega$ exhausts $\R^N\setminus\Omega$ as $n\to \infty$.
\end{proof}

At least for convex bounded sets $\Omega$ the function $\phi_\rho$, $\rho\in P(\partial\Omega)$, solves pointwise in $\R^N\setminus\overline \Omega$ a homogeneous Born-Infeld equation. This is achieved by adpating the proof of Theorem~\ref{main2}.

\begin{proposition}
Let $\O\subset \RN$ bounded and convex. Then, for any $\rho\in P(\de \O)$, $\phi_\rho$ is a strictly spacelike in $\RN \setminus\overline{\Omega}$ and satisfies   pointwise
\begin{equation*}%\label{eq:ocr}
\left\{
\begin{array}{rcll}
-\operatorname{div}\left(\displaystyle\frac{\nabla \phi}{\sqrt{1-|\nabla \phi|^2}}\right)&=& 0 & \hbox{in }\mathbb{R}^N\setminus \overline{\Omega},
\\[6mm]
\phi&=&\phi_\rho & \hbox{on }\de \O,
\\[2mm]
\displaystyle\lim_{|x|\to \infty}\phi(x)&=& 0.
\end{array}
\right.
\end{equation*}
\end{proposition}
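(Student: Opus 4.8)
The plan is to follow Step~1 of the proof of Theorem~\ref{main2} almost verbatim, the only genuinely new ingredient being that convexity of $\O$ replaces the constancy $\phi_{\rho^\ast}=\l^\ast$ on $\de\O$ that was available in the equilibrium case. Concretely, I would fix an arbitrary bounded smooth domain $\co\subset\RN\setminus\overline{\O}$, introduce the restricted functional $I_\co(\phi)=\int_\co(1-\sqrt{1-|\n\phi|^2})\,dx$ over $C^{0,1}_{\phi_\rho}(\co)$ and the set of light rays $K$ exactly as there. The restriction $\phi_\rho|_\co$ minimizes $I_\co$: modifying $\phi_\rho$ only inside $\co$ while keeping its values on $\de\co$ leaves $\langle\rho,\cdot\rangle$ unchanged (since $\supp\rho=\de\O$ is disjoint from $\co$) and changes $\cj$ only through $I_\co$, so global minimality of $\ci_\rho$ forces local minimality of $I_\co$. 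Then \cite[Corollary 4.2]{BS} yields that $\phi_\rho$ is strictly spacelike on $\co\setminus K$, solves the homogeneous equation there, and is affine along every light ray.

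The heart of the argument is to prove $K=\emptyset$. Suppose not, and pick $x\neq y$ in $\de\co$ with $|\phi_\rho(x)-\phi_\rho(y)|=|x-y|$ and $\overline{xy}\subset\co$, normalized so that $\phi_\rho(x)>\phi_\rho(y)$. As in Theorem~\ref{main2}, since $\phi_\rho|_{B_R\setminus\overline{\O}}$ minimizes $I_{B_R\setminus\overline{\O}}$ for every large $R$, \cite[Theorem 3.2]{BS} allows me to extend the analogue of the affine relation \eqref{eq:ray} along the whole maximal segment of the line through $x,y$ that remains inside $B_R\setminus\overline{\O}$. Hence the light ray can be stretched until either (a) at least one endpoint escapes to $\de B_R$, in which case, letting $R\to\infty$, the ray is unbounded in that direction, or (b) both endpoints lie on $\de\O$. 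In case (a) the affine law forces $\phi_\rho\to+\infty$ along the ray, contradicting the boundedness of $\phi_\rho$, which vanishes at infinity by property~(\ref{it:C0}) of Lemma~\ref{lemma21}.

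Case (b) is exactly where convexity enters, and this is the one delicate point of the whole argument. If the two endpoints $p,q$ both lie on $\de\O$, then by convexity of $\overline{\O}$ the open segment joining them is contained in $\overline{\O}$; but by construction that same open segment lies in $B_R\setminus\overline{\O}$ and is therefore disjoint from $\overline{\O}$, while $p\neq q$ makes it nonempty — a contradiction. Thus neither alternative can occur and $K=\emptyset$. With $K$ empty, $\phi_\rho$ is strictly spacelike on all of $\RN\setminus\overline{\O}$ and solves the homogeneous Born--Infeld equation there pointwise, with $C^\infty$-regularity by \cite[Remarks at page 148]{BS}; the boundary identity $\phi=\phi_\rho$ on $\de\O$ is immediate by continuity and the decay at infinity is again property~(\ref{it:C0}) of Lemma~\ref{lemma21}. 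The main obstacle to isolate is precisely case (b): unlike in Theorem~\ref{main2}, $\phi_\rho$ need not be constant on $\de\O$, so one cannot rule out a two-ended ray from its boundary values, and must instead invoke the purely geometric fact that a chord between two boundary points of a convex body cannot avoid the body.
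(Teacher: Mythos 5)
Your proof is correct and follows essentially the same route as the paper's: both run Step~1 of Theorem~\ref{main2} with boundary data $\phi_\rho|_{\partial\Omega}$ in place of the constant $\lambda^\ast$, rule out light rays extending to infinity via the boundedness of $\phi_\rho$, and exclude rays with both endpoints on $\partial\Omega$ by convexity. Your explicit chord argument (an open segment between two boundary points of a convex body lies in $\overline{\Omega}$, hence cannot lie in $\R^N\setminus\overline{\Omega}$) is precisely the geometric fact the paper invokes implicitly when it says such rays are ``a-priori excluded by assuming the convexity of $\Omega$.''
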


\begin{proof} Replacing the constant boundary values $\lambda^\ast$ by $\phi_\rho|_{\partial\Omega}$ one can follow the arguments of Step~1 of Theorem~\ref{main2} until one reaches the two possibilities for light rays. The second kind of light ray extending to infinity leads to a contradiction as before. However, the first kind of light ray would only extend to two points on $\partial\Omega$, where $\phi_\rho$ takes certain values (not necessarily equal to the constant $\lambda^\ast$ like for $\phi_{\rho^\ast}$). Therefore, no contradiction can be reached in general, and thus these kind of light rays must be a-priori excluded by assuming the convexity of $\Omega$.
\end{proof}

The next two propositions lead to a unique characterization of the value $\lambda^\ast$, and hence, to the uniqueness of the equilibrium measure (Corollary~\ref{cor:unique_measure}). We denote by $\nu$ the outer normal to $\partial\Omega$ and write $\de_\nu$ for the normal derivative.

\begin{proposition}\label{pr:lrho}
Let $\O\subset \RN$ be $C^{2,\alpha}$ and $\l\in (0,+\infty)$. Then there exists a unique classical solution $\phi_\l\in \X$ of 
\begin{equation}\label{eq:phil}
\left\{
\begin{array}{rcll}
-\operatorname{div}\left(\displaystyle\frac{\nabla \phi}{\sqrt{1-|\nabla \phi|^2}}\right)&=& 0 & \hbox{in }\mathbb{R}^N\setminus \overline{\Omega},
\\[6mm]
\phi&=&\l & \hbox{in }\overline{\Omega},
\\[2mm]
\displaystyle\lim_{|x|\to \infty}\phi(x)&=& 0.
\end{array}
\right.
\end{equation}
Additionally, $\phi_\lambda\equiv \lambda$ in every bounded connected component of $\R^N\setminus\overline{\Omega}$, and if $Z_0$ denotes the unbounded connected component of $\R^N\setminus\overline{\Omega}$ then $0< \phi_\lambda<\lambda$ in $Z_0$ and $\de_\nu \phi_\l<0$ on $\partial Z_0$. Moreover $\phi_\l $ is the unique solution of \eqref{eq:BI} with $\rho=\rho_\l $, where 
\begin{equation}\label{eq:rhol}
d\rho_\l:=-\frac{\de_\nu \phi_\l(x)}{\sqrt{1-|\de_\nu \phi_\l(x)|^2}}d\sigma, \qquad x\in \partial\Omega,
\end{equation}
is a positive and bounded measure supported on $\de \O$. 
\end{proposition}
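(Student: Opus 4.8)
The plan is to realize $\phi_\l$ as the unique minimizer of the convex functional $\cj$ over the closed convex set $\X_\l:=\{\phi\in\X:\phi=\l\text{ on }\overline{\O}\}$. First I would check $\X_\l\neq\emptyset$: the truncated distance function $x\mapsto\max\{0,\l-\dist(x,\overline{\O})\}$ lies in $\X_\l$, since it has gradient of norm $\le 1$ and compact support, hence finite $D^{1,2}$-energy. The direct method together with Lemma~\ref{lemma21} then yields a minimizer $\phi_\l$, and strict convexity of $p\mapsto 1-\sqrt{1-|p|^2}$ gives uniqueness; a truncation argument ($\phi_\l\mapsto\max\{\phi_\l,0\}$ does not increase $\cj$ and stays in $\X_\l$) shows $\phi_\l\ge 0$. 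To upgrade $\phi_\l$ to a classical solution of \eqref{eq:phil} I would rerun the two steps of the proof of Theorem~\ref{main2} with the constant $\l$ in place of $\l^\ast$: the Bartnik–Simon theory \cite[Corollary 4.2, Theorem 3.2]{BS} shows that $\phi_\l$ is strictly spacelike off the set of light rays and solves the homogeneous equation there, and the light-ray exclusion is identical, since a maximal light ray either runs to infinity (contradicting the decay of $\phi_\l$) or joins two points of $\de\O$ where $\phi_\l\equiv\l$ (contradicting the unit slope of $\phi_\l$ along a nondegenerate ray). Hence $\phi_\l\in C^\infty(\RN\setminus\overline{\O})$, and the barrier construction of Step~2 of Theorem~\ref{main2} gives $\phi_\l\in C^{2,\alpha}(\overline{B}_r(0)\setminus\O)$ with $|\n\phi_\l|<1$ up to $\de\O$.

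Next I would establish the component structure. On a bounded component $Z$ of $\RN\setminus\overline{\O}$, replacing $\phi_\l$ by $\l$ on $Z$ keeps the competitor in $\X_\l$ and does not increase $\cj$, so uniqueness of the minimizer forces $\phi_\l\equiv\l$ on $Z$. On the unbounded component $Z_0$, strict positivity $\phi_\l>0$ follows exactly as in Step~1 of Theorem~\ref{main2} (a nonpositive interior minimum would propagate by the maximum principle to a boundary component where $\phi_\l=\l>0$). The bound $\phi_\l<\l$ in $Z_0$ is the strong maximum principle for the uniformly elliptic homogeneous equation solved by the strictly spacelike $\phi_\l$, whose data are $\l$ on $\de\O$ and $0$ at infinity with $\phi_\l\not\equiv\l$. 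Finally, with $w=\l-\phi_\l$, which is positive in $Z_0$, vanishes on $\de Z_0$ and solves a uniformly elliptic equation up to $\de Z_0$ (by the boundary regularity and strict spacelikeness), Hopf's boundary point lemma — applicable since $\de\O\in C^{2,\alpha}$ enjoys the interior sphere condition — yields $\de_\nu\phi_\l<0$ on $\de Z_0$.

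It remains to identify $\rho_\l$. Because $\phi_\l$ is constant on $\de\O$, its tangential gradient vanishes, so $|\n\phi_\l|=|\de_\nu\phi_\l|<1$ there; combined with $\de_\nu\phi_\l<0$ on $\de Z_0$ and $\de_\nu\phi_\l=0$ on the parts of $\de\O$ bordering bounded components (where $\phi_\l\equiv\l$), the density in \eqref{eq:rhol} is nonnegative and bounded, so $\rho_\l$ is a positive bounded measure supported on $\de\O$, and in particular $\rho_\l\in\X^\ast$. To see that $\phi_\l$ solves \eqref{eq:BI} with datum $\rho_\l$ I would verify \eqref{eq:BI_weak}: for $\psi\in C_c^\infty(\RN)$, since $\n\phi_\l=0$ on $\overline{\O}$ and on the bounded components, the integral reduces to one over $Z_0$; integrating by parts over $Z_0\cap B_r$ and letting $r\to\infty$ (using decay), the divergence term vanishes, and the boundary term on $\de Z_0$, whose outward normal is $-\nu$, equals $\oint_{\de\O}\big(-\de_\nu\phi_\l/\sqrt{1-|\de_\nu\phi_\l|^2}\big)\,\psi\,d\sigma=\langle\rho_\l,\psi\rangle$. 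Thus $\phi_\l$ is a weak solution of \eqref{eq:BI} with $\rho=\rho_\l$, and uniqueness of such a solution is \cite[Proposition~2.6, Theorem~1.3]{BDP}.

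The main obstacle is the regularity step, namely producing a genuinely classical, strictly spacelike solution up to $\de\O$, so that $\de_\nu\phi_\l$ exists and the integration by parts is justified; this rests entirely on the Bartnik–Simon machinery and the barrier construction already carried out for Theorem~\ref{main2}, and the only point to verify is that every ingredient there used solely the constancy of the boundary trace on $\de\O$ and the decay at infinity, both of which hold here with $\l$ in place of $\l^\ast$. The light-ray exclusion for a possibly nonconvex $\O$ is painless precisely because the boundary values equal the single constant $\l$. Uniqueness of the classical solution of \eqref{eq:phil} can be read off either from the strict convexity of $\cj$ or, equivalently, from the comparison principle for the Born–Infeld operator applied on $Z_0$.
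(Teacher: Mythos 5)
Your proposal is correct and follows essentially the same route as the paper: rerun the two steps of the proof of Theorem~\ref{main2} with the constant $\l$ in place of $\l^\ast$ (Bartnik--Simon light-ray exclusion and the barrier construction for boundary regularity), then the maximum principle and Hopf's lemma for the component structure and the sign of $\de_\nu\phi_\l$, and finally integration by parts to identify $\rho_\l$ and invoke the uniqueness of minimizers from \cite{BDP}. The only (harmless) deviations are that you make explicit the global variational construction of $\phi_\l$ as the minimizer of $\cj$ over $\{\phi\in\X:\phi=\l\text{ on }\overline{\O}\}$, which the paper leaves implicit, and that you handle the bounded components by a truncation-competitor argument rather than by the maximum principle.
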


\begin{remark} Note that $\rho_\lambda$ defined in \eqref{eq:rhol} is a positive and bounded measure on $\partial\Omega$ with finite total mass, but not necessarily a probability measure.
\end{remark}

\begin{proof}[Proof of Proposition \ref{pr:lrho}]
We argue as in the proof of Theorem \ref{main2}. Replacing $\lambda^\ast$ by $\lambda$ in Step~1 we obtain the existence of a unique classical solution $\phi_\l$  of \eqref{eq:phil}, which is strictly spacelike and positive in $\R^N\setminus\overline{\Omega}$. Step~2 then establishes, that $\phi_\l$ is strictly spacelike and regular up to $\partial\Omega$ and satisfies $\phi_\l\in C^{2,\alpha}(\overline{B}_r(0)\setminus\Omega)$ for sufficiently large $r>0$. In particular $\partial_\nu \phi_\lambda$ is well defined on $\partial\Omega$. Since \eqref{eq:phil} is locally uniformly elliptic we can apply the classical maximum principle  %as in Theorem~\ref{main2}, Step 1, 
to show that $0< \phi_\lambda<\lambda$ in the unbounded connected component $Z_0$ of $\R^N\setminus\overline{\Omega}$ and $\phi_\lambda\equiv \lambda$ in every bounded connected component of $\R^N\setminus\overline{\Omega}$. This implies  $\de_\nu \phi_\l(x)\le 0$ for all $x\in \de \O$, and the Hopf Lemma even allows to conclude $\de_\nu \phi_\l<0$ on $\partial Z_0\subset\partial\Omega$, where $\nu$ is the exterior unit normal on $\partial\Omega$.
\\
Finally,  let us show that $\phi_\l$ solves \eqref{eq:BI} with right-hand side $\rho=\rho_\l$ given by \eqref{eq:rhol}. Since $\phi_\l$ is constant in $\overline{\Omega}$, for any $\vfi\in C_c^\infty(\RN)$ we have
\begin{align*}
 \irn \frac{\nabla \phi_\l\cdot \n \vfi}{\sqrt{1-|\nabla \phi_\l|^2}}
&=
%{\color{green} \cancel{\int_{\overline{\Omega}} \frac{\nabla \phi_\l \cdot \n \vfi}{\sqrt{1-|\nabla \phi_\l|^2}}}}
\int_{\RN \setminus \overline{\Omega}} \frac{\nabla \phi_\l \cdot \n \vfi}{\sqrt{1-|\nabla \phi_\l|^2}}
\\
&=\int_{\RN \setminus \overline{\Omega}} - \dv \left(\frac{\nabla \phi_\l}{\sqrt{1-|\nabla \phi_\l|^2}} \right)\vfi
%{\color{red}+\int_{\de \O} \frac{\de_n \phi_\l}{\sqrt{1-|\n \phi_\l|^2}} \vfi \ d\s}
%\\
%&{\color{red}=}
-\int_{\de \O} \frac{\de_\nu \phi_\l}{\sqrt{1-|\de_\nu \phi_\l|^2}} \vfi \ d\s\\
&=
-\int_{\de \O} \frac{\de_\nu \phi_\l}{\sqrt{1-|\de_\nu \phi_\l|^2}} \vfi \ d\s
\end{align*}
which shows that $\phi_\lambda$ is the (unique) weak solution of \eqref{eq:BI} with $\rho_\lambda$ given by \eqref{eq:rhol}.
%{\color{red}Here $\de_n \phi_\l $ denotes the inner normal derivative of $\phi_\l $.}
\end{proof}

The next proposition shows that $\l ^\ast$ is the unique value $\l $ such that  $\rho_\l $ is actually a probability measure.

\begin{proposition}\label{pr:rhol}
Let $\O\subset \RN$ be of class $C^{2,\a}$  and for all $\lambda>0$ let $\phi_\lambda$ be the unique solution of \eqref{eq:phil}. Then  the value $\l ^\ast$ from Proposition~\ref{cacca} is the unique value of $\l \in (0,+\infty)$ such that the  measure
\[
d\rho_\l=-\frac{\de_\nu \phi_\l}{\sqrt{1-|\de_\nu \phi_\l|^2}} d\sigma, \qquad x\in \partial\Omega,
\] 
is a probability measure  on $\partial\Omega$.
\end{proposition}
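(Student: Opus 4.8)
The plan is to split the statement into an existence claim, that $\rho_{\l^\ast}$ is a probability measure, and a uniqueness claim, that no other $\l$ produces one; the uniqueness will rest on the strict monotonicity of the total charge as a function of $\l$. For existence I would identify $\rho^\ast$ with $\rho_{\l^\ast}$. By Theorem~\ref{main} and Step~1 of the proof of Theorem~\ref{main2}, the equilibrium potential $\phi_{\rho^\ast}$ equals $\l^\ast$ on $\overline{\O}$, solves the homogeneous equation \eqref{eq:oc} in $\RN\setminus\overline{\O}$ and vanishes at infinity; thus $\phi_{\rho^\ast}$ solves \eqref{eq:phil} with $\l=\l^\ast$, and the uniqueness in Proposition~\ref{pr:lrho} forces $\phi_{\rho^\ast}=\phi_{\l^\ast}$. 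Since Proposition~\ref{pr:lrho} shows $\phi_{\l^\ast}$ is the weak solution of \eqref{eq:BI} with datum $\rho_{\l^\ast}$ whereas Theorem~\ref{main2} shows $\phi_{\rho^\ast}$ is the weak solution with datum $\rho^\ast$, and since in \eqref{eq:BI_weak} the right-hand side is determined by the potential, the equality of potentials gives $\rho^\ast=\rho_{\l^\ast}$; as $\rho^\ast\in P(\de\O)$, this already shows $\rho_{\l^\ast}$ is a probability measure.

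For uniqueness I would introduce the total charge
\[
Q(\l):=\oint_{\de\O} d\rho_\l=-\oint_{\de\O}\frac{\de_\nu\phi_\l}{\sqrt{1-|\de_\nu\phi_\l|^2}}\,d\s .
\]
Since $Q(\l^\ast)=1$ by the previous step, it suffices to prove that $Q$ is strictly increasing on $(0,+\infty)$, for then $\l^\ast$ is the only solution of $Q(\l)=1$. Because $\phi_\l\equiv\l$ on every bounded component of $\RN\setminus\overline{\O}$ (Proposition~\ref{pr:lrho}), only the part $\de Z_0$ of $\de\O$ bordering the unbounded component $Z_0$ contributes, where $\de_\nu\phi_\l<0$. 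With $g(s):=s/\sqrt{1-s^2}$, strictly increasing on $[0,1)$, one has $Q(\l)=\oint_{\de Z_0} g(|\de_\nu\phi_\l|)\,d\s$, so it is enough to show that $|\de_\nu\phi_\l|$ is strictly increasing in $\l$ at each point of $\de Z_0$.

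To do so, fix $\l_1<\l_2$ and set $w:=\phi_{\l_2}-\phi_{\l_1}$ on $\overline{Z_0}$. Both potentials solve the homogeneous Born--Infeld equation in $Z_0$, equal $\l_i$ on $\de Z_0$, vanish at infinity, and are strictly spacelike with $|\n\phi_{\l_i}|\le 1-\t$ up to the boundary on each $\overline{B}_r(0)\setminus\O$ (Step~2 of Theorem~\ref{main2}, as transcribed in Proposition~\ref{pr:lrho}). Comparing $\phi_{\l_2}$ with both $\phi_{\l_1}$ and $\phi_{\l_1}+(\l_2-\l_1)$ --- both solutions of the homogeneous equation --- by the comparison principle for the strictly convex Born--Infeld functional gives $0<w<\l_2-\l_1$ in $Z_0$, so $w$ attains its maximum $\l_2-\l_1$ on $\de Z_0$. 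Near $\de Z_0$ I would linearize the flux $F(p)=p/\sqrt{1-|p|^2}$ along the segment from $\n\phi_{\l_1}$ to $\n\phi_{\l_2}$, writing $F(\n\phi_{\l_2})-F(\n\phi_{\l_1})=A(x)\n w$ with $A(x)=\int_0^1 DF\big((1-t)\n\phi_{\l_1}+t\n\phi_{\l_2}\big)\,dt$; since the segment stays in $\{|p|\le 1-\t\}$, the matrix $A$ is uniformly elliptic and H\"older continuous there and $w$ solves $\dv(A(x)\n w)=0$. The Hopf Lemma, using the interior ball condition from $\de\O\in C^{2,\a}$, then gives $\de_\nu w<0$ on $\de Z_0$, i.e.\ $\de_\nu\phi_{\l_2}<\de_\nu\phi_{\l_1}<0$ and hence $|\de_\nu\phi_{\l_2}|>|\de_\nu\phi_{\l_1}|$. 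Applying $g$ and integrating yields $Q(\l_2)>Q(\l_1)$, as required.

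The main obstacle is this strict monotonicity, precisely the passage from the ordering $\phi_{\l_1}<\phi_{\l_2}$ to a \emph{strict} ordering of the normal derivatives: the linearization must be set up so that the resulting operator is uniformly elliptic up to $\de\O$ --- which is exactly where the strict spacelikeness and $C^{2,\a}$ regularity furnished by Theorem~\ref{main2} enter --- before the Hopf Lemma can be invoked. By contrast, the identification $\rho^\ast=\rho_{\l^\ast}$ of the first part is comparatively routine once the uniqueness results of Theorem~\ref{main2} and Proposition~\ref{pr:lrho} are in hand.
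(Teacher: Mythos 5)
Your proposal is correct and follows essentially the same route as the paper: first the identification $\rho^\ast=\rho_{\lambda^\ast}$ via the uniqueness statements of Theorem~\ref{main2} and Proposition~\ref{pr:lrho}, then uniqueness via strict monotonicity of the total mass $\lambda\mapsto\oint_{\partial\Omega}d\rho_\lambda$, proved by linearizing the flux along a segment of gradients, using uniform ellipticity up to $\partial\Omega$ (from the strict spacelikeness of Theorem~\ref{main2}), and concluding with the maximum principle and the Hopf lemma on the unbounded component. The only cosmetic difference is that you compare via $w=\phi_{\lambda_2}-\phi_{\lambda_1}$ (maximum attained on $\partial\Omega$), whereas the paper uses $\psi=\phi_{\lambda_1}+(\lambda_2-\lambda_1)-\phi_{\lambda_2}$ (minimum attained on $\partial\Omega$), which is the identical argument up to a sign.
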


\begin{proof}
Step 1 in Theorem~\ref{main2} shows that $\phi_{  \rho^*}=\phi_{\lambda^*}$. Moreover, since  Theorem~\ref{main2} and Proposition~\ref{pr:lrho} also imply that $\phi_{\rho^*}$ and $\phi_{\lambda^*}$ are solutions of \eqref{eq:BI} with right-hand sides $\rho^*$ and $\rho_{\lambda^*}$ respectively, we can conclude that $\rho^*=\rho_{\lambda^*}$. Thus the existence of a value $\l$ such that $\rho_\lambda$ is a probability measure on $\partial\Omega$ is proven and we just need to show its uniqueness. We can conclude if we prove that the map
\[
\Upsilon: \l \longmapsto  \int_{\partial\Omega} d\rho_\lambda \quad \mbox{ for } \lambda\in (0,+\infty)
\]
is strictly increasing. For this purpose let $\l_1,\l_2\in (0,+\infty)$ with $\l_1<\l_2$ and, for $i=1,2$, $\phi_i:=\phi_{\l_i}$ be the corresponding solutions of \eqref{eq:phil} and   $\rho_i:=\rho_{\l_i}$. Let us observe that $\tilde \phi_1:=\phi_1+\l_2-\l_1$ and $\phi_2$ satisfy both the following problem 
\begin{equation*}%\label{eq:phil2}
\left\{\begin{array}{rcll}
-\operatorname{div}\left(\displaystyle\frac{\nabla \phi}{\sqrt{1-|\nabla \phi|^2}}\right)&=& 0 & \hbox{in }\mathbb{R}^N\setminus \overline{\Omega},
\\[6mm]
\phi&=&\l_2 & \hbox{in }\overline{\Omega},
\end{array}
\right.
\end{equation*}
while
\[
\lim_{|x|\to \infty}\tilde \phi_1(x)= \l_2-\l_1>0=\lim_{|x|\to \infty}\phi_2(x).
\]
Next we apply a comparison argument to $\tilde \phi_1$ and $\phi_2$, cf. \cite[Theorem~10.1]{GT}. For this purpose, let $F(\xi) = (1-|\xi|^2)^{-1/2}$, $\xi\in \R^N$ with $|\xi|<1$. Notice that for $\psi:= \tilde\phi_1-\phi_2$ and $\chi_t := t\tilde\phi_1 + (1-t)\phi_2$, $t\in [0,1]$ we have 
$$
\frac{\nabla \tilde \phi_1}{\sqrt{1-|\nabla \tilde \phi_1|^2}}-\frac{\nabla \phi_2}{\sqrt{1-|\nabla \phi_2|^2}}=F(\nabla \tilde\phi_1)\nabla\tilde \phi_1- F(\nabla\phi_2)\nabla\phi_2 = \int_0^1 \frac{d}{dt}\bigl( F(\nabla \chi_t)\nabla\chi_t \bigr)\,dt = a(x)\nabla\psi,
$$
where 
$$
a(x):= \int_0^1 \left(F(\nabla\chi_t(x)) + F'(\nabla\chi_t(x))\cdot \nabla\chi_t(x)\right) dt = \int_0^1 (1-|\nabla\chi_t(x)|^2)^{-3/2}\,dt. 
$$
Arguing as in Theorem~\ref{main2} the function $a(x)>0$ is bounded in bounded subsets of $\R^N\setminus \overline{\Omega}$. Hence, $\psi$ satisfies the locally uniformly elliptic equation
\begin{equation*}
\left\{\begin{array}{rcll}
-\operatorname{div}(a(x)\nabla\psi)&=& 0 & \hbox{in }\mathbb{R}^N\setminus \overline{\Omega},
\vspace{\jot}\\
\psi&=& 0 & \hbox{in }\overline{\Omega}, \vspace{\jot}\\
\displaystyle\lim_{|x|\to \infty}\psi(x) &=& \lambda_2-\lambda_1>0.& 
\end{array}
\right.
\end{equation*}
Therefore, the maximum principle applies and states that $\psi$ attains its  zero minimum on $\partial\Omega$ and its positive maximum at infinity. On every bounded connected component $Z$ of $\R^N\setminus\overline{\Omega}$ we get that $\psi\equiv 0$ since $\psi =0$ on $\partial Z$. On the unbounded component $Z_0$ of $\R^N\setminus\overline{\Omega}$ we get $0<\psi<\lambda_2-\lambda_1$ and the Hopf Lemma implies that $\partial_\nu \psi>0$ on $\partial Z_0\subset \partial\Omega$, where $\nu$ is the exterior unit normal on $\partial\Omega$. Thus, $\partial_\nu \phi_1 =\de_\nu \tilde \phi_1> \partial_\nu \phi_2$ on $\partial Z_0$ which implies that $\Upsilon(\l_1)<\Upsilon(\l_2)$ as claimed.
\end{proof}

An immediate consequence of Proposition \ref{pr:rhol} is the following  uniqueness result.
\begin{corollary} \label{cor:unique_measure}
If $\O\subset \RN$ is of class $C^{2,\a}$ then the equilibrium measure is unique.
\end{corollary}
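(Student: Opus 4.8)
The plan is to read off the equilibrium measure from the equilibrium potential, exploiting two facts established above: the equilibrium potential is \emph{unique} (Corollary~\ref{cor:unique_potential}), and on a $C^{2,\alpha}$ domain this potential is a genuine weak solution of \eqref{eq:BI*} (Theorem~\ref{main2}). The guiding idea is that, although the map $\rho\mapsto\phi_\rho$ is generally not invertible, the weak formulation \eqref{eq:BI_weak} lets us recover the right-hand side $\rho$ once we know that $\phi_{\rho^\ast}$ actually solves the equation.

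First I would take two equilibrium measures $\rho^\ast$ and $\rho^{\ast\ast}$ and invoke Corollary~\ref{cor:unique_potential} to get $\phi_{\rho^\ast}=\phi_{\rho^{\ast\ast}}=:\phi$ and $\lambda^\ast=\lambda^{\ast\ast}$. Next, since $\partial\Omega\in C^{2,\alpha}$, Theorem~\ref{main2} tells me that $\phi$ is simultaneously a weak solution of \eqref{eq:BI} with datum $\rho^\ast$ and with datum $\rho^{\ast\ast}$. Testing against an arbitrary $\psi\in C_c^\infty(\RN)$ and using that the left-hand side depends only on $\phi$,
\[
\langle\rho^\ast,\psi\rangle=\int_{\RN}\frac{\nabla\phi\cdot\nabla\psi}{\sqrt{1-|\nabla\phi|^2}}\,dx=\langle\rho^{\ast\ast},\psi\rangle,
\]
so $\rho^\ast$ and $\rho^{\ast\ast}$ agree on $C_c^\infty(\RN)$ and hence, by density in $C_c^0(\RN)$, as Borel measures.

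An equivalent route, which I would mention as a cross-check, is to go through the explicit representation: the proof of Proposition~\ref{pr:rhol} already shows $\rho^\ast=\rho_{\lambda^\ast}$, and since $\lambda^\ast$ is the unique admissible value (Proposition~\ref{pr:rhol}) and $\phi_{\lambda^\ast}$ is the unique solution of \eqref{eq:phil} (Proposition~\ref{pr:lrho}), the normal-derivative formula \eqref{eq:rhol} determines $\rho_{\lambda^\ast}$ unambiguously.

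I do not expect a genuine analytic obstacle here, since all the work has been done in the preceding propositions; the corollary is essentially bookkeeping. The one point I would be careful to highlight is \emph{where} the hypothesis $\partial\Omega\in C^{2,\alpha}$ is used: for general domains the equilibrium potential need not solve \eqref{eq:BI} weakly, so the measure cannot be reconstructed from the potential and uniqueness may genuinely fail. It is exactly Theorem~\ref{main2} — which rests on the Bartnik--Simon regularity theory and requires the smooth boundary — that promotes the minimizer to a weak solution and thereby pins the measure down.
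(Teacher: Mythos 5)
Your proposal is correct. Your primary route — take two equilibrium measures, invoke Corollary~\ref{cor:unique_potential} to identify their potentials, apply Theorem~\ref{main2} to each so that one and the same function $\phi$ is a weak solution of \eqref{eq:BI} with both right-hand sides, and then separate the measures by testing the weak formulation against $C_c^\infty(\RN)$ — is sound; the density step is unproblematic since both are finite Borel measures on the compact set $\partial\Omega$. It is, however, organized differently from the paper. The paper states the corollary as an immediate consequence of Proposition~\ref{pr:rhol}: there, every equilibrium measure is identified with the explicit measure $\rho_{\lambda^\ast}$ of \eqref{eq:rhol} (built from the normal derivative of the solution $\phi_{\lambda^\ast}$ of the exterior Dirichlet problem, Proposition~\ref{pr:lrho}), and $\lambda^\ast$ is pinned down as the \emph{unique} level for which $\rho_\lambda$ has total mass one, via a comparison/Hopf-lemma argument showing the map $\lambda\mapsto\rho_\lambda(\partial\Omega)$ is strictly increasing. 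Your argument is in effect the smooth-domain instantiation of the paper's Proposition~\ref{pr:unici} (which the authors formulate for rough domains under the extra hypothesis that the equilibrium potential solves the PDE — a hypothesis that Theorem~\ref{main2} supplies for free when $\partial\Omega\in C^{2,\alpha}$); it is shorter and needs none of the monotonicity machinery. What the paper's longer route buys is more than uniqueness: the explicit representation of $\rho^\ast$ as a weighted surface measure and the characterization of $\lambda^\ast$, both of which are used later (notably in the ball characterization of Section~\ref{sec:ball}). Your remark on where the $C^{2,\alpha}$ hypothesis enters — promoting the minimizer to a weak solution via Bartnik--Simon regularity, without which the measure cannot be read off from the potential — is exactly the right diagnosis.
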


Without regularity assumptions on the domain we are able to show only some partial result about the uniqueness of the equilibrium measure. This result is strictly related to the conjecture that any minimizer of the  functional $\mathcal{I}_\rho$ is also a (weak) solution of the corresponding PDE.

\begin{proposition}\label{pr:unici}
Let $\Omega\subset \R^N$ be bounded with no further regularity of $\partial\Omega$. Suppose that  the equilibrium potential weakly solves \eqref{eq:BI}. Then the equilibrium measure is unique.
\end{proposition}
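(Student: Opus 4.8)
The plan is to combine the uniqueness of the equilibrium \emph{potential} (Corollary~\ref{cor:unique_potential}) with the elementary observation that the source term on the right-hand side of \eqref{eq:BI} is determined by the potential alone. First I would fix two arbitrary equilibrium measures $\rho^\ast$ and $\rho^{\ast\ast}$ and let $\phi_{\rho^\ast}$, $\phi_{\rho^{\ast\ast}}$ be the corresponding equilibrium potentials. By Corollary~\ref{cor:unique_potential} the equilibrium potential is independent of the chosen equilibrium measure, so $\phi_{\rho^\ast}=\phi_{\rho^{\ast\ast}}=:\phi^\ast$.

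Next I would invoke the hypothesis. By Definition~\ref{def:weak_solution}, the assertion that the equilibrium potential associated with $\rho^\ast$ weakly solves \eqref{eq:BI} means that \eqref{eq:BI_weak} holds with right-hand side $\rho^\ast$, namely
\[
\irn \frac{\n\phi^\ast\cdot\n\psi}{\sqrt{1-|\n\phi^\ast|^2}}\,dx=\langle\rho^\ast,\psi\rangle\qquad\text{for all }\psi\in C_c^\infty(\RN).
\]
Since $\rho^{\ast\ast}$ is likewise an equilibrium measure and its equilibrium potential is the same $\phi^\ast$, the hypothesis yields the identical relation with $\rho^{\ast\ast}$ in place of $\rho^\ast$. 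The crucial point is that the left-hand side depends only on $\phi^\ast=\phi_{\rho^\ast}=\phi_{\rho^{\ast\ast}}$ and is therefore the same in both relations; subtracting them gives $\langle\rho^\ast,\psi\rangle=\langle\rho^{\ast\ast},\psi\rangle$ for every $\psi\in C_c^\infty(\RN)$. As $\rho^\ast$ and $\rho^{\ast\ast}$ are finite Borel measures supported on the compact set $\de\O$, and $C_c^\infty(\RN)$ is dense (uniformly on $\de\O$) in the continuous functions, the Riesz representation theorem forces $\rho^\ast=\rho^{\ast\ast}$, which is the claimed uniqueness.

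There is no genuine analytic obstacle in this argument: it is short precisely because the hard existence, regularity and ``constant-on-$\overline{\O}$'' information has already been absorbed into Corollary~\ref{cor:unique_potential}. The only conceptual subtlety — and the reason the statement must be conditional — is that, without the weak-solution hypothesis, two distinct measures might a priori produce the same potential, so that knowing $\phi^\ast$ does not by itself determine $\rho^\ast$. The hypothesis removes exactly this ambiguity by identifying every equilibrium measure with the single distribution $-\dv\bigl(\n\phi^\ast/\sqrt{1-|\n\phi^\ast|^2}\bigr)$ read off from the unique potential. I would finally stress that the whole argument uses only the weak formulation and the uniqueness of $\phi^\ast$, and hence needs no regularity of $\de\O$, in contrast with Corollary~\ref{cor:unique_measure}.
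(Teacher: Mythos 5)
Your argument has a genuine gap at the step where you pass from $\rho^\ast$ to $\rho^{\ast\ast}$. The hypothesis of the proposition, as the paper uses it, is that \emph{some} equilibrium measure $\rho^\ast$ has the property that $\phi_{\rho^\ast}$ weakly solves \eqref{eq:BI} \emph{with right-hand side $\rho^\ast$}. For a second equilibrium measure $\rho^{\ast\ast}$, all you know is that the common potential $\phi^\ast=\phi_{\rho^{\ast\ast}}$ is the \emph{minimizer} of $\mathcal{I}_{\rho^{\ast\ast}}$ on $\mathcal{X}$ --- and the recurring theme of this paper is precisely that minimizers are \emph{not} known to be weak solutions in general (the gradient constraint $|\nabla\phi|\le 1$ may be active, so one cannot compute variations in all directions). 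Your sentence ``the hypothesis yields the identical relation with $\rho^{\ast\ast}$ in place of $\rho^\ast$'' therefore assumes exactly what needs to be proved; it would be justified only under a strictly stronger reading of the hypothesis (that \emph{every} equilibrium measure's potential solves the equation with that measure as source), under which the proposition becomes nearly contentless.

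The paper fills this gap as follows. Since $\phi_{\rho^\ast}$ weakly solves \eqref{eq:BI} with source $\rho^\ast$, it satisfies the energy identity
\begin{equation*}
\int_{\R^N} \frac{|\nabla\phi_{\rho^\ast}|^2}{\sqrt{1-|\nabla\phi_{\rho^\ast}|^2}}\,dx = \langle \rho^\ast,\phi_{\rho^\ast}\rangle .
\end{equation*}
By Corollary~\ref{cor:unique_potential} one has $\langle \rho^\ast,\phi_{\rho^\ast}\rangle=\langle \rho^{\ast\ast},\phi_{\rho^{\ast\ast}}\rangle=\lambda^\ast$, so the same identity holds with $\rho^{\ast\ast}$ and $\phi_{\rho^{\ast\ast}}$. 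For a general minimizer this identity is only known with ``$\leq$'' (cf.\ the end of Remark~\ref{rem:rel_e_h}); the point of \cite[Remark~2.8]{BDP} is that equality is \emph{sufficient} for the minimizer of $\mathcal{I}_{\rho^{\ast\ast}}$ to be a weak solution of \eqref{eq:BI} with source $\rho^{\ast\ast}$. Only after this upgrade do both weak formulations hold, and then your final subtraction argument (which coincides with the paper's conclusion) correctly forces $\rho^\ast=\rho^{\ast\ast}$. So your closing step is fine, but the conceptual core of the proof --- transferring the weak-solution property from one equilibrium measure to the other via the energy identity --- is missing from your proposal.
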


\begin{proof} Let $\rho^\ast$ be an equilibrium measure for which $\phi_{\rho^\ast}$ weakly solves \eqref{eq:BI} with right-hand side $\rho^\ast$. Let $\rho^{\ast\ast}$ be any other equilibrium measure. From Corollary~\ref{cor:unique_potential} we know that the equilibrium potentials $\phi_{\rho^\ast}$ and $\phi_{\rho^{\ast\ast}}$ coincide.  Since $\phi_{\rho^\ast}$ solves \eqref{eq:BI}, we have in particular
\begin{equation} \label{eq:nec_suff_for_sol}
\int_{\R^N} \frac{|\nabla\phi_{\rho^\ast}|^2}{\sqrt{1-|\nabla\phi_{\rho^\ast}|^2}}\,dx = \langle \rho^\ast,\phi_{\rho^\ast}\rangle.
\end{equation} 
But since $\langle \rho^\ast,\phi_{\rho^\ast}\rangle=\langle \rho^{\ast\ast},\phi_{\rho^{\ast\ast}}\rangle$ we get that \eqref{eq:nec_suff_for_sol} also holds for $\phi_{\rho^{\ast\ast}}$. By \cite[Remark~2.8]{BDP} this implies that $\phi_{\rho^{\ast\ast}}$ also solves \eqref{eq:BI} with right-hand side $\rho^{\ast\ast}$. Hence $\phi_{\rho^\ast}$, $\phi_{\rho^{\ast\ast}}$ are (identical) weak solutions of \eqref{eq:BI} with right-hand sides $\rho^\ast$, $\rho^{\ast\ast}$, and therefore these measures coincide.
\end{proof}

We conclude this section with the following observation: the functionals $\mathcal{E}$ and { $\mathcal{K}$ (cf. Remark~\ref{rem:rel_e_h})} coincide on solutions of \eqref{eq:BI} but they can differ at a minimizer $\phi_\rho$ of $\ci_\rho$ in case it is not a solution. However, since { ${\mathcal K}$} is well defined on each minimizer of the functional $\ci_\rho$ for all $\rho\in\X^*$ we can also study the minimization of { ${\mathcal K}(\phi_\rho)$} with respect to $\rho\in P(\partial\Omega)$. The result is given next.

\begin{proposition}
\label{prop314}
There exists a measure $\tilde \rho$ such that
\begin{equation}
\label{eqrt}
{\mathcal{K}}(\phi_{\tilde\rho}) =\inf_{\rho\in P(\partial\Omega)} {\mathcal{K}}(\phi_\rho).
\end{equation}
If $\phi_{ \tilde \rho}$ and $\phi_{  \rho^*}$ are both solutions of \eqref{eq:BI} with the respective measures, then $\tilde{\rho}=\rho^*$.
\end{proposition}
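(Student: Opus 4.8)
The plan is to establish the two assertions separately, reusing the direct-method machinery already set up for Theorem~\ref{th:exis}. For existence I would start from a minimizing sequence $\{\rho_k\}_{k\in\N}\subset P(\partial\Omega)$ for $\rho\mapsto\mathcal{K}(\phi_\rho)$. Since $\partial\Omega$ is compact, $\{\rho_k\}_{k\in\N}$ is tight, so after passing to a subsequence $\rho_k\rightharpoonup\tilde\rho$ weakly in the sense of measures with $\tilde\rho\in P(\partial\Omega)$. By Lemma~\ref{continuous_dependance} this forces $\phi_{\rho_k}\to\phi_{\tilde\rho}$ strongly in $D^{1,2}(\RN)$, hence $\nabla\phi_{\rho_k}\to\nabla\phi_{\tilde\rho}$ in $L^2$ and, along a further subsequence, almost everywhere on $\RN$.

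The decisive point is then lower semicontinuity of $\mathcal{K}$. Because the integrand $\xi\mapsto(1-|\xi|^2)^{-1/2}-1$ is non-negative on the closed unit ball and $|\nabla\phi_{\rho_k}|\le 1$, the a.e.\ convergence of the gradients together with Fatou's lemma yields
\[
\mathcal{K}(\phi_{\tilde\rho})=\int_{\RN}\left(\frac{1}{\sqrt{1-|\nabla\phi_{\tilde\rho}|^2}}-1\right)dx\le\liminf_{k\to\infty}\int_{\RN}\left(\frac{1}{\sqrt{1-|\nabla\phi_{\rho_k}|^2}}-1\right)dx=\inf_{\rho\in P(\partial\Omega)}\mathcal{K}(\phi_\rho).
\]
Since $\tilde\rho\in P(\partial\Omega)$ the reverse inequality is automatic, so $\tilde\rho$ attains the infimum and \eqref{eqrt} holds. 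I note that this argument is insensitive to whether the infimum is finite, so no separate finiteness discussion is needed.

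For the second assertion I would exploit the identity $\mathcal{E}(\phi_\rho)=\mathcal{K}(\phi_\rho)$, valid whenever $\phi_\rho$ solves \eqref{eq:BI} (cf.\ Remark~\ref{rem:rel_e_h}), in combination with the two minimality properties: $\rho^*$ minimizes $\mathcal{E}(\phi_\cdot)$ while $\tilde\rho$ minimizes $\mathcal{K}(\phi_\cdot)$. Under the hypothesis that both $\phi_{\tilde\rho}$ and $\phi_{\rho^*}$ solve \eqref{eq:BI} with their respective measures, these facts chain together as
\[
\mathcal{E}(\phi_{\rho^*})\le\mathcal{E}(\phi_{\tilde\rho})=\mathcal{K}(\phi_{\tilde\rho})\le\mathcal{K}(\phi_{\rho^*})=\mathcal{E}(\phi_{\rho^*}),
\]
forcing every inequality to be an equality. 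In particular $\mathcal{E}(\phi_{\tilde\rho})=\mathcal{E}(\phi_{\rho^*})=\inf_\rho\mathcal{E}(\phi_\rho)$, so $\tilde\rho$ is itself an equilibrium measure for $\mathcal{E}$. The uniqueness of the equilibrium potential (Corollary~\ref{cor:unique_potential}) then gives $\phi_{\tilde\rho}=\phi_{\rho^*}$. Writing the weak formulation \eqref{eq:BI_weak} for both potentials and subtracting, the identical left-hand sides yield $\langle\tilde\rho,\psi\rangle=\langle\rho^*,\psi\rangle$ for all $\psi\in C_c^\infty(\RN)$, whence $\tilde\rho=\rho^*$.

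The main obstacle is the lower semicontinuity of $\mathcal{K}$ in the existence step: unlike $\mathcal{J}$, the integrand of $\mathcal{K}$ is \emph{unbounded} as $|\nabla\phi|\uparrow 1$, so one cannot simply invoke continuity or convexity of the functional under weak convergence. The resolution is that non-negativity of the integrand, combined with the a.e.\ convergence of gradients supplied by the strong $D^{1,2}$-convergence in Lemma~\ref{continuous_dependance}, reduces the matter to a direct application of Fatou's lemma; the remaining manipulations in both parts are formal consequences of the results already in hand.
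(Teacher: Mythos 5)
Your proof is correct and follows essentially the same route as the paper: a minimizing sequence made tight by compactness of $\partial\Omega$, strong $D^{1,2}(\R^N)$-convergence of the potentials via Lemma~\ref{continuous_dependance}, and Fatou-based lower semicontinuity of $\mathcal{K}$ (which is precisely how the paper's Remark~\ref{properties_of_K} justifies it), followed by the same closed chain of (in)equalities showing that $\tilde\rho$ is an equilibrium measure for $\mathcal{E}$. The only cosmetic difference is the final identification $\tilde\rho=\rho^\ast$: after invoking Corollary~\ref{cor:unique_potential} you subtract the two weak formulations of \eqref{eq:BI} directly, whereas the paper cites Proposition~\ref{pr:unici}, whose proof amounts to the same computation plus a step you do not need, since your hypotheses already guarantee that both potentials solve \eqref{eq:BI}.
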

\begin{proof}
By Remark~\ref{rem:rel_e_h} we know that ${ \mathcal{K}}(\phi_\rho)\leq \mathcal{E}(\phi_\rho)$ for every $\rho\in \X^\ast$. Moreover, if we take a minimizing sequence $\{\rho_k\}_{k\in\N}\subset P(\partial\Omega)$ for {$\mathcal{K}$},
 there exists $\tilde \rho\in P(\partial\Omega)$ such that, up to a subsequence, $\rho_k \rightharpoonup \tilde \rho$ weakly in the sense of measures and, by Lemma~\ref{continuous_dependance},  $\phi_{\rho_k} \to \phi_{\tilde \rho}$ strongly in $D^{1,2}(\R^N)$.
%{\color{red} $\phi_{\rho_k}\rightharpoonup \phi$ weakly in $X$ as $k\to \infty$ for some $\rho^\ast\in P(\partial\Omega)$ and some $\phi\in X$. By the Arz\'{e}la-Ascoli Theorem, we know additionally that $\phi_{\rho_k}\to \phi$ uniformly on $\partial\Omega$ which implies in particular that 
%\begin{equation}
%\label{convergence_pairing}
%\langle  \rho_k,\phi_{\rho_k}\rangle \to \langle \rho^\ast,\phi\rangle \mbox{ as } k\to \infty.
%\end{equation}
%
%\medskip
%
%First we show that $\phi=\phi_{\rho^\ast}$. By weak lower semicontinuity of $I_{\rho^\ast}$ and the convergence statement \eqref{convergence_pairing} we see that
%$$
%I_{\rho^\ast}(\phi) \leq \liminf_{k\in \N} I_{\rho^\ast}(\phi_{\rho_k}) = \liminf_{k\in \N} I_{\rho_k}(\phi_{\rho_k}).
%$$
%Therefore, for every $\psi\in X$ 
%$$
%I_{\rho^\ast}(\phi) \leq \liminf_{k\in \N} I_{\rho_k}(\psi) = I_{\rho^\ast}(\psi)
%$$
%which shows that indeed $\phi= \phi_{\rho^\ast}$. }
Since {$\mathcal{K}$} is lower-semicontinuous, cf. Remark~\ref{properties_of_K}, one finds
$$
{\mathcal{K}}(\phi_{\tilde\rho}) \leq \liminf_{k} {\mathcal{K}}(\phi_{\rho_k})=\inf_{\rho\in P(\partial\Omega)} {\mathcal{K}}(\phi_\rho),
$$
and so we get \eqref{eqrt}. Finally, recalling that $\mathcal{E}$ and {$\mathcal{K}$} coincide on solutions of \eqref{eq:BI},  we observe that
\[
\inf_{\rho\in P(\partial\Omega)} \mathcal{E}(\phi_\rho)
=
\mathcal{E}(\phi_{\rho^\ast}) 
=
{\mathcal{K}}(\phi_{\rho^\ast}) 
\ge
{\mathcal{K}}(\phi_{\tilde\rho}) 
=
\mathcal{E}(\phi_{\tilde\rho}) 
\ge
\mathcal{E}(\phi_{\rho^\ast}) 
\]
 and hence equality holds. Therefore we conclude by Proposition~\ref{pr:unici}.
\end{proof}

\begin{remark} \label{properties_of_K}
The functional ${{\mathcal K}}: \X\to [0,\infty]$ is a proper, convex and lower semicontinuous functional with domain 
$$
\dom({{\mathcal K}})= \{\phi \in \X: {{\mathcal K}}(\phi)<\infty\}.
$$
Moreover, {${\mathcal K}$} is continuous on $\dom({{\mathcal K}})^\circ$. Lower-semitcontinuity follows from Fatou's lemma. The continuity on $\dom({{\mathcal K}})^\circ$ can be found in \cite[Corollary~2.5]{EkTe}. 
\end{remark}

\section{The approximated problem} \label{sec:approx}

Using \eqref{defi_alpha_h} and \eqref{serie}, we can approximate, at least formally, the equation \eqref{eq:BI} by
\begin{equation}
\label{eq:n}
\left\{\begin{array}{rcll}
-\displaystyle\sum_{h=1}^{n} \alpha_{h} \Delta_{2h} \phi&=&\rho
& 
\hbox{in } \R^N,
\\[6mm]
\displaystyle\lim_{|x|\to \infty}\phi(x)&=& 0
\end{array}
\right.
\end{equation}
where for $h\in \N$ the operator $\Delta_{2h}(\cdot) = \dv(|\nabla\cdot|^{2h-2}\nabla\cdot)$ is the $2h$-Laplacian. Weak solutions of \eqref{eq:n} can be found as critical points of the Lagrangian functional
\[
\ci^n_{\rho}(\phi)= \sum_{h=1}^{n} \frac{\alpha_{h}}{2h} \|\nabla\phi\|_{2h}^{2h} - \langle\rho,\phi\rangle_n
\]
on the space $\X_{2n}$, which is defined as the completion of $C_c^\infty(\R^N)$ with respect to the norm $\|\nabla\cdot~\!\!\|_2+~\!\!\|\nabla\cdot\|_{2n}$. A similar construction can be found in \cite{K}. Naturally, we assume that $\rho\in \X_{2n}^*$. The symbol $\langle \cdot,\cdot\rangle_n$ denotes the duality bracket between $\mathcal{X}_{2n}$ and $\mathcal{X}_{2n}^\ast$. Because of the continuity, convexity and coercivity of the functional $\mathcal{I}_\rho^n$ there exists a unique minimizer $\phi_\rho^n$  on $\mathcal{X}_{2n}$ which is also a weak solution of \eqref{eq:n} and $\ci^n_\rho(\phi_\rho^n)\leq 0$. Observe that for all $m\le n$ we have that $\X\subset \X_{2n}\subset \X_{2m}$.  
\medskip

As in the Maxwell and Born-Infeld case,  cf. Section~\ref{equi_measures}, we define the electrostatic energy as $\mathcal{E}^n(\phi):= -\mathcal{I}_\rho^n(\phi)$. For weak solutions $\phi_\rho^n$ { of \eqref{eq:n}} the energy { coincides with the expression}
%Since the Legendre transform of $\sum_{h=1}^{n} \frac{\alpha_{h}}{2h} x^{2h}$ is $\sum_{h=1}^{n} \frac{2h-1}{2h}\alpha_{h} x^{2h}$ we have that the energy is
\[
\mathcal{E}^{n}(\phi_\rho^n)=-\ci_\rho^n(\phi_\rho^n)= {{\mathcal K}^n(\phi_\rho^n) \quad\mbox{where}\quad {\mathcal K}^n(\phi):=}
\sum_{h=1}^{n} \frac{2h-1}{2h}\alpha_{h} \|\n \phi\|_{2h}^{2h}.
\]
For this reason,  in the discussion of equilibrium measures we will consider only the following minimization problem
\begin{equation*}
%\label{minpn}
\min_{\rho\in P(\partial\O)} {\mathcal{K}}^{n}(\phi_\rho^n).
\end{equation*}

The next result shows that, for $n$ large,  the functional $\ci_\rho^n$ is well defined in $\X_{2n}$ for all $\rho\in P(\partial\O)$ and so, the existence of the unique minimizer $\phi_\rho^n$ is guaranteed.

\begin{lemma}
\label{lem41}
If $n>N/2$ then $\X_{2n}\subset C_b(\R^N)$ and hence $P(\partial\Omega)\subset \mathcal{X}_{2n}^\ast$.  Moreover, $\X_{2n}$ embeds compactly into $C_b(D)$ for every bounded set $D\subset \R^N$.
\end{lemma}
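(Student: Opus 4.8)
The plan is to show that for $n>N/2$ every element of $\X_{2n}$ is automatically a H\"older continuous function which vanishes at infinity, and then to read off the three assertions from this. The starting observation is that, for $\phi\in C_c^\infty(\RN)$, the $\X_{2n}$-norm controls $\|\n\phi\|_{2n}$ and $\|\n\phi\|_2$ separately. Since $n>N/2$ gives $2n>N$, Morrey's inequality yields
\[
|\phi(x)-\phi(y)|\le C\,|x-y|^{\gamma}\,\|\n\phi\|_{2n},\qquad \gamma:=1-\frac{N}{2n}\in(0,1),
\]
for all $x,y\in\RN$; crucially, only $\n\phi$ enters here, so no a priori $L^{2n}$-control of $\phi$ itself is needed. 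Simultaneously, the Sobolev inequality gives $\|\phi\|_{2^*}\le S\,\|\n\phi\|_2$ with $2^*=2N/(N-2)$.

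The key step is to upgrade these two pieces of information to an $L^\infty$-bound by a ``bump'' argument. For a function $u$ with finite H\"older seminorm $[u]_\gamma$ and finite $L^{2^*}$-norm, H\"older continuity forces $|u|\ge |u(x_0)|/2$ on the ball of radius $r=\bigl(|u(x_0)|/(2[u]_\gamma)\bigr)^{1/\gamma}$ centred at any point $x_0$; integrating $|u|^{2^*}$ over this ball and using $u\in L^{2^*}$ gives
\[
|u(x_0)|^{2^*+N/\gamma}\le C\,[u]_\gamma^{N/\gamma}\,\|u\|_{2^*}^{2^*}.
\]
Taking the supremum over $x_0$ bounds $\|u\|_\infty$, while running the same estimate over a sequence of disjoint balls escaping to infinity shows $u(x)\to 0$ as $|x|\to\infty$. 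Applying this with $u=\phi$ and the weighted AM--GM inequality produces, for some $\theta\in(0,1)$,
\[
\|\phi\|_\infty\le C\,\|\n\phi\|_2^{\theta}\,\|\n\phi\|_{2n}^{1-\theta}\le C\bigl(\|\n\phi\|_2+\|\n\phi\|_{2n}\bigr),
\]
so the inclusion $C_c^\infty(\RN)\hookrightarrow C_b(\RN)$ is continuous for the $\X_{2n}$-norm and in fact maps into H\"older functions vanishing at infinity with uniformly controlled seminorm.

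Passing to the completion then yields the first claim: every Cauchy sequence in $\X_{2n}$ is uniformly Cauchy, hence converges to a bounded H\"older function vanishing at infinity, and this identifies $\X_{2n}$ with a subspace of $C_b(\RN)$ (consistency with the $D^{1,2}$-realization is automatic, since uniform and $D^{1,2}$-convergence share their a.e. limits). For $\rho\in P(\de\O)$ and $\phi\in\X_{2n}$ we then have $|\langle\rho,\phi\rangle|=\bigl|\oint_{\de\O}\phi\,d\rho\bigr|\le \|\phi\|_{L^\infty(\de\O)}\le C\,\|\phi\|_{\X_{2n}}$, so $\rho\in\X_{2n}^\ast$. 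Finally, the compactness assertion follows directly from the uniform H\"older estimate: a bounded sequence in $\X_{2n}$ is uniformly bounded and equicontinuous on the compact set $\overline D$, hence precompact in $C(\overline D)$ by the Arzel\`a--Ascoli theorem, and therefore precompact in $C_b(D)$.

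The main obstacle is the bump argument of the second paragraph: control of the H\"older seminorm alone determines $\phi$ only up to an additive constant, and one must pin this constant down and force decay at infinity. This is precisely where the $L^2$-part of the norm is indispensable, for through the Sobolev embedding it delivers $\phi\in L^{2^*}$, the integrability that closes the estimate.
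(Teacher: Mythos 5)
Your proof is correct, and it reaches the statement by a genuinely different route than the paper. The paper's proof (following \cite[Proposition~8]{FOP}) covers $\R^N$ by unit cubes $Q_k$ and, for $\varphi\in C_c^\infty(\R^N)$ and $x\in Q_k$, writes $\varphi(x)=\int_{Q_k}\varphi\,dy+\int_{Q_k}\int_0^1\nabla\varphi(tx+(1-t)y)\cdot(x-y)\,dt\,dy$; the mean value is controlled by $C\|\nabla\varphi\|_2$ via the Sobolev inequality, and the line-integral term, after a change of variables, by $\frac{2Nn}{2n-N}\|\nabla\varphi\|_{2n}$, giving the same additive bound $\|\varphi\|_\infty\le C(\|\nabla\varphi\|_2+\|\nabla\varphi\|_{2n})$ that you obtain, followed by the same density argument. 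For the compactness assertion the paper observes that the $L^\infty$-bound places $\X_{2n}$ in $W^{1,2n}(B)$ for every ball $B$ and invokes the compact embedding $W^{1,2n}(B)\hookrightarrow\hookrightarrow C(\overline{B})$ for $2n>N$, whereas you extract equicontinuity directly from the Morrey seminorm bound and apply Arzel\`a--Ascoli; this is essentially the same mechanism, just unpacked. Your central device --- combining the Morrey seminorm $[\phi]_\gamma\le C\|\nabla\phi\|_{2n}$ with the Sobolev bound $\|\phi\|_{2^*}\le S\|\nabla\phi\|_2$ through the bump argument --- is a true alternative to the cube averaging, and it buys strictly more: a global H\"older estimate, decay of $\phi$ at infinity, and a multiplicative interpolation inequality $\|\phi\|_\infty\le C\|\nabla\phi\|_{2n}^{a}\|\nabla\phi\|_2^{1-a}$ rather than only the additive bound. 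What the paper's computation buys, in turn, is an explicit constant $\frac{2Nn}{2n-N}$, which is precisely what is cited afterwards in Lemma~\ref{lem:bddn0} to assert that the constant $C_n$ stays bounded when $n$ is bounded away from $N/2$; if your argument were to replace the paper's, you should add the (true, but not automatic) remark that your constants --- the Morrey constant for $p=2n$ and the interpolation exponents, which depend on $\gamma=1-\frac{N}{2n}$ --- also remain uniformly bounded in that regime.
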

\begin{proof} 
%\sout{It is enough to show that $\mathcal{X}_{2n} \subset \cancel{C(\R^N)} { C_b(\R^N)}$, since this implies directly that $P(\partial\Omega)\subset \mathcal{X}_{2n}^\ast$.} 
If $\phi\in \X_{2n}$, then $|\nabla\phi|\in L^2(\R^N)$ and therefore $\phi\in L^{2^*}(\R^N)$. We will show that there exists $C>0$ such that
\[
\|\phi\|_\infty
\leq
C (\|\n \phi\|_2 + \|\n \phi\|_{2n}).
\]
We argue similarly as in \cite[Proposition 8]{FOP}. Let us consider a family of $N$-dimensional cubes $Q_k$ such that $|Q_k|=1$ and $\bigcup_k Q_k=\R^N$ and $\varphi\in C_c^\infty(\R^N)$.
%{\color{red}Let $x,y\in\R^N$, $t\in[0,1]$ and $g(t)=\varphi(tx+(1-t)y)$. We have}
For every $x,y\in\R^N$ and $t\in[0,1]$ we have
\[
\varphi(x) - \varphi(y)
%&{\color{red}=
%	g(1) - g(0)}\\
%&{\color{red}=
%	\int_0^1 g'(t) dt}\\
=\int_{0}^{1} \nabla \varphi (tx+(1-t)y) \cdot (x-y) dt,
\]
and averaging with respect to $y$ on an arbitrary $Q_k$,
\[
\varphi(x) - \int_{Q_k} \varphi(y) dy = \int_{Q_k} \left(\int_{0}^{1} \nabla \varphi (tx+(1-t)y) \cdot (x-y) dt\right) dy.
\]
Then, if $x\in Q_k$,
%{\color{red}\begin{align*}
%	|\varphi(x)|
%	&\leq
%	\left|\int_{Q_k} \varphi(y) dy\right|
%	+ \left|\int_{Q_k} \left(\int_{0}^{1} \nabla \varphi (tx+(1-t)y) \cdot (x-y) dt\right) dy \right|\\
%	&\leq
%	\|\varphi\|_{L^{2^*}(Q_k)}
%	+ \int_{Q_k} \left(\int_{0}^{1} |\nabla \varphi (tx+(1-t)y) \cdot (x-y)| dt\right) dy \\
%	&\leq
%	\|\varphi\|_{L^{2^*}(\R^N)}
%	+ \sum_{i=1}^{N}\int_{Q_k} \left(\int_{0}^{1} |\partial_i \varphi (tx+(1-t)y)| \cdot |x_i-y_i| dt\right) dy \\
%	&\leq
%	C \|\nabla \varphi \|_{L^{2}(\R^N)}
%	+ \sum_{i=1}^{N}\int_{Q_k} \left(\int_{0}^{1} |\partial_i \varphi (tx+(1-t)y)|  dt\right) dy \\
%	&=
%	C \|\nabla \varphi \|_{L^{2}(\R^N)}
%	+ \sum_{i=1}^{N} \int_{0}^{1} \left(\int_{Q_k} |\partial_i \varphi (tx+(1-t)y)|  dy\right) dt \\
%	&=
%	C \|\nabla \varphi \|_{L^{2}(\R^N)}
%	+ \sum_{i=1}^{N} \int_{0}^{1} \left(\frac{1}{(1-t)^N}\int_{(1-t)Q_k+tx} |\partial_i \varphi (y)|  dy\right) dt \\
%	&\leq
%	C \|\nabla \varphi \|_{L^{2}(\R^N)}
%	+ \sum_{i=1}^{N} \int_{0}^{1} \left[\frac{1}{(1-t)^N}\left(\int_{(1-t)Q_k+tx} |\partial_i \varphi (y)|^{2n_0}  dy\right)^{\frac{1}{2n_0}}(1-t)^{\frac{N(2n_0-1)}{2n_0}}\right] dt \\
%	&\leq
%	C \|\nabla \varphi \|_{L^{2}(\R^N)}
%	+ N \int_{0}^{1} \left[(1-t)^{-\frac{N}{2n_0}}\|\nabla \varphi \|_{L^{2n_0}((1-t)Q_k+tx)}\right] dt \\
%	& \leq
%	C \|\nabla \varphi \|_{L^{2}(\R^N)}
%	+ \frac{2Nn_0}{2n_0 - N} \|\nabla \varphi \|_{L^{2n_0}(\R^N)}.
%	\end{align*}}
	\begin{align*}
	|\varphi(x)|
	&\leq
	\left|\int_{Q_k} \varphi(y) dy\right|
	+\int_{Q_k} \left(\int_{0}^{1} |\nabla \varphi (tx+(1-t)y) \cdot (x-y)| dt\right) dy \\
	&\leq
	\|\varphi\|_{L^{2^*}(Q_k)}
	+ \sum_{i=1}^{N}\int_{Q_k} \left(\int_{0}^{1} |\partial_i \varphi (tx+(1-t)y)|  dt\right) dy \\
	&\leq
	C \|\nabla \varphi \|_2
	+ \sum_{i=1}^{N} \int_{0}^{1} \left[\frac{1}{(1-t)^N}\left(\int_{(1-t)Q_k+tx} |\partial_i \varphi (y)|^{2n}  dy\right)^{\frac{1}{2n}}(1-t)^{\frac{N(2n-1)}{2n}}\right] dt \\
	&\leq
	C \|\nabla \varphi \|_2
	+ N \int_{0}^{1} \left[(1-t)^{-\frac{N}{2n}}\|\nabla \varphi \|_{L^{2n}((1-t)Q_k+tx)}\right] dt \\
	& \leq
	C \|\nabla \varphi \|_2
	+ \frac{2Nn}{2n - N} \|\nabla \varphi \|_{2n}.
	\end{align*}
%\begin{align*}
%\|\phi_\rho\|_\infty
%&\leq
%C_{n_0} (\|\n \phi_\rho^n\|_2 + \|\n \phi_\rho^n\|_{2n_0})\\
%&\leq
%C_{n_0} (\|\n \phi_\rho^n\|_2 + \|\n \phi_\rho^n\|_{2n}^\alpha \|\n \phi_\rho^n\|_{2}^{1-\alpha})
%\end{align*}
The conclusion $\X_{2n}\subset C_b(\R^N)$ follows now from a density argument. Since the $L^\infty$-estimate implies that $\X_{2n}\subset W^{1,2n}(B)$ for every ball $B\subset \R^N$ and since we have $2n>N$ we get the compact embedding of $\X_{2n}$ into $C_b(B)$.
%Then, by density we can approximate $\phi \in \mathcal{X}_{2n}$ by $\varphi \in C_c^\infty(\R^n)$ and we get  that $\phi\in L^\infty(\R^N)$ with the desired estimate of $\|\phi\|_\infty$. 
%As a consequence, any $\phi\in \X_{2n}$ is such that $\phi\in L^p(\R^N)$ for all $p\geq 2^*$. Moreover $|\nabla\phi|\in L^{2n}(\R^N)$. Because $2n\geq 2^*$, we deduce that $\phi\in W^{1,2n}(\R^N)$. Finally, since $2n>N$, $\phi$ is continuous and then we conclude.
\end{proof}

\begin{lemma}\label{lem:bddn0}
Let $n>N/2$. Then there exists a constant $C_n>0$ such that for every $\rho\in P(\partial\Omega)$
$$
\|\n \phi_\rho^n\|_2 + \|\n \phi_\rho^n\|_{2n}
\leq C_{n}. 
$$
The constant $C_n$ is uniformly bounded if $n$ is bounded away from $N/2$.
\end{lemma}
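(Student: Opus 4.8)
The plan is to follow the blueprint of Lemma~\ref{estimate_for_minimizers}, replacing the Born--Infeld functional by $\mathcal{I}_\rho^n$ and the $L^\infty$-bound of Lemma~\ref{L_infinity_estimate} by the one of Lemma~\ref{lem41}, and then to track every constant in its dependence on $n$. First I would exploit minimality: since $\mathcal{I}_\rho^n(\phi_\rho^n)\le \mathcal{I}_\rho^n(0)=0$ and each summand $\frac{\alpha_h}{2h}\|\nabla\phi_\rho^n\|_{2h}^{2h}$ is nonnegative (as $\alpha_h>0$), one gets
\[
\sum_{h=1}^n \frac{\alpha_h}{2h}\|\nabla\phi_\rho^n\|_{2h}^{2h}\le \langle\rho,\phi_\rho^n\rangle_n\le \|\phi_\rho^n\|_\infty,
\]
the last step using that $\rho\in P(\partial\Omega)$ is a probability measure on $\partial\Omega$. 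Discarding all but the terms $h=1$ (recall $\alpha_1=1$) and $h=n$ on the left and abbreviating $a:=\|\nabla\phi_\rho^n\|_2$, $b:=\|\nabla\phi_\rho^n\|_{2n}$, this reduces to the scalar inequality $\tfrac12 a^2+\tfrac{\alpha_n}{2n}b^{2n}\le \|\phi_\rho^n\|_\infty$.

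Next I would insert the estimate of Lemma~\ref{lem41}, namely $\|\phi_\rho^n\|_\infty\le C_1 a+C_2(n)\,b$, where $C_1$ is the Sobolev constant on the unit cube (independent of $n$) and $C_2(n)=\frac{2Nn}{2n-N}$. A key elementary observation is that $n\mapsto C_2(n)$ is strictly decreasing with $\lim_{n\to\infty}C_2(n)=N$, so $C_2(n)$ is bounded by a constant depending only on how far $n$ stays from $N/2$. I would then absorb the (sub-critical) right-hand side into the left by Young's inequality: $C_1 a\le \tfrac14 a^2+C_1^2$, and, with conjugate exponents $2n$ and $\tfrac{2n}{2n-1}$ and the choice $\varepsilon=\tfrac{\alpha_n}{4n}$, $C_2(n)\,b\le \tfrac{\alpha_n}{4n}b^{2n}+c_n$ with $c_n=\tfrac{2n-1}{2n}\,C_2(n)^{2n/(2n-1)}(2/\alpha_n)^{1/(2n-1)}$. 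This leaves $\tfrac14 a^2+\tfrac{\alpha_n}{4n}b^{2n}\le C_1^2+c_n$, whence $a\le 2\sqrt{C_1^2+c_n}$ and $b\le\big(\tfrac{4n}{\alpha_n}(C_1^2+c_n)\big)^{1/(2n)}$, which for each fixed $n$ already yields the desired bound $C_n$.

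The whole content of the second assertion is that these bounds stay bounded as $n\to\infty$, and this uniform-in-$n$ bookkeeping is the main obstacle. The only dangerous factors are $c_n$ and the root $\big(\tfrac{4n}{\alpha_n}\big)^{1/(2n)}$, both governed by the decay rate of the penalization coefficient $\alpha_n$. I would control them via a polynomial lower bound on $\alpha_n$: writing $\alpha_h=4^{-(h-1)}\binom{2h-2}{h-1}$ and using $\binom{2m}{m}\ge \frac{4^m}{2m+1}$ gives $\alpha_n\ge \frac{1}{2n-1}$, so $\frac{4n}{\alpha_n}\le 4n(2n-1)$ grows only polynomially. Since also $0<\alpha_n\le 1$, this forces $(2/\alpha_n)^{1/(2n-1)}\to 1$ and $\big(\tfrac{4n}{\alpha_n}\big)^{1/(2n)}\to 1$, while $C_2(n)^{2n/(2n-1)}\to N$; hence $c_n\to N$ remains bounded, $a$ remains bounded, and $b\to 1$, giving the claimed uniform bound. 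The essential point to emphasize is precisely that $\alpha_n$ does \emph{not} decay faster than polynomially, so that the $2n$-th root annihilates the (polynomially large) Young constant in the limit $n\to\infty$.
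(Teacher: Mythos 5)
Your proof is correct and follows essentially the paper's own argument: minimality ($\mathcal{I}^n_\rho(\phi^n_\rho)\le\mathcal{I}^n_\rho(0)=0$) combined with the $L^\infty$ estimate from the proof of Lemma~\ref{lem41}, followed by absorbing the linear right-hand side into the quadratic and $2n$-th power terms on the left. The only difference is that the paper compresses this absorption and the uniform-in-$n$ bookkeeping into ``and we conclude,'' whereas you carry it out explicitly via Young's inequality; in particular, your lower bound $\alpha_n\ge\frac{1}{2n-1}$ is precisely the implicit fact needed so that the resulting bound on $\|\nabla\phi^n_\rho\|_{2n}$ does not degenerate as $n\to\infty$.
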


\begin{proof}
The proof of Lemma \ref{lem41} shows that there exists $C_{n}>0$ with
\[
\|\phi_\rho^n\|_\infty
\leq
C_{n} (\|\n \phi_\rho^n\|_2 + \|\n \phi_\rho^n\|_{2n}),
\]
and that $C_n$ is uniformly bounded if $n$ is bounded away from $N/2$. Then
\[
0
\geq
\ci_\rho^n(\phi_\rho^n)
\geq
\frac{\alpha_{n}}{2n} \|\n \phi_\rho^n \|_{2n}^{2n}
+\frac{\alpha_{1}}{2} \|\n \phi_\rho^n \|_{2}^{2}
- C_{n} (\|\n \phi_\rho^n\|_2 + \|\n \phi_\rho^n\|_{2n})
\]
and we conclude.
\end{proof}

Next we give the counterpart of Lemma~\ref{continuous_dependance}. The proof, based on Clarkson's inequalities, is almost the same as the one of Lemma~\ref{continuous_dependance} with Lemma~\ref{lem:bddn0} replacing Lemma~\ref{estimate_for_minimizers}. We omit  the details .

\begin{lemma}[Continuous dependence of $\phi^n_\rho$ on $\rho$] 
\label{continuous_dependence_m}
Let $n>N/2$. If $\rho_k, \rho \in P(\partial \Omega)$ with $\rho_k  \rightharpoonup\rho$ as $k\to \infty$ weakly in the sense of measures then $\phi_{\rho_k}^n\to \phi_\rho^n$ strongly in $\X_{2n}$ and locally uniformly on $\R^N$, as $k\to\infty$.
\end{lemma}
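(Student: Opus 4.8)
The plan is to transcribe the proof of Lemma~\ref{continuous_dependance} line by line, replacing the a~priori bound of Lemma~\ref{estimate_for_minimizers} by that of Lemma~\ref{lem:bddn0}, the compactness of Lemma~\ref{lemma21} by the compact embedding $\X_{2n}\hookrightarrow C_b(D)$ of Lemma~\ref{lem41}, and the convex functional $\cj$ by its finite-sum analogue. Throughout I would write $\cj^n(\phi):=\sum_{h=1}^n \frac{\alpha_h}{2h}\|\nabla\phi\|_{2h}^{2h}$, so that $\ci^n_\rho=\cj^n-\langle\rho,\cdot\rangle_n$. Each summand $\phi\mapsto\|\nabla\phi\|_{2h}^{2h}$ is convex and continuous on $\X_{2n}$, hence $\cj^n$ is convex, continuous, and in particular weakly lower semicontinuous on $\X_{2n}$.

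First I would extract a limit. By Lemma~\ref{lem:bddn0} the sequence $\{\phi^n_{\rho_k}\}_{k\in\N}$ is bounded in $\X_{2n}$, so along a subsequence it converges weakly in $\X_{2n}$ to some $\phi\in\X_{2n}$, and by the compact embedding of Lemma~\ref{lem41} this convergence is locally uniform on $\R^N$. Since $\partial\Omega$ is compact and $\rho_k\rightharpoonup\rho$, the two pairing estimates used in Lemma~\ref{continuous_dependance} carry over verbatim: $|\langle\rho_k-\rho,\phi^n_{\rho_k}\rangle_n|\to 0$ and $\langle\rho_k,\phi^n_{\rho_k}\rangle_n\to\langle\rho,\phi\rangle_n$, because $\phi^n_{\rho_k}\to\phi$ uniformly on $\partial\Omega$ and $\phi$ is continuous. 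Combining the weak lower semicontinuity of $\cj^n$ with the minimality of $\phi^n_{\rho_k}$ for $\ci^n_{\rho_k}$ yields, exactly as before, the chain
\[
\ci^n_\rho(\phi)\le\liminf_k\ci^n_{\rho_k}(\phi^n_{\rho_k})\le\limsup_k\ci^n_{\rho_k}(\phi^n_{\rho_k})\le\limsup_k\ci^n_{\rho_k}(\phi^n_\rho)=\ci^n_\rho(\phi^n_\rho).
\]
By uniqueness of the minimizer of $\ci^n_\rho$ this forces $\phi=\phi^n_\rho$ and $\ci^n_{\rho_k}(\phi^n_{\rho_k})\to\ci^n_\rho(\phi^n_\rho)$; since every subsequence admits a further subsequence with this same unique limit, the whole sequence converges. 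Feeding the convergence of the energies back into the pairing estimate gives $\cj^n(\phi^n_{\rho_k})\to\cj^n(\phi^n_\rho)$.

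For the strong convergence I would apply Clarkson's inequality in each $L^{2h}(\R^N)$ for $1\le h\le n$ (all exponents satisfy $2h\ge 2$) and sum against the weights $\alpha_h/(2h)$ to obtain
\[
\cj^n\!\Big(\tfrac{\phi^n_{\rho_k}-\phi^n_\rho}{2}\Big)+\cj^n\!\Big(\tfrac{\phi^n_{\rho_k}+\phi^n_\rho}{2}\Big)\le \tfrac12\big(\cj^n(\phi^n_{\rho_k})+\cj^n(\phi^n_\rho)\big).
\]
Weak lower semicontinuity gives $\cj^n(\phi^n_\rho)\le\liminf_k\cj^n\big((\phi^n_{\rho_k}+\phi^n_\rho)/2\big)$, and the squeeze argument of Lemma~\ref{continuous_dependance}, now using $\cj^n(\phi^n_{\rho_k})\to\cj^n(\phi^n_\rho)$, produces $\cj^n\big((\phi^n_{\rho_k}-\phi^n_\rho)/2\big)\to 0$.

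The one place that differs from Lemma~\ref{continuous_dependance}, and where I expect the only genuine (though modest) obstacle, is converting this into convergence in the full $\X_{2n}$-norm $\|\nabla\cdot\|_2+\|\nabla\cdot\|_{2n}$: there $\cj$ controlled a single norm via $\|\nabla\phi\|_2^2\le 2\cj(\phi)$, whereas here two norms must be recovered simultaneously. Since every summand of $\cj^n$ is nonnegative, the limit $\cj^n\big((\phi^n_{\rho_k}-\phi^n_\rho)/2\big)\to 0$ forces each term $\|\nabla(\phi^n_{\rho_k}-\phi^n_\rho)\|_{2h}^{2h}\to 0$ separately; in particular the $h=1$ term (with $\alpha_1=1$) controls $\|\nabla(\phi^n_{\rho_k}-\phi^n_\rho)\|_2$ and the $h=n$ term controls $\|\nabla(\phi^n_{\rho_k}-\phi^n_\rho)\|_{2n}$. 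Together these give convergence in both norms defining $\X_{2n}$, which is exactly strong convergence there; the locally uniform convergence has already been recorded in the extraction step, completing the argument.
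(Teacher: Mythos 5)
Your proposal is correct and is essentially the paper's intended argument: the paper explicitly omits the proof, stating that it is "almost the same" as that of Lemma~\ref{continuous_dependance} with Lemma~\ref{lem:bddn0} replacing Lemma~\ref{estimate_for_minimizers}, which is exactly the transcription you carry out (including the Clarkson inequality step, now applied to the finite sum $\cj^n$). Your closing observation — that positivity of every summand lets the $h=1$ and $h=n$ terms recover both norms $\|\nabla\cdot\|_2$ and $\|\nabla\cdot\|_{2n}$ defining $\X_{2n}$ — is precisely the small extra point needed to upgrade the original argument's $D^{1,2}$-convergence to strong $\X_{2n}$-convergence, and it is handled correctly.
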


We now establish existence and uniqueness of the equilibrium measure $\rho^{\ast,n}$ for {$\mathcal{K}^n$} and show that $\phi_{\rho^{\ast,n}}^n$ is constant on $\overline{\Omega}$.   First, we assume as before only boundedness of $\Omega$ and no further regularity. 
We need to adapt the proof of Lemma~\ref{not_empty}. Since $\phi_\rho^n$ is a weak solution of \eqref{eq:n} on $\R^N$ we get that $\phi_\rho^n$ weakly solves 
$$
-\sum_{h=1}^{n} \alpha_{h} \Delta_{2h} \phi_\rho^n = 0 \quad\mbox{ on } B
$$
where $B$ is an open ball with $\overline{B}\cap \supp\rho=\emptyset$. By \cite[Lemma~1]{lieberman} we obtain that $\phi_\rho^n$ is $C^1$ on 
$\R^N\setminus\supp\rho$ and hence it satisfies \eqref{elliptic} on $B$ with $a(x) = \sum_{h=1}^n \alpha_h |\nabla \phi_\rho^n(x)|^{2h-2}$ being a 
continuous function which is locally bounded on $\R^N\setminus\supp\rho$  and bounded away from zero. Therefore, \cite[Theorem~8.19]{GT} applies and we can finish the proof as in Lemma~\ref{locate_max}. 
The remaining results of Section~\ref{se:pre} stay valid for $\phi_\rho^n$ without any change. Likewise, the results of Section~\ref{se:equi} from 
Theorem~\ref{th:exis} up to and including Lemma~\ref{lambda>0} stay true without change. This all works under the only assumption that $\Omega$ is bounded. In particular, we have that the equilibrium potential is always unique.

\medskip

It remains to establish uniqueness of the equilibrium measure, and here we need (as before) more regularity of $\partial\Omega$. We do not need any analogue for Theorem~\ref{main2} since we already have that $\phi^n_{\rho^{\ast,n}}$ is a weak solution of \eqref{eq:n} with right-hand side $\rho^{\ast,n}$. Furthermore $\phi^n_{\rho^{\ast,n}}$ weakly solves
\begin{equation}\label{eq:phil_n}
\left\{
\begin{array}{rcll}
-\sum_{h=1}^{n} \alpha_{h} \Delta_{2h}\phi&=& 0 & \hbox{in }\mathbb{R}^N\setminus \overline{\Omega},
\\[6mm]
\phi&=&\l & \hbox{in }\overline{\Omega},
\\[2mm]
\displaystyle\lim_{|x|\to \infty}\phi(x)&=& 0.
\end{array}
\right.
\end{equation}
with $\lambda=\lambda^{\ast,n}=\langle \rho^{\ast,n}, \phi^n_{\rho^{\ast,n}}\rangle$. The existence of a weak solution $\phi_\lambda$ of \eqref{eq:phil_n} for any $\lambda>0$ follows from minimization of a suitable functional in a space similar to ${\mathcal X}_{2n}$. We leave these details to the reader. The $C^{1,\alpha}$-regularity of this $\phi_\lambda$ on $\overline{B}_R(0)\setminus\Omega$ for every $R>0$ follows by assuming $\partial\Omega\in C^{1,\alpha}$ and combining Theorem~1 and Lemma~1 from \cite{lieberman}. This leads to the definition of the positive and bounded measure
\begin{equation} \label{equi_n_measure}
d\rho_\lambda^n := -\left(\sum_{h=1}^n \alpha_h |\partial_\nu \phi_\lambda^n|^{2h-2}\right)\partial_\nu \phi_\lambda^n \,d\sigma
\end{equation}
as in Proposition~\ref{pr:lrho}. Note that due to the uniform ellipticity of \eqref{eq:phil_n} in a neighbourhood of the boundary, the Hopf Lemma shows that $d\rho_\lambda^n$ is strictly positive on the boundary of the unbounded connected component of $\R^N\setminus\overline{\Omega}$. The characterization of the value $\lambda^{\ast,n}$ as the unique value such that $d\rho_\lambda^n$ is a probability measure on $\partial\Omega$ is then established in the same way as in Proposition~\ref{pr:rhol} with the direct consequence of uniqueness of the equilibrium measure $\rho^{\ast,n}$. Notice that this works under the assumption $\partial\Omega\in C^{1,\alpha}$. The following statement summarizes these results.

\begin{theorem}\label{th:exis-n} 
Let $n>N/2$ and assume $\partial\Omega\in C^{1,\alpha}$. There exists a unique equilibrium measure $\rho^{\ast,n}\in P(\partial\Omega)$ for {$\mathcal{K}^n$}. Moreover, $\phi_{\rho^{\ast,n}}^n=\lambda^{\ast,n}$ on $\bar{\Omega}$ and for every $\mu \in P(\partial\Omega)$
\begin{equation}
\label{ineq_equilibrium_approx}
\lambda^{\ast,n} := \langle \rho^{\ast,n},\phi^n_{\rho^{\ast,n}} \rangle = \langle  \mu,\phi^n_{\rho^{\ast,n}} \rangle.
\end{equation}
\end{theorem}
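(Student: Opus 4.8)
The plan is to run, mutatis mutandis, the same program carried out in Sections~\ref{se:pre} and~\ref{se:equi} for the Born--Infeld energy, now for the functional $\mathcal{K}^n$ and the approximated equation \eqref{eq:n}, using Lemmas~\ref{lem41}, \ref{lem:bddn0} and~\ref{continuous_dependence_m} in place of their Born--Infeld counterparts. Concretely, I would establish in turn: (a) existence of $\rho^{\ast,n}$; (b) constancy of $\phi^n_{\rho^{\ast,n}}$ on $\overline{\Omega}$ together with the identity \eqref{ineq_equilibrium_approx}; and (c) uniqueness under $\partial\Omega\in C^{1,\alpha}$.

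For (a) I would mirror Theorem~\ref{th:exis}: pick a minimizing sequence $\{\rho_k\}$ for $\mathcal{K}^n$, use the compactness of $\partial\Omega$ to extract (by tightness) a weak limit $\rho_k\rightharpoonup\rho^{\ast,n}\in P(\partial\Omega)$, and invoke the continuous dependence of Lemma~\ref{continuous_dependence_m} so that $\phi^n_{\rho_k}\to\phi^n_{\rho^{\ast,n}}$ strongly in $\X_{2n}$. By interpolation, strong convergence in $\X_{2n}$ yields $\|\nabla\phi^n_{\rho_k}\|_{2h}\to\|\nabla\phi^n_{\rho^{\ast,n}}\|_{2h}$ for every $h\le n$, hence $\mathcal{K}^n(\phi^n_{\rho_k})\to\mathcal{K}^n(\phi^n_{\rho^{\ast,n}})$ and $\rho^{\ast,n}$ realizes the infimum. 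For (b) I would reproduce Proposition~\ref{cacca}, Lemma~\ref{ae_wrt_measure} and Theorem~\ref{main}. The only point requiring adaptation is the location-of-maxima argument of Lemma~\ref{locate_max}: off $\supp\rho$ the weak solution $\phi^n_\rho$ of \eqref{eq:n} is $C^1$ by \cite[Lemma~1]{lieberman} and satisfies \eqref{elliptic} with $a(x)=\sum_{h=1}^n\alpha_h|\nabla\phi^n_\rho(x)|^{2h-2}$, which is continuous, locally bounded and bounded away from zero thanks to the linear term $\alpha_1=1$; hence the strong maximum principle \cite[Theorem~8.19]{GT} applies verbatim. This yields $\phi^n_{\rho^{\ast,n}}\equiv\lambda^{\ast,n}$ on $\overline{\Omega}$ with $\lambda^{\ast,n}=\langle\rho^{\ast,n},\phi^n_{\rho^{\ast,n}}\rangle$ and the inequality $\lambda^{\ast,n}\le\langle\mu,\phi^n_{\rho^{\ast,n}}\rangle$ for all $\mu$. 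Since $\phi^n_{\rho^{\ast,n}}$ is the \emph{constant} $\lambda^{\ast,n}$ on $\overline{\Omega}\supset\partial\Omega=\supp\mu$, the bracket $\langle\mu,\phi^n_{\rho^{\ast,n}}\rangle$ equals $\lambda^{\ast,n}$ for every $\mu\in P(\partial\Omega)$, which is exactly \eqref{ineq_equilibrium_approx} (the argument of Corollary~\ref{cor:unique_potential}).

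For the uniqueness in (c) I would argue as in Propositions~\ref{pr:lrho} and~\ref{pr:rhol}, with a decisive simplification: unlike the Born--Infeld case, $\phi^n_{\rho^{\ast,n}}$ is automatically a weak solution of \eqref{eq:n}, so no analogue of Theorem~\ref{main2} is needed and $\phi^n_{\rho^{\ast,n}}$ already solves the exterior problem \eqref{eq:phil_n} with $\lambda=\lambda^{\ast,n}$. For each $\lambda>0$ I would solve \eqref{eq:phil_n} to obtain $\phi^n_\lambda$, upgrade it to $C^{1,\alpha}(\overline{B}_R(0)\setminus\Omega)$ using $\partial\Omega\in C^{1,\alpha}$ and \cite[Theorem~1, Lemma~1]{lieberman} (so that $\partial_\nu\phi^n_\lambda$ is well defined and, via Hopf, strictly negative on $\partial Z_0$), and define $\rho^n_\lambda$ by \eqref{equi_n_measure}. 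It then suffices to show that $\Upsilon^n(\lambda):=\int_{\partial\Omega}d\rho^n_\lambda$ is strictly increasing: comparing $\tilde\phi_1:=\phi^n_{\lambda_1}+(\lambda_2-\lambda_1)$ with $\phi^n_{\lambda_2}$ for $\lambda_1<\lambda_2$, the difference $\psi:=\tilde\phi_1-\phi^n_{\lambda_2}$ solves a linear elliptic problem obtained by writing the flux difference $A(\nabla\tilde\phi_1)-A(\nabla\phi^n_{\lambda_2})$, with $A(\xi)=\sum_{h=1}^n\alpha_h|\xi|^{2h-2}\xi$, as $\tilde a(x)\nabla\psi$ where $\tilde a(x)=\int_0^1 DA(t\nabla\tilde\phi_1+(1-t)\nabla\phi^n_{\lambda_2})\,dt$. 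Since $\psi=0$ on $\partial\Omega$ and $\psi\to\lambda_2-\lambda_1>0$ at infinity, the maximum principle and Hopf give $\partial_\nu\psi>0$ on $\partial Z_0$, i.e. $\partial_\nu\phi^n_{\lambda_1}>\partial_\nu\phi^n_{\lambda_2}$, whence $\Upsilon^n(\lambda_1)<\Upsilon^n(\lambda_2)$. Thus $\lambda^{\ast,n}$ is the unique $\lambda$ with $\Upsilon^n(\lambda)=1$, and since every equilibrium potential coincides with $\phi^n_{\lambda^{\ast,n}}$ and solves \eqref{eq:n}, the equilibrium measure $\rho^{\ast,n}=\rho^n_{\lambda^{\ast,n}}$ is unique.

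The main obstacle is the ellipticity of the linearized comparison operator $\tilde a(x)$ used for the monotonicity of $\Upsilon^n$, because the higher $2h$-Laplacians degenerate where the gradient vanishes. The point to exploit is the structural splitting $DA(\xi)=\alpha_1 I+\sum_{h=2}^n\alpha_h\bigl(|\xi|^{2h-2}I+(2h-2)|\xi|^{2h-4}\,\xi\otimes\xi\bigr)$: the linear term contributes $\alpha_1 I=I$ and every higher term is positive semidefinite, so $\tilde a(x)\ge I$ gives a uniform lower ellipticity bound, while boundedness from above holds on bounded subsets of $\R^N\setminus\overline{\Omega}$ since $|\nabla\phi^n_\lambda|$ is bounded there. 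Hence $-\operatorname{div}(\tilde a(x)\nabla\psi)=0$ is genuinely (locally uniformly) elliptic and both the strong maximum principle and the Hopf Lemma are legitimate; this is precisely where the assumption $\partial\Omega\in C^{1,\alpha}$ is needed, to guarantee that the normal derivatives appearing in \eqref{equi_n_measure} and in the Hopf step are well defined.
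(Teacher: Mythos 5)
Your proposal is correct and follows essentially the same route as the paper: adapting Lemma~\ref{locate_max} via \cite[Lemma~1]{lieberman} and \cite[Theorem~8.19]{GT} so that the whole chain from Theorem~\ref{th:exis} through Lemma~\ref{lambda>0} carries over, then exploiting that $\phi^n_{\rho^{\ast,n}}$ is automatically a weak solution of \eqref{eq:n} (so no analogue of Theorem~\ref{main2} is needed) and repeating the Proposition~\ref{pr:lrho}--\ref{pr:rhol} scheme with the measure \eqref{equi_n_measure} and the monotonicity of $\lambda\mapsto\int_{\partial\Omega}d\rho^n_\lambda$. Your explicit lower bound $\tilde a(x)\ge \operatorname{Id}$ from the splitting of $DA(\xi)$ is a welcome detail that the paper only asserts implicitly via ``uniform ellipticity near the boundary,'' but it does not change the argument.
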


\begin{remark} \label{c2-alpha}
Based on the local $C^{1,\alpha}$ regularity of $\phi^n_{\rho^{\ast,n}}$ we can also improve the regularity.  In case $\partial\Omega\in C^{2,\alpha}$ we can interpret the differential equation $-\sum_{h=1}^n \alpha_h \Delta_h \phi^n_{\rho^{\ast,n}}=0$ on $\R^N\setminus\Omega$ as a linear differential equation for $\phi^n_{\rho^{\ast,n}}$ with locally $\alpha$-H\"older continuous and locally uniformly elliptic coefficients. This leads to local $C^{2,\alpha}$ regularity up to the boundary and in particular that $\phi^n_{\rho^{\ast,n}}$ is a classical solution on $\R^N\setminus\Omega$. 
\end{remark}

The remaining part of this section is devoted to show that the sequence $\rho^{\ast,n}$ weakly converges to a limit measure $\bar \rho$ (Lemma~\ref{convergence_by_approximation}) which is in fact an equilibrium measure for the Born-Infeld energy $\E$ (Proposition~\ref{prop:equilibrium-bar}).

\begin{lemma}\label{le:monotonicity}
Let $N/2<m<n$ and $\rho\in P(\partial\Omega)$. Then
\begin{equation*}%\label{Imincr}
\ci_\rho^m(\phi_\rho^m)< \ci_\rho^n(\phi_\rho^n)
\end{equation*}
and
\begin{equation*}%\label{Hmdecr}
{\mathcal{K}}^n(\phi_\rho^n)< {\mathcal{K}}^m(\phi_\rho^m).
\end{equation*}
\end{lemma}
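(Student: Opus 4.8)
The plan is to exploit the nested structure of the functionals $\ci_\rho^n$ in $n$ together with the minimality of each $\phi_\rho^n$, and then transfer the resulting inequality for $\ci_\rho$ into the claimed inequality for ${\mathcal K}^n$ via the identity ${\mathcal K}^n(\phi_\rho^n)=-\ci_\rho^n(\phi_\rho^n)$ that holds on weak solutions. First I would record the pointwise decomposition: for every $\phi\in\X_{2n}$,
$$
\ci_\rho^n(\phi) = \ci_\rho^m(\phi) + \sum_{h=m+1}^{n} \frac{\alpha_{h}}{2h}\|\nabla\phi\|_{2h}^{2h},
$$
where the duality brackets $\langle\rho,\phi\rangle_n$ and $\langle\rho,\phi\rangle_m$ agree because $\rho$ is a measure and $\phi$ is continuous, so both equal $\oint_{\partial\Omega}\phi\,d\rho$. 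Since $\X_{2n}\subset\X_{2m}$ for $m\le n$, the minimizer $\phi_\rho^n$ is an admissible competitor for $\ci_\rho^m$, so minimality of $\phi_\rho^m$ gives $\ci_\rho^m(\phi_\rho^m)\le\ci_\rho^m(\phi_\rho^n)$. Evaluating the decomposition at $\phi_\rho^n$ and using $\alpha_h>0$, I obtain the chain
$$
\ci_\rho^m(\phi_\rho^m) \le \ci_\rho^m(\phi_\rho^n) \le \ci_\rho^n(\phi_\rho^n).
$$

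To upgrade the second inequality to a strict one it suffices to show $\nabla\phi_\rho^n\not\equiv 0$, since then the added sum $\sum_{h=m+1}^{n}\frac{\alpha_h}{2h}\|\nabla\phi_\rho^n\|_{2h}^{2h}$ is strictly positive. For this I would test with a nontrivial competitor: choose $\psi\in C_c^\infty(\R^N)\subset\X_{2n}$ with $\psi\ge0$ and $\psi\equiv1$ on $\partial\Omega$, so that $\langle\rho,\psi\rangle_n=\oint_{\partial\Omega}\psi\,d\rho=1$. For small $t>0$ the expansion
$$
\ci_\rho^n(t\psi) = \sum_{h=1}^{n}\frac{\alpha_h}{2h}\,t^{2h}\|\nabla\psi\|_{2h}^{2h} - t
$$
is negative, because the linear term dominates the leading quadratic term as $t\to0^+$. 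Hence $\ci_\rho^n(\phi_\rho^n)\le\ci_\rho^n(t\psi)<0$, which forces $\phi_\rho^n\ne0$ and thus $\nabla\phi_\rho^n\not\equiv0$. This yields the strict inequality $\ci_\rho^m(\phi_\rho^m)<\ci_\rho^n(\phi_\rho^n)$, the first assertion. (This is simply the approximated counterpart of the strict negativity $\ci_\rho(\phi_\rho)<0$ from \cite[Proposition~2.3]{BDP}.)

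The second assertion is then immediate. Since $\phi_\rho^m$ and $\phi_\rho^n$ are weak solutions of \eqref{eq:n} for the respective exponents, the identity ${\mathcal K}^n(\phi_\rho^n)=-\ci_\rho^n(\phi_\rho^n)$ (and its analogue for $m$) gives
$$
{\mathcal K}^n(\phi_\rho^n) = -\ci_\rho^n(\phi_\rho^n) < -\ci_\rho^m(\phi_\rho^m) = {\mathcal K}^m(\phi_\rho^m).
$$
The argument is essentially free once the decomposition and the identity are in place; the only point deserving a little care is the strictness, i.e.\ verifying that the minimizer $\phi_\rho^n$ is nontrivial, which is exactly the short testing argument above.
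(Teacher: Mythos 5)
Your proof is correct and follows essentially the same route as the paper: minimality of $\phi_\rho^m$ over the larger set $\X_{2m}\supset\X_{2n}$ gives $\ci_\rho^m(\phi_\rho^m)\le\ci_\rho^m(\phi_\rho^n)<\ci_\rho^n(\phi_\rho^n)$, and the weak-solution identity ${\mathcal K}^n(\phi_\rho^n)=-\ci_\rho^n(\phi_\rho^n)$ transfers this to the second inequality. The only addition is your explicit verification of strictness by showing $\ci_\rho^n(\phi_\rho^n)<0$ (hence $\nabla\phi_\rho^n\not\equiv 0$) via the small-$t$ test function, a detail the paper leaves implicit as the approximated analogue of \cite[Proposition~2.3]{BDP}.
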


\begin{proof}The conclusion follows easily observing that
\[
\ci_\rho^m(\phi_\rho^m)\leq \ci_\rho^m(\phi_\rho^n)< \ci_\rho^n(\phi_\rho^n)
\]
and, since $\phi_\rho^n,\phi_\rho^m$ are weak solutions,
\[
{\mathcal{K}}^n(\phi_\rho^n)
= - \ci_\rho^n(\phi_\rho^n) 
< - \ci_\rho^m(\phi_\rho^m)
= {\mathcal{K}}^m(\phi_\rho^m).
\]
\end{proof}

For $n\in \N$, let us consider ${\mathfrak{k}}^n:=\min_{\rho\in P(\de \O)}{\mathcal{K}}^n(\phi_{  \rho}) ={\mathcal{K}}^n(\phi^n_{\rho^{\ast,n}})$ and $\ci^n := \min_{\psi\in \X_{2n}} \ci^n_{\rho^{\ast,n}}(\psi) = \ci^n_{\rho^{\ast,n}}(\phi^n_{\rho^{\ast,n}})$. As an immediate consequence of Lemma~\ref{le:monotonicity} and of the fact that $\ci^n = - {\mathfrak{k}}^n$, we have the following monotonicity result.
\begin{lemma}\label{le:hndec} The sequence ${\{\mathfrak{k}}^n\}_{n>N/2}$ is strictly decreasing and the sequence $\{\ci^n\}_{n>N/2}$ is strictly increasing.
\end{lemma}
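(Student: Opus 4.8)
The plan is to establish the strict monotonicity of $\{\mathfrak{k}^n\}_{n>N/2}$ first and then deduce the statement about $\{\ci^n\}_{n>N/2}$ for free. Indeed, as already recorded in the text, $\phi^n_{\rho^{\ast,n}}$ is a weak solution of \eqref{eq:n}, so that $\mathcal{K}^n(\phi^n_{\rho^{\ast,n}})=-\ci^n_{\rho^{\ast,n}}(\phi^n_{\rho^{\ast,n}})$ and hence $\ci^n=-\mathfrak{k}^n$. Therefore it suffices to show that $\mathfrak{k}^n<\mathfrak{k}^m$ whenever $N/2<m<n$; negating this inequality immediately yields $\ci^n>\ci^m$, i.e. the strict increase of $\{\ci^n\}_{n>N/2}$.

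First I would fix $N/2<m<n$. The only subtlety is that the two optimal measures $\rho^{\ast,m}$ and $\rho^{\ast,n}$ are in general distinct, so Lemma~\ref{le:monotonicity}, which compares $\mathcal{K}^n$ and $\mathcal{K}^m$ for one and the same measure, cannot be applied verbatim to the two minimizers. I would get around this by testing the $n$-th minimization problem with the $m$-th equilibrium measure (this is legitimate since $\rho^{\ast,m}\in P(\partial\Omega)\subset\mathcal{X}_{2n}^\ast$ by Lemma~\ref{lem41}). Since $\rho^{\ast,n}$ minimizes $\rho\mapsto\mathcal{K}^n(\phi^n_\rho)$ over $P(\partial\Omega)$, the definition of the minimum gives
\[
\mathfrak{k}^n=\mathcal{K}^n(\phi^n_{\rho^{\ast,n}})\le \mathcal{K}^n(\phi^n_{\rho^{\ast,m}}).
\]
Applying now Lemma~\ref{le:monotonicity} with the fixed measure $\rho=\rho^{\ast,m}$ produces the strict inequality
\[
\mathcal{K}^n(\phi^n_{\rho^{\ast,m}})<\mathcal{K}^m(\phi^m_{\rho^{\ast,m}})=\mathfrak{k}^m.
\]
Chaining these two relations yields $\mathfrak{k}^n<\mathfrak{k}^m$, as desired.

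I do not expect any genuine obstacle here: the whole argument is the standard principle that the minimum of a pointwise strictly decreasing family is strictly decreasing, with the strictness supplied entirely by Lemma~\ref{le:monotonicity} and the non-strict part supplied by the minimality of $\rho^{\ast,n}$. The only point requiring a moment of care is to feed a single common measure into both energies so that Lemma~\ref{le:monotonicity} becomes applicable, which is exactly what the choice $\rho=\rho^{\ast,m}$ achieves. Finally, using $\ci^n=-\mathfrak{k}^n$ once more, the strict decrease of $\{\mathfrak{k}^n\}_{n>N/2}$ is equivalent to the strict increase of $\{\ci^n\}_{n>N/2}$, which completes the plan.
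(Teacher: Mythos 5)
Your proposal is correct and follows exactly the route the paper intends: the paper states this lemma as an immediate consequence of Lemma~\ref{le:monotonicity} together with the identity $\ci^n=-\mathfrak{k}^n$, and your argument (testing the $n$-th minimization with the common measure $\rho^{\ast,m}$ to get $\mathfrak{k}^n\le \mathcal{K}^n(\phi^n_{\rho^{\ast,m}})<\mathcal{K}^m(\phi^m_{\rho^{\ast,m}})=\mathfrak{k}^m$, then negating) is precisely the filled-in version of that one-line justification. No gaps; the chaining through a single fixed measure is the right and only point of care.
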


%\begin{proof}
%This is a trivial statement for $h^n$. To see that $h<\infty$ take $\rho:= \delta_{x_0}$ for some point $x_0\in \partial\Omega$ and notice that $\phi_{\rho}=....$ (the potential of a point charge) and that $\mathcal{H}(\phi_{\rho})<\infty$. 
%\end{proof}

\begin{lemma}\label{convergence_by_approximation}
There exists $\bar \rho\in P(\de \O)$ such that $\rho^{\ast,n} \rightharpoonup \bar \rho$ weakly in the sense of measures, $\phi^n_{\rho^{\ast,n}} \rightharpoonup \phi_{\bar \rho}$ weakly in $\X_{2m}$ for all $m> N/2$ and uniformly on compact subsets of $\R^N$ as $n\to \infty$. Here $\phi_{\bar \rho}\in \mathcal{X}$ is the unique minimizer of $\ci_{\bar \rho}:\mathcal{X} \to \R$. Moreover 
\begin{equation}\label{barrho<}
{{\mathcal K}}^n(\phi_{\bar \rho})\le \lim_n {{\mathcal K}}^n (\pran).
\end{equation}
\end{lemma}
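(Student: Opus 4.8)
The plan is to argue by compactness together with a $\Gamma$-convergence type identification of the limit. Since $\partial\Omega$ is compact, the probability measures $\{\ran\}_{n>N/2}$ are tight, so up to a subsequence (not relabeled) $\ran\wto\br$ for some $\br\in P(\de\O)$ by Prokhorov's theorem \cite{prob}. To pass to the limit in the potentials I need bounds on $\pran$ that are uniform in $n$ in each fixed space $\X_{2m}$, and these come for free from the monotonicity of Lemma~\ref{le:hndec}: for fixed $m>N/2$ and every $n\ge m$, nonnegativity of the individual terms gives
\[
\tfrac{2m-1}{2m}\alpha_m\|\n\pran\|_{2m}^{2m}\le {\mathcal K}^n(\pran)={\mathfrak{k}}^n\le {\mathfrak{k}}^m,
\qquad \tfrac12\|\n\pran\|_2^2\le {\mathfrak{k}}^m .
\]
Hence $\{\pran\}_{n\ge m}$ is bounded in $\X_{2m}$; by reflexivity and a diagonal argument over $m$ I may assume $\pran\wto\psi$ weakly in $\X_{2m}$ for every $m>N/2$, and by the compact embedding of $\X_{2m}$ into $C_b(D)$ on bounded sets $D$ (Lemma~\ref{lem41}) the convergence is uniform on compact subsets of $\R^N$, identifying the weak and the uniform limit.

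The heart of the proof is to show $\psi\in\X$ and $\psi=\pbr$. Weak convergence in $\X_{2m}$ gives $\n\pran\wto\n\psi$ in $L^{2m}$, so by weak lower semicontinuity $\|\n\psi\|_{2m}\le\liminf_n\|\n\pran\|_{2m}\le\big(\tfrac{2m}{(2m-1)\alpha_m}{\mathfrak{k}}^m\big)^{1/(2m)}=:D_m$. The crucial analytic input is that $\tfrac1{2m}\log(1/\alpha_m)\to0$, equivalently $\alpha_m^{1/(2m)}\to1$, which follows from the explicit form of $\alpha_m$ in \eqref{defi_alpha_h} via Stirling's formula; since ${\mathfrak{k}}^m$ stays bounded this yields $D_m\to1$, and a Chebyshev estimate on the super-level sets $\{|\n\psi|>1+\eps\}$ then forces $\|\n\psi\|_\infty\le1$, so that $\psi\in\X$. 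To identify $\psi$ with the minimizer $\pbr$ of $\ci_{\br}$ I would run a two-sided estimate. Any competitor $w\in\X$ is admissible for every approximate problem, so $\ci^n_{\ran}(\pran)\le\ci^n_{\ran}(w)$; letting $n\to\infty$ the right-hand side converges to $\ci_{\br}(w)$, because the nonnegative sums $\sum_{h=1}^n\frac{\alpha_h}{2h}\|\n w\|_{2h}^{2h}$ increase to $\cj(w)$ (recall \eqref{serie}) and $\langle\ran,w\rangle_n\to\langle\br,w\rangle$ since $w$ is continuous and bounded. For the left-hand side, truncating the nonnegative nonlinear part at any fixed $M\le n$ and using weak lower semicontinuity of each $\phi\mapsto\|\n\phi\|_{2h}^{2h}$ gives $\liminf_n\sum_{h=1}^n\frac{\alpha_h}{2h}\|\n\pran\|_{2h}^{2h}\ge\sum_{h=1}^M\frac{\alpha_h}{2h}\|\n\psi\|_{2h}^{2h}$, hence $\ge\cj(\psi)$ after $M\to\infty$; combined with $\langle\ran,\pran\rangle_n\to\langle\br,\psi\rangle$ (exactly as in \eqref{ineq_3}, as $\pran\to\psi$ uniformly on $\partial\Omega$) this yields $\liminf_n\ci^n_{\ran}(\pran)\ge\ci_{\br}(\psi)$. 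Chaining the inequalities gives $\ci_{\br}(\psi)\le\ci_{\br}(w)$ for all $w\in\X$, so $\psi=\pbr$ by uniqueness of the minimizer \cite[Theorem~1.3]{BDP}.

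Finally, for \eqref{barrho<} I would fix $n$ and use the same lower semicontinuity: for every $m\ge n$, dropping the nonnegative omitted terms,
\[
{\mathcal K}^n(\pbr)=\sum_{h=1}^n\tfrac{2h-1}{2h}\alpha_h\|\n\pbr\|_{2h}^{2h}\le\liminf_m\sum_{h=1}^n\tfrac{2h-1}{2h}\alpha_h\|\n\pran\|_{2h}^{2h}\le\liminf_m {\mathcal K}^m(\pran)=\lim_m{\mathfrak{k}}^m,
\]
where the last limit exists by the monotonicity in Lemma~\ref{le:hndec}. The main obstacle throughout is the single quantitative fact $\alpha_m^{1/(2m)}\to1$: it is precisely what guarantees that the pointwise constraint $\|\n\phi\|_\infty\le1$ survives in the limit, so that the limiting potential lives in $\X$ rather than merely in every $\X_{2m}$, and hence that $\ci_{\br}(\psi)$ is even meaningful; everything else is soft compactness plus lower semicontinuity.
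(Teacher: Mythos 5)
Your proof is correct and shares the paper's overall skeleton---tightness/Prokhorov for the measures, uniform bounds in each $\X_{2m}$ from the monotonicity of ${\mathfrak{k}}^n$ (Lemma~\ref{le:hndec}), diagonal extraction plus the compact embedding of Lemma~\ref{lem41}, identification of the limit potential as the unique minimizer of $\ci_{\br}$, and weak lower semicontinuity for \eqref{barrho<}---but it deviates in two places worth recording. First, where the paper disposes of the key step $\|\n\psi\|_\infty\le 1$ by citing \cite[Theorem~5.2]{BDP}, you prove it directly: from $\|\n\psi\|_{2m}\le D_m$ with $D_m^{2m}=\tfrac{2m}{(2m-1)\alpha_m}{\mathfrak{k}}^m$, the Stirling estimate $\alpha_m^{1/(2m)}\to 1$ and the boundedness of ${\mathfrak{k}}^m$ give $\limsup_m D_m\le 1$, and Chebyshev annihilates each super-level set $\{|\n\psi|>1+\eps\}$. (Strictly speaking you only obtain, and only need, $\limsup_m D_m\le 1$ rather than $D_m\to 1$, since ${\mathfrak{k}}^m$ could a priori tend to $0$.) Second, you organize the identification as a $\Gamma$-convergence sandwich, $\ci_{\br}(\psi)\le\liminf_n\ci^n_{\ran}(\pran)\le\limsup_n\ci^n_{\ran}(\pran)\le\ci_{\br}(w)$ for every competitor $w\in\X$, with the upper bound coming from monotone convergence of the partial sums in \eqref{serie} and the lower bound from truncation at a fixed level $M$ plus weak lower semicontinuity and uniform convergence on $\partial\Omega$; the paper instead sets $\ci:=\lim_n\ci^n$ and proves $\ci=\ci_{\br}(\phi)$ via the double-indexed inequality $\ci^m_{\rho^{\ast,m}}(\phi^n_{\rho^{\ast,n}})\le\ci^n_{\rho^{\ast,m}}(\phi^n_{\rho^{\ast,n}})$ followed by a subtraction trick. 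The ingredients (minimality of $\pran$ against competitors, partial-sum comparison, weak lower semicontinuity, uniform convergence on the compact boundary to handle the measure pairing) are the same and the conclusions identical; your version is more self-contained and reads as a standard $\Gamma$-limit argument, while the paper's explicit bookkeeping of the limiting value $\ci$ reuses inequalities it needs anyway. Two cosmetic slips: in your final display the liminf is taken over $m$ but the potentials are still written $\pran$ (they should be $\phi^m_{\rho^{\ast,m}}$), and, exactly as in the paper, all your convergences hold along the extracted subsequence only, which is all the lemma asserts.
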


\begin{proof} Let $\bar \rho\in P(\partial \Omega)$  such that, up to subsequences, 
$\rho^{\ast,n}\rightharpoonup \bar \rho$ weakly in the sense of measures, as $n \to \infty$. Let $m,n\in \N$ with $N/2< m\le n$. Since 
\[
{{\mathcal K}}^m(\phi^n_{\rho^{\ast,n}})\le {{\mathcal K}}(\phi^n_{\rho^{\ast,n}})={\mathfrak{k}^n\le \mathfrak{k}^1}, 
\]
we see that $\|\n \phi^n_{\rho^{\ast,n}}\|_{2}+\|\n \phi^n_{\rho^{\ast,n}}\|_{2m}\le C(m)$.  Therefore, up to a subsequence, $\{\phi^n_{\rho^{\ast,n}}\}_{n\in\N}$ is weakly convergent in $\X_{2m}$ and, by Lemma~\ref{lem41}, locally uniformly in $\R^N$. By a  diagonalization argument, we deduce that the weak limit $\phi\in \bigcap_{m\ge 1}\X_{2m}$.
%and 
%$$ 
%\phi^n_{\rho^{\ast,n}}\rightharpoonup \phi \mbox{ weakly in } W^{1,2}(\R^N)\cap W^{1,m}(\R^N) \mbox{ for all } m\in \N.
%$$
The fact that $\phi \in \X$ (i.e., $\|\nabla\phi\|_\infty \leq 1$) follows as in \cite[Theorem 5.2]{BDP}. Let $\ci:= \lim_{n\to \infty} \ci^n$. 

Let us show that $\ci=\ci_{\bar \rho}(\phi)$. First note that
$$
\ci^n \leq \ci^n_{\rho^{\ast,n}}(\phi) \leq \ci_{\rho^{\ast,n}}(\phi) = \ci_{\bar \rho}(\phi)+\underbrace{\langle \bar \rho-\rho^{\ast,n},\phi\rangle}_{\to 0 \mbox{ as } n\to\infty}
$$
which implies that $\ci \leq \ci_{\bar \rho}(\phi)$. Now let us show the reverse inequality. Next note that since $\ci^m_{\rho^{\ast,m}}(\phi^n_{\rho^{\ast,n}}) \leq \ci^n_{\rho^{\ast,m}}(\phi^n_{\rho^{\ast,n}})$, for all $m\le n$, and by weak lower semicontinuity
\begin{align*}
\ci^m_{\rho^{\ast,m}}(\phi) 
& \leq \liminf_n \ci^m_{\rho^{\ast,m}}(\phi^n_{\rho^{\ast,n}}) \leq \liminf_n \underbrace{\ci^n_{\rho^{\ast,n}}(\phi^n_{\rho^{\ast,n}})}_{=\mathcal{I}^n} + \lim_{n\to \infty} \langle \rho^{\ast,n}-\rho^{\ast,m}, \phi^{n}_{\rho^{\ast,n}}\rangle \\
& = \ci + \langle \bar \rho-\rho^{\ast,m},\phi\rangle
\end{align*}
where we have used that $\phi^{n}_{\rho^{\ast,n}}\to \phi$ uniformly on $\partial\Omega$ as $n\to \infty$. Subtracting $\langle \bar \rho- \rho^{\ast,m},\phi\rangle$ from both sides, we get
$$
\ci^m_{\bar \rho}(\phi) \leq \ci.
$$
Passing to the limit on $m$ we get $\ci_{\bar \rho}(\phi) \leq \ci$. Altogether we have obtained the claim $\I_{\bar\rho}(\phi)=\I$.

\medskip

To see that $\phi=\phi_{\bar \rho}$ observe first that we have the inequalities
$$
\ci^n \leq \ci^n_{\rho^{\ast,n}}(\psi) \leq \ci_{\bar\rho}(\psi) + \langle \bar \rho-\rho^{\ast,n},\psi\rangle \quad  \mbox{ for all } \psi\in \X.
$$
This implies
$$\ci_{\bar \rho}(\phi)= \ci = \lim_{n\to \infty} \ci^n \leq \ci_{\bar \rho}(\psi) \quad \mbox{ for all } \psi\in \X$$
so that $\phi=\phi_{\bar \rho}$. 

\medskip

Finally, it remains to show that \eqref{barrho<} holds. By the monotonicity of {$\mathfrak{k}^n$} from Lemma \ref{le:hndec}  and {$\mathfrak{k}^n\geq 0$} we infer that ${\mathfrak{k}}^\infty:= \lim_{n\to \infty} {\mathfrak{k}}^n$ exists. Notice that the lower semicontinuity and the fact that ${\mathcal{K}}^m(\phi^n_{\rho^{\ast,n}}) \le {\mathcal{K}}^n(\phi^n_{\rho^{\ast,n}})$ for $m\le n$, imply that, for every $m\in\N$, we have 
\[
{\mathcal{K}}^m(\phi_{\bar \rho}) \leq \liminf_{n} {\mathcal{K}}^m(\phi^n_{\rho^{\ast,n}}) 
 \leq \liminf_{n} {\mathcal{K}}^n(\phi^n_{\rho^{\ast,n}}) 
 = {\mathfrak{k}}^\infty.
\]
Hence \eqref{barrho<} holds. 
\end{proof}

Now we can finally show that the weak limit $\bar\rho$ of the sequence $\rho^{\ast,n}$ is indeed an equilibrium measure for the Born-Infeld energy $\E$.

\begin{proposition} 
\label{prop:equilibrium-bar}
The measure $\bar\rho$ is an equilibrium measure for $\mathcal{E}$ considered in the Section~\ref{se:equi}. In particular
we have $\phi_{  \rho^\ast}=\phi_{\bar \rho}$.  If $\partial\Omega$ is $C^{2,\alpha}$ for some $\alpha>0$ then $\bar\rho$ is the unique equilibrium measure for $\mathcal{E}$.
\end{proposition}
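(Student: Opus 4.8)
The plan is to reduce the whole statement to the single inequality $\mathcal{E}(\phi_{\bar\rho})\le \mathcal{E}(\phi_\mu)$ for every $\mu\in P(\partial\Omega)$; the reverse inequality is automatic since $\bar\rho\in P(\partial\Omega)$. The starting point is the identity $\mathcal{E}(\phi_{\bar\rho})=\mathfrak{k}^\infty$, which is already contained in the proof of Lemma~\ref{convergence_by_approximation}: there $\phi_{\bar\rho}$ is identified as the minimizer of $\mathcal{I}_{\bar\rho}$ with $\mathcal{I}_{\bar\rho}(\phi_{\bar\rho})=\mathcal{I}=\lim_n\mathcal{I}^n$, and, recalling $\mathcal{I}^n=-\mathfrak{k}^n$ (cf. Lemma~\ref{le:hndec}), this gives $\mathcal{E}(\phi_{\bar\rho})=-\mathcal{I}_{\bar\rho}(\phi_{\bar\rho})=\lim_n\mathfrak{k}^n=\mathfrak{k}^\infty$.

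The core step is the convergence $\mathcal{E}^n(\phi_\mu^n)\to\mathcal{E}(\phi_\mu)$ for each fixed $\mu\in P(\partial\Omega)$, equivalently $\mathcal{I}^n_\mu(\phi_\mu^n)\to\mathcal{I}_\mu(\phi_\mu)$. I would prove this by rerunning the argument of Lemma~\ref{convergence_by_approximation} with the constant sequence $\rho^{\ast,n}\equiv\mu$ in place of the minimizers: by Lemma~\ref{lem:bddn0} the family $\{\phi_\mu^n\}$ is bounded in every $\mathcal{X}_{2m}$ uniformly in $n$, so a diagonal extraction yields a weak limit $\psi\in\bigcap_m\mathcal{X}_{2m}$, which lies in $\mathcal{X}$ (i.e. $\|\nabla\psi\|_\infty\le 1$) by the reasoning of \cite[Theorem~5.2]{BDP}. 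Using $\mathcal{I}^m_\mu\le\mathcal{I}^n_\mu$ for $m\le n$ (a nonnegative series is truncated earlier), the weak lower semicontinuity of $\mathcal{I}^m_\mu$, and $\langle\mu,\phi_\mu^n\rangle\to\langle\mu,\psi\rangle$ (uniform convergence on the compact set $\partial\Omega$), one gets $\mathcal{I}^m_\mu(\psi)\le\lim_n\mathcal{I}^n_\mu(\phi_\mu^n)$ for every $m$; letting $m\to\infty$ yields $\mathcal{I}_\mu(\psi)\le\lim_n\mathcal{I}^n_\mu(\phi_\mu^n)$. On the other hand $\mathcal{I}^n_\mu(\phi_\mu^n)\le\mathcal{I}^n_\mu(\phi_\mu)\le\mathcal{I}_\mu(\phi_\mu)$, since $\phi_\mu\in\mathcal{X}\subset\mathcal{X}_{2n}$ and $\mathcal{I}^n_\mu$ truncates the nonnegative series defining $\mathcal{I}_\mu$. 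By the minimality and uniqueness of $\phi_\mu$ these force $\psi=\phi_\mu$ and $\lim_n\mathcal{I}^n_\mu(\phi_\mu^n)=\mathcal{I}_\mu(\phi_\mu)$, as claimed. This convergence is exactly the step I expect to carry the technical weight; everything around it is bookkeeping with monotone or truncated series.

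Granting this, since $\rho^{\ast,n}$ minimizes $\rho\mapsto\mathcal{K}^n(\phi_\rho^n)=\mathcal{E}^n(\phi_\rho^n)$ over $P(\partial\Omega)$, for every $\mu$ we have $\mathfrak{k}^n=\mathcal{K}^n(\phi^n_{\rho^{\ast,n}})\le\mathcal{K}^n(\phi_\mu^n)=\mathcal{E}^n(\phi_\mu^n)$, and passing to the limit gives $\mathfrak{k}^\infty\le\mathcal{E}(\phi_\mu)$. Combining with the first paragraph, $\mathcal{E}(\phi_{\bar\rho})=\mathfrak{k}^\infty\le\inf_{\mu\in P(\partial\Omega)}\mathcal{E}(\phi_\mu)\le\mathcal{E}(\phi_{\bar\rho})$, so all quantities coincide and $\bar\rho$ is an equilibrium measure for $\mathcal{E}$. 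The two remaining assertions are then immediate: since the equilibrium potential is unique (Corollary~\ref{cor:unique_potential}), $\phi_{\bar\rho}=\phi_{\rho^*}$; and if $\partial\Omega\in C^{2,\alpha}$ the equilibrium measure itself is unique (Corollary~\ref{cor:unique_measure}), so $\bar\rho=\rho^*$ is that unique equilibrium measure. The only genuine obstacle is the verification that the diagonal weak limit $\psi$ belongs to $\mathcal{X}$, which is precisely the $\Gamma$-liminf mechanism already established in the proof of Lemma~\ref{convergence_by_approximation}.
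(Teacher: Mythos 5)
Your proposal is correct, but it follows a genuinely different route from the paper. The paper's proof is a two-line comparison built on the variational identity \eqref{ineq_equilibrium_approx}: passing that identity to the limit via Lemma~\ref{convergence_by_approximation} (uniform convergence of $\phi^n_{\rho^{\ast,n}}$ on $\partial\Omega$ and weak convergence of $\rho^{\ast,n}$) gives $\langle \bar\rho,\phi_{\bar\rho}\rangle=\langle\mu,\phi_{\bar\rho}\rangle$ for every $\mu\in P(\partial\Omega)$, whence $\ci_\mu(\phi_\mu)\le\ci_\mu(\phi_{\bar\rho})=\ci_{\bar\rho}(\phi_{\bar\rho})+\langle\bar\rho-\mu,\phi_{\bar\rho}\rangle=\ci_{\bar\rho}(\phi_{\bar\rho})$, which is exactly the equilibrium property. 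You instead never invoke \eqref{ineq_equilibrium_approx} (hence you bypass the constancy of the approximated equilibrium potentials on $\overline{\Omega}$, i.e.\ the maximum-principle machinery behind Theorem~\ref{th:exis-n}); what you need from Section~\ref{sec:approx} is only the existence of $\rho^{\ast,n}$, the identity $\ci^n=-\mathfrak{k}^n$, the monotonicity of Lemma~\ref{le:hndec}, and Lemma~\ref{convergence_by_approximation}, and you pay for this with a new technical step: the per-measure energy convergence $\mathcal{E}^n(\phi^n_\mu)\to\mathcal{E}(\phi_\mu)$, proved by rerunning the diagonal compactness/lower-semicontinuity argument of Lemma~\ref{convergence_by_approximation} with the constant sequence $\mu$. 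That step is sound (the inequality chain $\ci_\mu(\psi)\le\liminf_n\ci^n_\mu(\phi^n_\mu)\le\limsup_n\ci^n_\mu(\phi^n_\mu)\le\ci_\mu(\phi_\mu)\le\ci_\mu(\psi)$ closes correctly, and monotonicity of $n\mapsto\ci^n_\mu(\phi^n_\mu)$ removes any subsequence ambiguity), and it buys something the paper does not state explicitly: the convergence of the minimal energies, $\lim_n\mathfrak{k}^n=\inf_{\mu\in P(\partial\Omega)}\mathcal{E}(\phi_\mu)$, a $\Gamma$-convergence-flavoured byproduct. One small point to tighten: Lemma~\ref{lem:bddn0} bounds $\|\nabla\phi^n_\mu\|_2+\|\nabla\phi^n_\mu\|_{2n}$, so to get boundedness in $\X_{2m}$ for fixed $m\le n$ you should either interpolate the $L^{2m}$-norm between $L^2$ and $L^{2n}$, or argue as in Lemma~\ref{convergence_by_approximation} via $\mathcal{K}^m(\phi^n_\mu)\le\mathcal{K}^n(\phi^n_\mu)\le\mathcal{K}^{n_0}(\phi^{n_0}_\mu)$ using Lemma~\ref{le:monotonicity}; this is a one-line fix, not a gap. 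The endgame (uniqueness of the potential via Corollary~\ref{cor:unique_potential} and of the measure via Corollary~\ref{cor:unique_measure}) coincides with the paper's.
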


\begin{proof}
By \eqref{ineq_equilibrium_approx} and Lemma \ref{convergence_by_approximation}, we have that for every $\mu \in P(\partial\Omega)$
\begin{equation*}
%\label{ineq_equilibrium-bar}
\bar \lambda:= \langle \bar \rho,\phi_{\bar \rho} \rangle = \langle  \mu,\phi_{\bar \rho} \rangle.
\end{equation*}
Hence
\[
\ci_\mu(\phi_\mu)\le \ci_\mu(\phi_{\bar \rho})=\ci_{\bar \rho}(\phi_{\bar \rho})+\langle \bar \rho-\mu ,\phi_{\bar \rho}\rangle  = \ci_{\bar \rho}(\phi_{\bar \rho}),
\]
which implies that ${\bar \rho}$ is an equilibrium measure for $\mathcal{E}$. Since by Corollary~\ref{cor:unique_potential} we know that the equilibrium potential for ${\mathcal E}$ is unique, this implies $\phi_{  \rho^\ast}=\phi_{\bar \rho}$. Uniqueness of the equilibrium measure under the assumption $\partial\Omega\in C^{2,\alpha}$ is already contained in Corollary~\ref{cor:unique_measure}. 
\end{proof}

\section{Characterization of balls via equilibrium measures} \label{sec:ball}

\begin{theorem} \label{ball}
Let $\Omega\subset \R^N$ be of class $C^{2,\alpha}$ and let $\phi_{\rho^\ast}$ be the  equilibrium potential for the   Born-Infeld
electrostatic model. Then $\Omega$ is a ball if and only if the  equilibrium distribution $\rho^\ast$ is a constant multiple of the surface
measure $d\sigma$ on $\partial\Omega$. The same characterization is true for the approximated model. 
\end{theorem}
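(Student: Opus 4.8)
\medskip
The plan is to prove the two implications separately and to treat the Born--Infeld model and its approximation in parallel, the crucial reduction being the same in both cases.

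\textbf{The easy implication.} If $\Omega=B_R(x_0)$ is a ball, then problem \eqref{eq:oc} is invariant under the rotations about $x_0$. By the uniqueness of the equilibrium potential (Corollary~\ref{cor:unique_potential}) and of the equilibrium measure (Corollary~\ref{cor:unique_measure}), both $\phi_{\rho^\ast}$ and $\rho^\ast$ must inherit this symmetry; hence $\rho^\ast$ is rotationally invariant on $\partial B_R(x_0)$, i.e.\ a constant multiple of $d\sigma$. Equivalently, the radial solution provided by Proposition~\ref{pr:lrho} has constant normal derivative on the sphere, so \eqref{eq:rhol} yields $d\rho^\ast=c\,d\sigma$. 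The same reasoning applies to the approximated model through \eqref{equi_n_measure}.

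\textbf{Reducing the converse to an overdetermined problem.} Assume now $d\rho^\ast=c\,d\sigma$ on $\partial\Omega$; since $\rho^\ast$ is a probability measure, $c=1/\sigma(\partial\Omega)>0$. By Proposition~\ref{pr:lrho} (together with Theorem~\ref{main2}) we have $d\rho^\ast=-\tfrac{\partial_\nu\phi_{\rho^\ast}}{\sqrt{1-|\partial_\nu\phi_{\rho^\ast}|^2}}\,d\sigma$, and since $s\mapsto s/\sqrt{1-s^2}$ is strictly increasing on $(0,1)$, the identity $-\partial_\nu\phi_{\rho^\ast}/\sqrt{1-|\partial_\nu\phi_{\rho^\ast}|^2}\equiv c$ forces $\partial_\nu\phi_{\rho^\ast}$ to be a negative constant $c'$ on $\partial\Omega$. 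Moreover, because $c>0$ everywhere while $\partial_\nu\phi_{\rho^\ast}=0$ on the boundary of any bounded connected component of $\R^N\setminus\overline\Omega$ (there $\phi_{\rho^\ast}\equiv\lambda^\ast$ by Proposition~\ref{pr:lrho}), no such component can occur and $\R^N\setminus\overline\Omega=Z_0$ is connected. Writing $\phi:=\phi_{\rho^\ast}$, we are left with the overdetermined exterior problem
\[
\begin{gathered}
-\dv\Big(\frac{\nabla\phi}{\sqrt{1-|\nabla\phi|^2}}\Big)=0 \quad\text{in } \R^N\setminus\overline\Omega,\\
\phi=\lambda^\ast,\quad \partial_\nu\phi\equiv c' \ \text{ on } \partial\Omega, \qquad \lim_{|x|\to\infty}\phi(x)=0.
\end{gathered}
\]

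\textbf{Moving planes.} To deduce that $\Omega$ is a ball I would run the method of moving hyperplanes in the exterior domain, in the spirit of \cite{mendez_reichel,reichel}; all required ingredients are available. By Theorem~\ref{main2}, $\phi$ is strictly spacelike with $|\nabla\phi|\le 1-\theta(r)$ on each $\overline{B}_r(0)$, so the equation is locally uniformly elliptic and $\phi\in C^{2,\alpha}$ up to $\partial\Omega$. Fix a direction $e_1$; for a hyperplane $\{x_1=\mu\}$ the reflected function $\phi_\mu$ solves the same equation, and linearizing as in Proposition~\ref{pr:rhol} the difference $w=\phi-\phi_\mu$ satisfies
\[
\dv\big(M(x)\nabla w\big)=0,\qquad M(x)=\int_0^1 D\mathbf a\big(\nabla\phi_\mu+t\nabla w\big)\,dt,\qquad \mathbf a(\xi)=\frac{\xi}{\sqrt{1-|\xi|^2}}.
\]
Since $D\mathbf a(\xi)=(1-|\xi|^2)^{-1/2}I+(1-|\xi|^2)^{-3/2}\,\xi\otimes\xi\ge I$, the matrix $M(x)\ge I$ is positive definite and bounded on bounded sets, so the strong maximum principle and Hopf's lemma apply to $w$. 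Because $\phi$ decays at infinity with the asymptotics of the Newtonian potential, the procedure can be initiated from $\mu=+\infty$; sliding the plane until its reflected cap first meets $\overline\Omega$ and using the constant Dirichlet and Neumann data at the critical position, one obtains that $\Omega$ is symmetric across the critical hyperplane and that $\phi$ is monotone in $x_1$ on each cap. Letting $e_1$ range over all directions yields radial symmetry of $\phi$ and $\Omega$ about a single centre, so $\Omega$ is a ball. For the approximated model the identical scheme applies to $\phi=\phi^n_{\rho^{\ast,n}}$ solving \eqref{eq:phil_n}, with the overdetermined condition coming from \eqref{equi_n_measure} and the monotonicity of $s\mapsto\big(\sum_{h=1}^n\alpha_h s^{2h-2}\big)s$; here the linearized matrix is $\sum_{h=1}^n\alpha_h\big(|\xi|^{2h-2}I+(2h-2)|\xi|^{2h-4}\xi\otimes\xi\big)\ge\alpha_1 I=I$, uniformly elliptic from below thanks to the linear term $h=1$, and the needed $C^{1,\alpha}$ (resp.\ $C^{2,\alpha}$) regularity up to $\partial\Omega$ is supplied by \cite{lieberman} and Remark~\ref{c2-alpha}, so no degeneracy obstructs the argument.

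\textbf{Main obstacle.} The delicate point is the moving-plane analysis in the unbounded exterior domain, and above all its initialization near infinity: this requires the precise (Newtonian) decay rate of $\phi$ and a comparison argument that remains valid although the ellipticity constants of the Born--Infeld operator depend on $r$. Establishing the start at infinity --- e.g.\ via the asymptotic expansion of $\phi$ or a Kelvin-type transform --- is where the real work lies; once the first plane can be placed and the strong maximum principle and Hopf lemma are in force, the conclusion follows by the standard Serrin-type argument.
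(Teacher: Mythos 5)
Your proposal is correct and follows essentially the same route as the paper: the easy direction via uniqueness of the equilibrium potential and measure (Corollary~\ref{cor:unique_potential}, Corollary~\ref{cor:unique_measure}, Theorem~\ref{th:exis-n}), then the reduction of the converse to an overdetermined exterior problem using the strict monotonicity of $s\mapsto s/\sqrt{1-s^2}$ and $s\mapsto\bigl(\sum_{h=1}^n\alpha_h s^{2h-2}\bigr)s$ together with the sign information from Proposition~\ref{pr:lrho} (which also rules out bounded components of $\R^N\setminus\overline{\Omega}$), and finally a Serrin-type moving-plane argument. The one substantive difference is how that last step is closed. The paper does not re-run the moving-plane machinery: it invokes \cite[Theorem~1]{reichel} as a black box, whose hypotheses (condition (I)$_1$ on the nonlinearity, the constant Dirichlet and Neumann data, decay at infinity) are exactly what the reduction produces; in particular the initialization of the planes near infinity --- which you single out as the ``main obstacle'' and leave unproven --- is already contained in the proof of that cited theorem, so it is not an obstacle at all. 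Conversely, the point the paper treats with care, and which your sketch addresses only implicitly, is that for the Born--Infeld model $g(s)=(1-s^2)^{-1/2}$ is defined only on $[0,1)$, not on $[0,\infty)$ as \cite[Theorem~1]{reichel} formally requires; the paper resolves this by inspecting the proof of the cited theorem and observing that (I)$_1$ is needed only along the range of $|\nabla\phi_{\rho^\ast}|$, which by Theorem~\ref{main2} lies in a compact subset of $[0,1)$, so the linearized equation for the reflected difference is uniformly elliptic on compact sets --- this is precisely the content of your bound $M(x)\ge I$ together with the local boundedness of $M$. In short, your argument becomes a complete proof once ``run the moving planes in the spirit of \cite{mendez_reichel,reichel}'' is replaced by ``apply \cite[Theorem~1]{reichel}, noting that its proof uses (I)$_1$ only on the range of $|\nabla\phi_{\rho^\ast}|$''; no asymptotic expansion or Kelvin-type transform is required.
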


\begin{proof}
One direction of the theorem is trivial: if $\Omega$ is a ball, then the uniqueness of the equilibrium potentials (see Corollary~\ref{cor:unique_potential} and Theorem~\ref{th:exis-n}) implies their radial symmetry. In particular, $\partial_\nu \phi_{\rho^\ast}$ and $\partial_\nu \phi^n_{\rho^{\ast,n}}$ are constant on the boundary of the ball. Therefore, using Proposition~\ref{pr:rhol} for $\phi_{\rho^\ast}$ and its counterpart, formula \eqref{equi_n_measure}, for $\phi^n_{\rho^{\ast,n}}$, the corresponding equilibrium measures given by 
\begin{equation} \label{equi_measure_ball}
d\rho^\ast=-\frac{\de_\nu \phi_{\rho^\ast}}{\sqrt{1-|\de_\nu \phi_{\rho^\ast}|^2}} d\sigma
\end{equation}
and 
\begin{equation} \label{equi_n_measure_ball}
d\rho^{\ast,n} = -\left(\sum_{h=1}^n \alpha_h |\partial_\nu \phi^n_{\rho^{\ast,n}}|^{2h-2}\right)\partial_\nu \phi^n_{\rho^{\ast,n}} \,d\sigma
\end{equation}
are both are constant multiples of $d\sigma$ on the boundary of the ball.

\medskip

\noindent Now we consider the nontrivial part of the theorem. It will follow from an application and slight modification of \cite[Theorem~1]{reichel}, which itself is based on the moving plane method of Alexandrov-Serrin \cite{Alexandrov62, Serrin71}. Define 
$$
g(s) = \frac{1}{\sqrt{1-s^2}} \mbox{ for } s\in [0,1) \mbox{ and } g_n(s) = \sum_{h=1}^n \alpha_h s^{2h-2} \mbox{ for } s\in [0,\infty)
$$
with $\alpha_h$ as given in \eqref{defi_alpha_h}. Notice that 
$$
g \in C^2[0,1), \mbox{ and } g(s)>0, (g(s)s)'>0 \mbox{ for all } s\in [0,1)
$$
and
$$
g_n \in C^2[0,\infty), \mbox{ and } g_n(s)>0, (g_n(s)s)'>0 \mbox{ for all } s\in [0,\infty).
$$
 Observe that the function $g_n$ directly satisfies condition (I)$_1$ of \cite[Theorem~1]{reichel}, whereas $g$ satisfies (I)$_1$ not on $[0,\infty)$ but only on $[0,1)$. As we will see later this is still sufficient since $[0,1)$ covers the range of $|\nabla \phi_{\rho^\ast}|$.

Next we recall the properties of $\phi_{\rho^\ast}$ and $\phi^n_{\rho^\ast,n}$. Since we are assuming that $\rho^*$ and $\rho^{*,n}$ are constant multiples of the surface measure $d\sigma$ on $\partial\Omega$, and by \eqref{equi_measure_ball} and \eqref{equi_n_measure_ball}, we get that the functions $g(\partial_\nu \phi_{\rho^\ast}) \partial_\nu \phi_{\rho^\ast}$ and $g_n(\partial_\nu \phi^n_{\rho^{\ast,n}}) \partial_\nu \phi^n_{\rho^{\ast,n}}$ are both constant on $\partial\Omega$. Since $g(s)s$ and $g_n(s)s$ are strictly increasing this implies that $\partial_\nu \phi_{\rho^\ast}$ and 
$\partial_\nu \phi^n_{\rho^{\ast,n}}$ are both constant on $\partial\Omega$. Proposition~\ref{pr:lrho} implies for $\phi_{\rho^\ast}$ that this constant is strictly negative. The same is true for $\phi^n_{\rho^{\ast,n}}$ by the counterpart of Proposition~\ref{pr:lrho} explained after \eqref{equi_n_measure}. The fact that $\partial_\nu \phi_{\rho^\ast}$ is strictly negative on $\partial\Omega$ implies again by Proposition~\ref{pr:lrho} that $\R^N\setminus\overline{\Omega}$ only consists of one connected (unbounded) component and $0<\phi_{\rho^*}< \l^\ast$ in $\R^N\setminus\overline{\Omega}$. The same holds for $\phi^n_{\rho^{\ast,n}}$.

We can now list the properties of the equilibrium potentials. Using Theorem \ref{main2}, Proposition~\ref{pr:lrho} and Proposition~\ref{pr:rhol},  for every ball $B_r(0)$, with $r>0$ large enough, the potential $\phi_{\rho^\ast}$ is a $C^{2,\alpha}(\overline{B_r(0)}\setminus\Omega)$-solution of 
\begin{equation}\label{eqi_bi}
\left\{
\begin{array}{rcll}
-\operatorname{div}(g(|\nabla \phi_{\rho^{\ast}}|)\nabla\phi_{\rho^\ast})&=& 0,  &\hbox{in }\mathbb{R}^N\setminus \overline{\O},
\\
 0\ < \ \phi_{\rho^\ast} &<& \lambda^\ast, &
\hbox{in }\mathbb{R}^N\setminus \overline{\O},
\\
\phi_{\rho^\ast}&=&\lambda^\ast & \hbox{on }\partial \O,
\\
\partial_\nu \phi_{\rho^\ast}&=&\const <0 & \hbox{on }\partial \O,
\\
\displaystyle\lim_{|x|\to \infty}\phi_{\rho^\ast}(x)&=& 0
\end{array}
\right.
\end{equation}
with $\sup_{\overline{B_r(0)}} |\nabla \phi_{\rho^\ast}|= 1-\theta(r)<1$ for some $\theta(r)\in (0,1)$. 

Similarly, as stated in Remark~\ref{c2-alpha}, for every ball $B_r(0)$, with $r>0$ large enough, the potential $\phi^n_{\rho^{\ast,n}}$ is a $C^{2,\alpha}(\overline{B_r(0)}\setminus\Omega)$-solution of  
\begin{equation}\label{eqi_bi_approx}
\left\{
\begin{array}{rcll}
-\operatorname{div}( g_n(|\nabla \phi^n_{\rho^{\ast,n}}|)\nabla\phi^n_{\rho^{\ast,n}})&=& 0,   & \hbox{in }\mathbb{R}^N\setminus \overline{\O},
\\
0\ <\ \phi^n_{\rho^{\ast,n}} &<& \lambda^\ast & \hbox{in }\mathbb{R}^N\setminus \overline{\O},
\\
\phi^n_{\rho^{\ast,n}}&=& \lambda^{\ast,n}& \hbox{on }\partial \O,
\\
\partial_\nu \phi^n_{\rho^{\ast,n}}&=&\const < 0 & \hbox{on }\partial \O,
\\
\displaystyle\lim_{|x|\to \infty}\phi^n_{\rho^{\ast,n}}(x)&=& 0.
\end{array}
\right.
\end{equation}
To finish the proof, let us first consider the approximated model  \eqref{eqi_bi_approx}. Here the function $g_n$ and the problem \eqref{eqi_bi_approx} directly fulfill the assumptions of case (I) of  \cite[Theorem~1]{reichel} which yields radial symmetry of $\phi^n_{\rho^{\ast,n}}$ and that $\Omega$ is a ball. 

Now let us consider the Born-Infeld model \eqref{eqi_bi}. Here the function $g$ is not defined on $[0,\infty)$ as required by \cite[Theorem~1]{reichel}, but only on $[0,1)$, where it also satisfies the requirement (I)$_1$, namely $g\in C^2[0,1)$, $g(s)>0, (g(s)s)'>0$ on $[0,1)$. If one investigates the proof of \cite[Theorem~1]{reichel} then it is clear that (I)$_1$ is required for the linearized equation fulfilled by $w(x) = \phi_{\rho^*} (x^\lambda)-\phi_{\rho^*}(x)$. Here $x^\lambda$ stands for the reflection of $x$ at a hyperplane $T_\lambda:=\left\{x\in\R^N:x_1=\l \right\}$ and $\lambda$ denotes the geometrically critical position of the hyperplane. This linearized equation is determined as 
\begin{equation} \label{BI_linearized}
\operatorname{div}(A(x)\nabla w)=0
\qquad\hbox{ on } \{x\in \R^N: x_1>\lambda \hbox{ and } x^\lambda\not\in \overline{\Omega} \},
\end{equation}
where
\[
A(x) \nabla w
:=
\int_0^1 \frac{d}{dt} [g(|\nabla w_t |) \nabla w_t] dt
=\left[\int_0^1 \left(\frac{1}{\sqrt{1-|\nabla w_t|^2}}\operatorname{Id}+ \frac{\nabla w_t \otimes \nabla w_t}{(1- |\nabla w_t|^2)^{3/2}}\right) dt \right] \nabla w
\]
and $ w_t(x) := t\phi_{  \rho^*}(x^\lambda)+(1-t)\phi_{  \rho^*}(x)$. Clearly, when $x$ ranges in a compact subset $K$ of the domain of definition of $w$ then $|\nabla w_t(x)|$ takes values in a compact subset of $[0,1)$ and $A(x)$ is uniformly positive definite on $K$. As a consequence, \eqref{BI_linearized} is uniformly elliptic in $K$. This is sufficient for the proof of \cite[Theorem~1]{reichel} when applied to problem \eqref{eqi_bi}. In fact, this shows that it is enough to satify the requirement (I)$_1$ on $\range(|\nabla \phi_{\rho^\ast}|)$ instead of $[0,\infty)$. As a result, we get that  $\phi_{\rho^\ast}$ is radially symmetric and that $\Omega$ is a ball. This completes the proof of Theorem~\ref{ball}.
\end{proof}

\section*{Acknowledgements}

D. Bonheure is supported by MIS F.4508.14 (FNRS), PDR T.1110.14F (FNRS) \& ARC AUWB-2012-12/17-ULB1- IAPAS. 
P. d'Avenia and A. Pomponio are partially supported by the group GNAMPA of INdAM, by FRA2016 of Politecnico di Bari and  by PRIN 2017JPCAPN {\em Qualitative and quantitative aspects of nonlinear PDEs}. W. Reichel gratefully acknowledges funding by the Deutsche Forschungsgemeinschaft (DFG, German Research Foundation) – Project-ID 258734477 – SFB 1173.


\begin{thebibliography}{99}


%\bibitem{AAC}
%G. Alberti, L. Ambrosio, X. Cabr\'e, {\it On a Long-standing Conjecture of E. De Giorgi: Symmetry in 3D for General Nonlinearities
%and a Local Minimality Property}, Acta Appl. Math. {\bf 65} (2001), 9--33.
%
%\bibitem{A}
%A. Azzollini, {\it Ground state solution for a problem
%with mean curvature operator in Minkowski space}, J. Funct. Anal., {\bf 266} (2014), 2086--2095.
%

\bibitem{Alexandrov62} A.D. Alexandrov, {\it A characteristic property of spheres,} Ann. Mat. Pura Appl. (4) {\bf 58} (1962), 303--315. 

\bibitem{BS}
R. Bartnik, L. Simon, {\it Spacelike hypersurfaces with prescribed boundary values and mean curvature,} Comm. Math. Phys. {\bf 87} (1982), 131--152.

\bibitem{bereanu_jebelan_mawhin}
C. Bereanu, P. Jebelean, J. Mawhin, {\it The Dirichlet problem with mean curvature operator in Minkowski space -- a variational approach,} Adv. Nonlinear Stud. {\bf 14} (2014), 315--326.
%\bibitem{BJM09}
%C. Bereanu, P. Jebelean, J. Mawhin, {\it Radial solutions for some nonlinear problems involving mean curvature operators in Euclidean and Minkowski spaces}, Proc. Amer. Math. Soc., {\bf 137} (2009), 161--169.
%
%\bibitem{BJM10}
%C. Bereanu, P. Jebelean, J. Mawhin, {\it Radial solutions for Neumann problems involving mean curvature operators in Euclidean and Minkowski spaces}, Math. Nachr. {\bf 283} (2010), 379--391.
%
%\bibitem{BJT}
%C. Bereanu, P. Jebelean, P.J. Torres, {\it Multiple positive radial solutions for a Dirichlet problem involving the mean curvature operator in Minkowski space}, J. Funct.
%Anal. {\bf 265} (2013), 644--659.

\bibitem{prob}
P. Billingsley, {\it Convergence of probability measures, Wiley Series in Probability and Statistics: Probability and Statistics,} John Wiley \& Sons, Inc., New York, 1999.



\bibitem{Bon-Iac1} D. Bonheure, A. Iacopetti, {\it Spacelike radial graphs of prescribed mean curvature in the Lorentz-Minkowski space}, Analysis \& PDE {\bf 12} (2019), 1805--1842.


\bibitem{Bon-Iac2} D. Bonheure, A. Iacopetti, {\it On the regularity of the minimizer of the electrostatic Born-Infeld energy}, Arch. Ration. Mech. Anal. {\bf 232} (2019), 697--725.

\bibitem{BDP} D. Bonheure, P. d'Avenia, A. Pomponio, {\it On the electrostatic Born-Infeld equation with extended charges,} Comm. Math. Phys. {\bf 346} (2016), 877--906.


%\bibitem{BDD}
%D. Bonheure, A. Derlet, C. De Coster, {\it Infinitely many radial solutions of a mean curvature equation in Lorentz-Minkowski space}, Rend. Istit. Mat. Univ. Trieste {\bf 44} (2012), 259--284.
%
\bibitem{Bnat}
M. Born, {\it Modified field equations with a finite radius of the electron,} Nature {\bf 132} (1933), 282.

\bibitem{B}
M. Born, {\it On the quantum theory of the electromagnetic field,} Proc. Roy. Soc. London Ser. A {\bf 143} (1934), 410--437.

\bibitem{BInat}
M. Born, L. Infeld, {\it Foundations of the new field theory,} Nature {\bf 132} (1933), 1004.

\bibitem{BI}
M. Born, L. Infeld, {\it Foundations of the new field theory,} Proc. Roy. Soc. London Ser. A {\bf 144} (1934), 425--451.

\bibitem{brenier}
{Y. Brenier, {\it Hydrodynamic Structure of the Augmented Born-Infeld Equations}, Arch. Rational Mech. Anal. {\bf 172} (2004), 65--91.}



%%\bibitem{BM}
%%H. Brezis, J. Mawhin, {\it Periodic solutions of the forced relativistic pendulum}, Differential Integral Equations {\bf 23} (2010), 801--810.
%
%\bibitem{CY}
%S.-Y. Cheng and S.-T. Yau, {\it Maximal space-like hypersurfaces in the Lorentz-Minkowski spaces}, Ann. of Math. 104 (1976), 407--419.
%
%\bibitem{DP}
%P. d'Avenia, L. Pisani, {\it Nonlinear Klein-Gordon equations coupled with Born-Infeld type equations}, Electron. J. Differential Equations 2002, No. 26, 13 pp. 



\bibitem{calabi} E. Calabi, {\it Examples of Bernstein problems for some nonlinear equations,} 1970 Global Analysis (Proc. Sympos. Pure Math., Vol. XV, Berkeley, Calif., 1968) pp. 223--230 Amer. Math. Soc., Providence, R.I.


\bibitem{CK}
{H. Carley, M.K.-H. Kiessling, {\it Constructing graphs over $\R^n$ with small prescribed mean-curvature}, Math. Phys. Anal. Geom. {\bf 18} (2015), no. 1, Art. 11.}

\bibitem{cheng_yau} 
S.Y. Cheng, S.T. Yau, {\it Maximal space-like hypersurfaces in the Lorentz-Minkowski spaces,} Ann. of Math. {\bf 104} (1976), 407--419.

\bibitem{ebenfelt} 
P. Ebenfelt, D. Khavinson, H. Shapiro, {\it A free boundary problem related to single-layer potentials,} Ann. Acad. Sci. Fenn. Math. {\bf 27} (2002), no. 1, 21--46. 

\bibitem{E}
{K. Ecker, {\it Area maximizing hypersurfaces in Minkowski space having an isolated singularity}, Manuscr. Math. {\bf 56}  (1986), 375--397.}


\bibitem{EkTe} 
I. Ekeland, R. T\'emam, {\it Convex analysis and variational problems,} SIAM, Philadelphia (1999).

%
%\bibitem{Fey}
%R.P. Feynman, R.B. Leighton, M. Sands, {\it The Feynman lectures on physics}, vol. 2, Addison-Wesley, London (1964).
%
%\bibitem{Fonseca-Leoni} I. Fonseca, G. Leoni, {\it Modern Methods in the Calculus of Variations: $L^{p}$ Spaces}, Springer, New York (2007). 
%

\bibitem{FOP}
D. Fortunato, L. Orsina, L. Pisani,  {\it Born-Infeld type equations for electrostatic fields,} J. Math. Phys. {\bf 43} (2002), 5698--5706.

\bibitem{gardiner}
S. Gardiner, {\it An equilibrium measure characterization of circles,} Forum Math. {\bf 14} (2002), 953--954. 

\bibitem{garofalo} 
N. Garofalo, E. Sartori, {\it Symmetry in exterior boundary value problems for quasilinear elliptic equations via blow-up and a priori estimates,} Adv. Differential Equations {\bf 4} (1999), 137--161.

\bibitem{GF}
I.M. Gelfand, S.V. Fomin, \textit{Calculus of variations,} Prentice-Hall, Inc., Englewood Cliffs, N.J. 1963.

\bibitem{Gerhardt} C. Gerhardt, {\it H-Surfaces in Lorentzian Manifolds}, Commun. Math. Phys. {\bf 89} (1983), 523--553.

%\bibitem{Gibb98}
%G.W. Gibbons, {\it Born-Infeld particles and Dirichlet $p$-branes}, Nuclear Phys. B {\bf 514} (1998), 603--639.
%

\bibitem{GT}
D. Gilbarg, N.S. Trudinger, {\it Elliptic partial differential equations of second order,} Springer-Verlag, Berlin (2001).


 
\bibitem{hewitt_stromberg} 
E. Hewitt, K. Stromberg, \textit{Real and abstract analysis. A modern treatment of the theory of functions of a real variable,} Third printing. Graduate Texts in Mathematics, No. 25. Springer-Verlag, New York-Heidelberg, 1975. x+476 pp. 


\bibitem{K2}
M.K.-H. Kiessling, {\it Some uniqueness results for stationary solutions to the Maxwell-Born-Infeld field equations and their physical consequences,}  Phys. Lett. A 375 (2011), 3925--3930.


\bibitem{K}
M.K.-H. Kiessling, {\it On the quasi-linear elliptic PDE $-\n \cdot (\nabla u/\sqrt{1-|\nabla u|^2})=4\pi\sum_{k} a_k \d_{s_k}$  in physics and geometry,} Comm. Math. Phys. {\bf 314} (2012), 509--523.

\bibitem{K-corr}
{M.K.-H. Kiessling, {\it Correction to: On the quasi-linear elliptic PDE $-\n \cdot (\nabla u/\sqrt{1-|\nabla u|^2})=4\pi\sum_{k} a_k \d_{s_k}$  in physics and geometry,} Comm. Math. Phys. {\bf 364} (2018), 825--833.}


\bibitem{KM}
{A.A. Klyachin, V.M. Miklyukov, {\it Existence of solutions with singularities for the maximal surface equation in Minkowski space}, Russ. Acad. Sci. Sb. Math. {\bf 80} (1995), 87--104.}


\bibitem{L}
{N.S. Landkof, {\it Foundations of modern potential theory},  Springer Verlag, Berlin, 1972.}


\bibitem{lieberman}
G.M. Lieberman, {\it Boundary regularity for solutions of degenerate elliptic equations,} Nonlinear Anal. {\bf 12} (1988), 1203--1219.

\bibitem{Lopez2014} R. L\'opez, {\it Differential Geometry of curves and surfaces in Lorentz-Minkowski space}, International Electronic Journal of Geometry {\bf 7} (2014), 44--107.

\bibitem{malory} K. Mallory, R. Van Gorder, K. Vajravelu, {\it Several classes of exact solutions to the 1+1 Born-Infeld equation,} Commun. Nonlinear Sci. Numer. Simul. {\bf 19} (2014), 1669--1674.

\bibitem{mendez_reichel} O. Mendez, W. Reichel, {\it Electrostatic characterization of spheres,} Forum Math. {\bf 12} (2000), 223--245. 

\bibitem{reichel}
W. Reichel, {\it Radialy symmetry for an electrostatic, a capillarity and some fully nonlinear overdetermined problems on exterior domains,} Z. Anal. Anwendungen {\bf 15} (1996), 619--635.


\bibitem{ST}
{E.B. Saff, V. Totik, {\it Logarithmic potentials with external fields}, Springer Verlag, Berlin, 1997.}


%
%\bibitem{LL}
%E.H. Lieb, M. Loss,
%{\it Analysis},
%%. Second edition. Graduate Studies in Mathematics, 14.
%American Mathematical Society, Providence, RI, 2001.
%
%\bibitem{Maw} 
%J. Mawhin, {\it Nonlinear boundary value problems involving the extrinsic mean curvature operator}, Math. Bohem. {\bf 139} (2014), 299--313.
%
%\bibitem{M}
%D. Mugnai, {\it Coupled Klein-Gordon and Born-Infeld type equations: looking for solitary waves}, 
%Proc. R. Soc. Lond. Ser. A Math. Phys. Eng. Sci, {\bf 460} (2004), 1519--1527.A symmetry problem in potential theory.

%
%%
%%\bibitem{palais}
%%R.S. Palais, {\it The principle of symmetric criticality}, Comm. Math. Phys. {\bf 69} (1979), 19--30.
%
%\bibitem{P}
%M.H.L. Pryce, {\it On a Uniqueness Theorem}, Math. Proc. Cambridge Philos. Soc. {\bf 31} (1935), 625--628.
%
%\bibitem{ST}
%E. Serra, P. Tilli, {\it Monotonicity constraints and supercritical Neumann problems}, Ann. Inst. H. Poincar\'e Anal. Non Lin\'eaire {\bf 28} (2011), 63--74.
%
%\bibitem{S}
%A. Szulkin, {\it Minimax principles for lower semicontinuous functions and applications to nonlinear boundary value problems}, Ann. Inst. H. Poincar\'e Anal. Non Lin\'eaire {\bf 3} (1986), 77--109.
%
%\bibitem{Trud}
%N.S. Trudinger, {\it Linear elliptic operators with measurable coefficients}, Ann. Scuola Norm. Sup. Pisa (3) {\bf 27} (1973), 265--308. 
%
%\bibitem{yu}
%Y. Yu, {\it Solitary waves for nonlinear Klein-Gordon equations coupled with Born-Infeld theory}, Ann. Inst. H. Poincar\'e Anal. Non Lin\'eaire {\bf 27} (2010), 351--376.
%
%\bibitem{Wang}
%F. Wang, {\it Solitary waves for the coupled nonlinear Klein-Gordon and Born-Infeld type equations}, Electron. J. Differential Equations  2012, no. 82, 12 pp.

\bibitem{schroedinger} E. Schr\"odinger, {\it A new exact solution in non-linear optics. (Two-wave-system),}
Proc. Roy. Irish Acad. Sect. A. {\bf 49}, (1943), 59--66.

\bibitem{serre}
{D. Serre, {\it Hyperbolicity of the Nonlinear Models of Maxwell's Equations}, Arch. Rational Mech. Anal. {\bf 172}  (2004),  309--331.}

\bibitem{Serrin71} 
J. Serrin, {\it A symmetry problem in potential theory,} Arch. Rational Mech. Anal. {\bf 43} (1971), 304--318.

\bibitem{speck}
J. Speck, {\it The nonlinear stability of the trivial solution to the Maxwell-Born-Infeld system,} J. Math. Phys. {\bf 53} (2012), 083703, 83 pp.

\bibitem{T82} A. Treibergs, {\it Entire Spacelike Hypersurfaces of Constant Mean Curvature in Minkowski Space}, Invent. Math {\bf 66} (1982), 39--56.

\bibitem{W}
J. Wermer, {\it Potential theory}, Second edition, Lecture Notes in Mathematics {\bf 408}, Springer Verlag, Berlin, 1981.

\bibitem{yang} Y. Yang, {\it Classical solutions in the Born-Infeld theory,} R. Soc. Lond. Proc. Ser. A Math. Phys. Eng. Sci. {\bf 456} (2000), 615--640.

\end{thebibliography}
\end{document}